\newtheorem{Theorem}{Theorem}[section]
\newtheorem{Corollary}[Theorem]{Corollary}
\newtheorem{Lemma}[Theorem]{Lemma}
\newtheorem{Claim}[Theorem]{Claim}
\newtheorem{Proposition}[Theorem]{Proposition}
\newtheorem*{Theorem*}{Theorem~\ref{th_criteria}}
\newtheorem*{Theorem**}{Theorem}
 { \theoremstyle{definition}
\newtheorem{Definition}[Theorem]{Definition}
\newtheorem{Example}[Theorem]{Example}
\newtheorem{Remark}[Theorem]{Remark} }
\numberwithin{equation}{section}
\newcommand{\Cc}{\mathscr{C}}
\newcommand{\Lc}{\mathscr{L}}
\begin{document}

\allowdisplaybreaks

\newcommand{\arXivNumber}{1603.07915}

\renewcommand{\PaperNumber}{086}

\FirstPageHeading

\ShortArticleName{Parallelisms \& Lie Connections}

\ArticleName{Parallelisms \& Lie Connections}

\Author{David BL\'AZQUEZ-SANZ~$^\dag$ and Guy CASALE~$^\ddag$}

\AuthorNameForHeading{D.~Bl\'azquez-Sanz and G.~Casale}

\Address{$^\dag$~Universidad Nacional de Colombia, Sede Medell\'in, Facultad de Ciencias, \\
\hphantom{$^\dag$}~Escuela de Matem\'aticas, Calle 59A No. 63 - 20, Medell\'in, Antioquia, Colombia}
\EmailD{\href{mailto:dblazquezs@unal.edu.co}{dblazquezs@unal.edu.co}}

\Address{$^\ddag$~IRMAR, Universit\'e de Rennes 1, Campus de Beaulieu, b\^at.~22-23,\\
\hphantom{$^\ddag$}~263 avenue du G\'en\'eral Leclerc, CS 74205, 35042 RENNES Cedex, France}
\EmailD{\href{mailto:guy.casale@univ-rennes1.fr}{guy.casale@univ-rennes1.fr}}

\ArticleDates{Received September 16, 2016, in f\/inal form October 25, 2017; Published online November 04, 2017}

\Abstract{The aim of this article is to study rational parallelisms of algebraic varieties by means of the transcendence of their symmetries. The nature of this transcendence is measured by a Galois group built from the Picard--Vessiot theory of principal connections.}

\Keywords{parallelism; isogeny; $G$-structure; linear connection; principal connection; dif\/ferential Galois theory}

\Classification{53C05; 14L40; 14E05; 12H05}


\section{Introduction}

The aim of this article is to study rational parallelisms of algebraic varieties by means of the transcendence of their symmetries. Our original motivation was to understand the possible obstructions to the third Lie theorem for algebraic Lie pseudogroups. This article is concerned with the simply transitive case. These obstructions should appear in the Galois group of certain connection associated to a Lie algebroid. However, we have written the article in the language of regular and rational parallelisms of algebraic varieties and their symmetries.

A theorem of P.~Deligne says that any Lie algebra can be realized as a~parallelism of an algebraic variety. This is a sort of algebraic version of the third Lie theorem. Notwithstanding, there is one main problem: given an algebraic variety with a parallelism, how far is it from being an algebraic group? Is it possible to conjugate this parallelism with the canonical parallelism of invariant vector f\/ields on an algebraic group?

In the analytic context, from the Darboux--Cartan theorem \cite[p.~212]{sharpe}, a $\mathfrak{g}$-parallelized complex manifold $M$ has a natural $(G,G)$ structure where $G$ is a Lie group with $\mathfrak{lie}(G) = \mathfrak{g}$. The obstruction to be a covering of~$G$, as manifold with a~$(G,G)$ structure, is contained in a mo\-no\-dromy group \cite[p.~130]{sharpe}. In~\cite{Wang}, Wang proved that parallelized compact complex manifolds are biholomorphic to quotients of complex Lie groups by discrete cocompact subgroups. This result has been extended by Winkelmann in \cite{Winkelmann1, Winkelmann2} for some open complex manifolds.

In this article we address the problem of classif\/ication of rational parallelisms on algebraic varieties up to birational transformations. Such a classif\/ication seems impossible in the algebraic category but we prove a criterion to ensure that a parallelized algebraic variety is isogenous to an algebraic group. Summarizing, we pursue the following plan: We regard inf\/initesimal symmetries of a rational parallelism as horizontal sections of a connection that we call the reciprocal Lie connection. This connection has a Galois group which is represented as a group of internal automorphisms of a Lie algebra. The obstruction to the algebraic conjugation to an algebraic group, under some assumptions, appear in the Lie algebra of this Galois group.

In Section \ref{section_parallelisms} we introduce the basic def\/initions; several examples of parallelisms are given here. In Section \ref{section_lie} we study the properties of connections on the tangent bundle whose local analytic horizontal sections form a sheaf of Lie algebras of vector f\/ields. We call them {\it Lie connections}. They always come by pairs, and they are characterized by having vanishing curvature and constant torsion (Proposition~\ref{prop_Lie_char}). We see that each rational parallelism has an attached pair of Lie connections, one of them with trivial Galois group. We compute the Galois groups of some parallelisms given in examples (Proposition \ref{prop_example}), and prove that any algebraic subgroup of ${\rm PSL}_2(\mathbf C)$ appears as the dif\/ferential Galois group of a $\mathfrak{sl}_2(\mathbf C)$-parallelism (Theorem~\ref{thm_SL2}). Section~\ref{section_DC} is devoted to the construction of the isogeny between a $\mathfrak g$-parallelized variety and an algebraic group $G$ whose Lie algebra is~$\mathfrak g$. In order to do this, we introduce the Darboux--Cartan connection, a $G$-connection whose horizontal sections are parallelism conjugations. We prove that if $\mathfrak g$ is centerless then the Darboux--Cartan connection and the reciprocal Lie connection have isogenous Galois groups. We prove that the only centerless Lie algebras $\mathfrak{g}$ such that there exists a $\mathfrak{g}$-parallelism with a trivial Galois group are algebraic Lie algebras, i.e., Lie algebras of algebraic groups. In particular this allows us to give a criterion for a parallelized variety to be isogenous to an algebraic group. The vanishing of the Lie algebra of the Galois group of the reciprocal connection is a necessary and suf\/f\/icient condition for a parallelized variety to be isogenous to an algebraic group:

\begin{Theorem*}
Let $\mathfrak g$ be a centerless Lie algebra. An algebraic variety $(M,\omega)$ with a rational parallelism of type $\mathfrak g$ is isogenous to an algebraic group if and only if $\mathfrak{gal}(\nabla^{\rm rec}) = \{0\}$.
\end{Theorem*}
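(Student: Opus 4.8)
The plan is to prove both implications, using the structural results announced in the introduction. The forward direction should be essentially formal: if $(M,\omega)$ is isogenous to an algebraic group $G$ with $\mathfrak{lie}(G)=\mathfrak g$, then after pulling back along the isogeny the parallelism becomes the canonical left-invariant parallelism on $G$ (or an \'etale cover thereof). For the canonical parallelism the infinitesimal symmetries are exactly the right-invariant vector fields, which are rational — indeed regular — sections defined over the base field, so the reciprocal Lie connection $\nabla^{\rm rec}$ has a full set of rational horizontal sections and hence trivial Galois group. One must check that the Galois group of $\nabla^{\rm rec}$ is an isogeny invariant, or at least does not grow under the isogeny; this follows because an isogeny induces a finite extension of the function field, and the Picard--Vessiot theory behaves functorially under such base change, so $\mathfrak{gal}$ of the connection on $M$ injects into (in fact equals, up to the identity component) that of its pullback, which is trivial.

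For the converse — the substantive direction — I would argue as follows. Assume $\mathfrak g$ is centerless and $\mathfrak{gal}(\nabla^{\rm rec})=\{0\}$. The announced Section~\ref{section_DC} results say that for centerless $\mathfrak g$ the Galois group of the Darboux--Cartan connection $\nabla^{\rm DC}$ is isogenous to $\mathfrak{gal}(\nabla^{\rm rec})$; hence $\mathfrak{gal}(\nabla^{\rm DC})$ is finite. A connection with finite Galois group is trivialized after a finite \'etale base change: there is a finite \'etale cover $\widetilde M\to M$ on which $\nabla^{\rm DC}$ acquires a full set of rational horizontal sections. By construction of the Darboux--Cartan connection, a rational horizontal section is precisely a (rational, hence by homogeneity regular) conjugation of the parallelism $\omega$ on $\widetilde M$ with the canonical parallelism of the algebraic group $G$ with $\mathfrak{lie}(G)=\mathfrak g$ — where such a $G$ exists by the stated theorem that a centerless Lie algebra admitting a parallelism with trivial Galois group is algebraic. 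This conjugation is a dominant rational map $\widetilde M \dashrightarrow G$ which is \'etale (it carries one parallelism to another, so its differential is an isomorphism at every point of its domain); thus it is generically finite, and composing $\widetilde M\to M$ with it exhibits $M$ as isogenous to $G$.

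The main obstacle I anticipate is the passage from ``finite Galois group'' to an actual \emph{isogeny} of varieties with all the categorical properties required by the definition of isogeny used in the paper (a diagram of dominant generically-finite, or \'etale, rational maps). Several points need care here: first, that the rational conjugation produced is genuinely \'etale and not merely dominant — this rests on the parallelism being preserved and the target $G$ being homogeneous, so that regularity at one point propagates; second, that one may choose the finite \'etale cover $\widetilde M$ so that it too carries a rational parallelism of type $\mathfrak g$ (the pullback of $\omega$) compatible with the construction, so that the existence theorem for the algebraic group $G$ applies on $\widetilde M$; and third, the bookkeeping needed to see that $\mathfrak{gal}(\nabla^{\rm rec})=\{0\}$ on $M$ — rather than merely finite — does not cause a mismatch with $\mathfrak{gal}(\nabla^{\rm DC})$ being only finite, i.e., that the isogeny of Galois groups from Section~\ref{section_DC} together with triviality of one side really does force the other to be finite and that finiteness is all that is needed downstream. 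I would also want to double-check that the centerless hypothesis is genuinely used exactly where the isogeny between the two Galois groups is invoked, and is not hiding an additional obstruction in the converse direction.
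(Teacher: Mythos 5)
Your proposal is correct and follows essentially the same route as the paper, which deduces the theorem directly from Corollary~\ref{cor:rec} (the identification $\mathfrak{gal}(\nabla^{\rm rec})\cong\mathfrak{gal}(\Theta)$ for centerless $\mathfrak g$ together with the characterization of isogeny by $\mathfrak{gal}(\Theta)=\{0\}$ from the remarks after Definition~\ref{DCconnection}) and from Lemma~\ref{th:algebraic}, which you correctly identify as the key non-formal ingredient supplying the algebraic group $G$ with $\mathfrak{lie}(G)=\mathfrak g$ in the converse direction. The points you flag as needing care (finiteness versus triviality of $\operatorname{Gal}(\Theta)$, and where centerlessness enters) are exactly those handled by parts (b)--(d) of Corollary~\ref{cor:rec}, so there is no gap.
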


The notion of {\it isogeny} can be extended beyond the simply-transitive case. Let us consider a~complex Lie algebra $\mathfrak g$. An {\it infinitesimally homogeneous variety} of type $\mathfrak g$ is a pair $(M,\mathfrak s)$ consisting of a complex smooth irreducible variety $M$ and a f\/inite-dimensional Lie algebra \smash{$\mathfrak s \subset \mathfrak X(M)$} isomorphic to $\mathfrak g$ that spans the tangent bundle of $M$ on the generic point.

We are interested in conjugation by rational or by algebraic maps, so that, whenever necessary, we replace $M$ by a suitable Zariski open subset. In this context, we say that a dominant rational map $f\colon M_1 \dasharrow M_2$ between varieties of the same dimension conjugates the inf\/initesimally homogeneous varieties $(M_1,\mathfrak s_1)$ and $(M_2,\mathfrak s_2)$ if $f^*(\mathfrak s_2) = \mathfrak s_1$. We say that $(M_1,\mathfrak s_1)$ and $(M_2,\mathfrak s_2)$ are {\it isogenous} if they are conjugated to the same inf\/initesimally homogeneous space of type~$\mathfrak g$.

Under some hypothesis on the Lie algebra $\mathfrak s\subset \mathfrak X(M)$ one can prove that $(M,\mathfrak s)$ is isogenous to a homogeneous space $(G/H,\mathfrak{lie}(G)^{\rm rec})$ with the action of right invariant vector f\/ields. These hypothesis are satisf\/ied by transitive actions of $\mathfrak{sl}_{n+1}(\mathbf C)$ on $n$-dimensional varieties. As a particular case of Theorem~\ref{homogeneous} one has
\begin{Theorem**}
Let $(M,\mathfrak s)$ be an infinitesimally homogeneous variety of complex dimension $n$ such that $\mathfrak s$ is isomorphic to $\mathfrak{sl}_{n+1}(\mathbf C)$. Then there exists a dominant rational map $M \dasharrow \mathbf{CP}_n$ conjugating $\mathfrak s$ with the Lie algebra $\mathfrak{sl}_{n+1}(\mathbf C)$ of projective vector fields in $\mathbf{CP}_n$.
\end{Theorem**}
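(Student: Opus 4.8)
The plan is to construct the required map directly from the isotropy data, so that no monodromy or Picard--Vessiot argument is needed. Set $\mathfrak g=\mathfrak{sl}_{n+1}(\mathbf C)$ and fix a Lie algebra isomorphism $\iota\colon\mathfrak s\to\mathfrak g$. Since $(M,\mathfrak s)$ is infinitesimally homogeneous, over a dense Zariski-open subset $U\subseteq M$ the evaluation map $\mathrm{ev}_m\colon\mathfrak s\to T_mM$ is surjective, so its kernel $\mathfrak h_m=\{X\in\mathfrak s:X|_m=0\}$ is a subalgebra of $\mathfrak s$ of codimension $n=\dim M$, and $m\mapsto\mathfrak h_m$ is a morphism from $U$ to the Grassmannian $\mathrm{Gr}_{n(n+1)}(\mathfrak s)$, since the kernel of a family of surjections of constant rank is an algebraic subbundle. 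Transporting by $\iota$, I get a morphism $U\to\mathrm{Gr}_{n(n+1)}(\mathfrak g)$, $m\mapsto\iota(\mathfrak h_m)$.

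The next step is to identify the image. The crux is the Lie-algebraic statement that every codimension-$n$ subalgebra of $\mathfrak{sl}_{n+1}(\mathbf C)$ is conjugate, under some automorphism of $\mathfrak{sl}_{n+1}(\mathbf C)$, to the maximal parabolic $\mathfrak p=\{X:Xv_0\in\mathbf C v_0\}$ stabilising a line $\mathbf C v_0\subset\mathbf C^{n+1}$; equivalently, every infinitesimally transitive action of $\mathfrak{sl}_{n+1}$ on an $n$-dimensional variety is locally the standard projective action on $\mathbf{CP}_n$. To prove this I would embed $\mathfrak h_m$ in a maximal subalgebra $\mathfrak m$ of $\mathfrak g$: if $\mathbf C^{n+1}$ is $\mathfrak m$-reducible then $\mathfrak m$ equals the stabiliser $\mathfrak p_W$ of a proper invariant subspace $W$, which has codimension $(\dim W)(n+1-\dim W)\ge n$, with equality only when $\dim W\in\{1,n\}$, forcing $\mathfrak h_m=\mathfrak m=\mathfrak p_1$ or $\mathfrak p_n$ (and $\mathfrak p_n=\theta(\mathfrak p_1)$ for the automorphism $\theta(X)=-X^{\mathsf t}$); if $\mathbf C^{n+1}$ is $\mathfrak m$-irreducible then $\mathfrak m$ is reductive, and the Dynkin--Morozov classification of irreducible reductive subalgebras of $\mathfrak{sl}_{n+1}$ shows they all have codimension strictly larger than $n$, so this case does not occur. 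This classification step is the one I expect to be the main obstacle; it is precisely where the distinguished role of $\mathfrak{sl}_{n+1}$ acting on a space of minimal dimension enters, and it is the concrete form, in this case, of the hypothesis needed in Theorem~\ref{homogeneous}. Because $M$ is irreducible and the locus of subalgebras $\mathrm{Aut}$-conjugate to $\mathfrak p_1$ is a disjoint union of two closed orbits (those of $\mathfrak p_1$ and $\mathfrak p_n$), the $\mathrm{Aut}(\mathfrak g)$-conjugacy class of $\iota(\mathfrak h_m)$ is independent of $m\in U$; after replacing $\iota$ by $\mathrm{Ad}(g)\circ\iota$ for a suitable $g$ I may assume it is the class of $\mathfrak p_1$. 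As $\mathfrak p_1$ is self-normalising, $[v]\mapsto\{X:Xv\in\mathbf C v\}$ is a closed embedding of $\mathbf{CP}_n$ onto the $\mathrm{PSL}_{n+1}(\mathbf C)$-orbit of $\mathfrak p_1$ in $\mathrm{Gr}_{n(n+1)}(\mathfrak g)$; composing, I obtain a rational map $\phi\colon M\dasharrow\mathbf{CP}_n$ determined by the requirement that $\phi(m)$ be the point whose isotropy subalgebra is $\iota(\mathfrak h_m)$.

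It remains to check that $\phi$ conjugates $\mathfrak s$ with the Lie algebra $\mathfrak g^{\mathbf{CP}_n}$ of projective vector fields and that it is dominant. For $X\in\mathfrak s$ with local flow $\varphi^X_t$, pushing forward vector fields that vanish at $m$ gives $\mathfrak h_{\varphi^X_t(m)}=(\varphi^X_t)_*\mathfrak h_m$, and on $\mathfrak s$ one has $(\varphi^X_t)_*|_{\mathfrak s}=\mathrm{Ad}(\exp(\mp tX))$ because $\mathcal L_X$ acts on $\mathfrak s$ as $\mathrm{ad}_X$; on $\mathbf{CP}_n$ the isotropy subalgebra of $g\cdot y$ is $\mathrm{Ad}(g)$ applied to that of $y$. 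Hence $\phi$ intertwines the flow of $X$ with a one-parameter subgroup of $\mathrm{PSL}_{n+1}(\mathbf C)$ acting on $\mathbf{CP}_n$, so $\phi_*$ maps $\mathfrak s$ into $\mathfrak g^{\mathbf{CP}_n}$; as $X$ runs over $\mathfrak s$ these images span $\mathfrak g^{\mathbf{CP}_n}$, whence $\phi^*(\mathfrak g^{\mathbf{CP}_n})=\mathfrak s$ (the sign is harmless, being absorbed in the identification $\mathfrak g\cong\mathfrak g^{\mathrm{op}}$). Finally, the image of $\phi$ is stable under every subgroup $\exp(tX)$, hence under the connected group they generate, which is all of $\mathrm{PSL}_{n+1}(\mathbf C)$, and $\mathbf{CP}_n$ is a single orbit; therefore $\phi$ is dominant. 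The remaining points --- algebraicity of the kernel subbundle, closedness of the orbit of $\mathfrak p_1$, generic smoothness of $\phi$ --- are routine. I note that $\phi$ is visibly single-valued and algebraic from the outset, so the differential-Galois machinery developed earlier governs only the degree of $\phi$, not its existence.
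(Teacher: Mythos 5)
Your proof is correct in outline, but it is a genuinely different argument from the one in the paper. The paper deduces this statement from the general Theorem~\ref{homogeneous}: it forms the sub-pro-variety $V\subset M^{[n]}$ of formal parametrizations $r$ with $r^*\mathfrak s=r_o^*\mathfrak s$, shows that $V$ carries two commuting parallelisms of type $\mathfrak g$ (the canonical Cartan-form parallelism and the prolonged fields $Y^{[n]}$), invokes Corollary~\ref{th:pair} --- hence the Darboux--Cartan connection and the Picard--Vessiot machinery behind Theorem~\ref{th_criteria} --- to obtain an isogeny $V\to G$, and then descends through the reparametrization group $K$ and the corresponding $H\subset G$ to an isogeny $M=V/K \to G/H$, which becomes a genuine dominant rational map because the parabolic is self-normalizing; the hypothesis that makes this run for $\mathfrak{sl}_{n+1}$ is Cartan's normality result. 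You instead build the developing map directly as the isotropy map $m\mapsto\mathfrak h_m$ into the Grassmannian, identify its image with the closed orbit of maximal parabolics $\cong\mathbf{CP}_n$, and check equivariance by hand. What your route buys is economy: no jet spaces, no differential Galois theory, and the map is manifestly single-valued and algebraic from the start. What it costs is generality and a heavier Lie-theoretic input: the whole argument hinges on the classification of codimension-$n$ subalgebras of $\mathfrak{sl}_{n+1}(\mathbf C)$ (your reducible case is complete, but the irreducible case is delegated to the Dynkin--Morozov classification, and this is indeed the load-bearing step), and it is special to situations where the isotropy subalgebra has a unique conjugacy class of self-normalising models --- exactly what fails in the general setting of Theorem~\ref{homogeneous}, where the Galois-theoretic construction is what substitutes for the isotropy map. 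Your closing remark that the Galois machinery ``governs only the degree of $\phi$'' is fair for this particular statement but should not be read as applying to the general theorem. Two small points to tighten: the dominance of $\phi$ follows most cleanly from the surjectivity of ${\rm d}\phi_m$ (which your equivariance computation already gives), rather than from stability of the image under the flows; and the sign in $\phi_*X=-\iota(X)^{\mathbf{CP}_n}$ is harmless simply because the subalgebra of projective vector fields is stable under $Z\mapsto -Z$ as a linear subspace, which is all that the paper's definition of conjugation ($f^*(\mathfrak s_2)=\mathfrak s_1$) requires.
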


Appendix~\ref{ApA} is devoted to a geometrical presentation of Picard--Vessiot theory for linear and principal connections. Finally, Appendix~\ref{apB} contains a detailed proof of Deligne's theorem of the realization of a regular parallelism modeled over any f\/inite-dimensional Lie algebra. This includes also a computation of the Galois group that turns out to be, for this particular construction, an algebraic torus.

\section{Parallelisms}\label{section_parallelisms}

Let $M$ be a smooth connected af\/f\/ine variety over $\mathbf C$ of dimension~$r$. We denote by $\mathbf C[M]$ its ring of regular functions and by $\mathbf C(M)$ its f\/ield of rational functions. Analogously, we denote by~$\mathfrak X[M]$ and~$\mathfrak X(M)$ respectively the Lie algebras of regular and rational vector f\/ields in $M$, and so on.

Let $\mathfrak g$ be a Lie algebra of dimension $r$. We f\/ix a basis $A_1,\ldots,A_r$ of $\mathfrak g$, and the following notation for the associated structure constants $[A_i,A_j] = \sum_{k}\lambda_{ij}^kA_k$.

A parallelism of type $\mathfrak g$ of $M$ is a realization of the Lie algebra $\mathfrak g$ as a Lie algebra of pointwise linearly independent vector f\/ields in $M$. More precisely:

\begin{Definition}A regular parallelism of type $\mathfrak g$ in $M$ is a Lie algebra morphism, $\rho\colon \mathfrak g \to \mathfrak X[M]$ such that $\rho A_1(x), \ldots, \rho A_r(x)$ form a basis of $T_xM$ for any point $x$ of $M$.
\end{Definition}

\begin{Example}\label{ex:AGP}Let $G$ be an algebraic group and $\mathfrak g$ be its Lie algebra of left invariant vector f\/ields. Then the natural inclusion $\mathfrak g\subset \mathfrak X[G]$ is a regular parallelism of~$G$. The Lie algebra $\mathfrak g^{\rm rec}$ of right invariant vector f\/ields is another regular parallelism of the same type. Let invariant and right invariant vector f\/ields commute, hence, an algebraic group is naturally endowed with a~pair of commuting parallelisms of the same type.
\end{Example}

From Example \ref{ex:AGP}, it is clear that any \emph{algebraic} Lie algebra is realized as a parallelism of some algebraic variety. On the other hand, Theorem~\ref{TDeligne} due to P.~Deligne and published in~\cite{Malgrange}, ensures that any Lie algebra is realized as a regular parallelism of an algebraic variety. Analogously, we have the def\/initions of rational and local analytic parallelism. Note that a~rational parallelism in~$M$ is a regular parallelism in a Zariski open subset $M^\star \subseteq M$.

There is dual def\/inition, equivalent to that of parallelism. This is more suitable for calculations.

\begin{Definition}A regular parallelism form (or coparallelism) of type $\mathfrak g$ in $M$ is a $\mathfrak g$-valued
$1$-form $\omega\in\Omega^1[M]\otimes_{\mathbb C} \mathfrak g$ such that:
\begin{itemize}\itemsep=0pt
\item[(1)]For any $x\in M$, $\omega_x\colon T_x M \to \mathfrak g$ is a linear isomorphism.
\item[(2)]If $A$ and $B$ are in $\mathfrak g$ and $X$, $Y$ are vector f\/ields such that $\omega(X) = A$ and $\omega (Y) = B$ then $\omega[X,Y] = [A,B]$.
\end{itemize}
\end{Definition}

Analogously, we def\/ine local analytic and rational coparallelism of type $\mathfrak g$ in $M$. It is clear that each coparallelism induces a parallelism, and reciprocally, by the relation $\omega (\rho (A)) = A$. Thus, there is a natural equivalence between the notions of parallelism and coparallelism. From now on we f\/ix $\rho$ and $\omega$ equivalent parallelism and coparallelism of type $\mathfrak g$ on $M$.

The Lie algebra structure of $\mathfrak g$ forces $\omega$ to satisfy Maurer--Cartan structure equations
\begin{gather*}{\rm d}\omega + \frac{1}{2}[\omega,\omega] = 0.\end{gather*}
Taking components $\omega = \sum_{i}\omega_iA_i$ we have
\begin{gather*}{\rm d}\omega_i +\sum_{j,k=1}^r \frac{1}{2}\lambda_{jk}^i\omega_j\wedge \omega_k = 0.\end{gather*}

\begin{Example}\label{ex:group} Let $G$ be an algebraic group and $\mathfrak g$ be the Lie algebra of left invariant vector f\/ields in $G$. Then the structure form $\omega$ is the coparallelism corresponding to the parallelism of Example~\ref{ex:AGP}.
\end{Example}

\begin{Example}\label{ex:B}Let $\mathfrak g = \langle A_1,A_2\rangle$ be the 2-dimensional Lie algebra with commutation relation
\begin{gather*}[A_1, A_2]= A_1.\end{gather*}
The vector f\/ields
\begin{gather*}X_1 = \frac{\partial}{\partial x},\qquad X_2 = x\frac{\partial}{\partial x}+ \frac{\partial}{\partial y},\end{gather*}
def\/ine a regular parallelism via $\rho (A_i)= X_i$ of $\mathbf C^2$. The associated parallelism form is
\begin{gather*}\omega = A_1{\rm d}x + (A_2 - x A_1){\rm d}y.\end{gather*}
\end{Example}

\begin{Example}[Malgrange]\label{ex:MD} Let $\mathfrak g = \langle A_1,A_2,A_3 \rangle$ be the 3-dimensional Lie algebra with commutation relations
\begin{gather*}[A_1, A_2]= \alpha A_2,\qquad [A_1,A_3]=\beta A_3, \qquad [A_2,A_3] = 0,\end{gather*}
with $\alpha$, $\beta$, non zero complex numbers. In particular, if $\alpha/\beta$ is not rational then $\mathfrak g$ is not the Lie algebra of an algebraic group. The vector f\/ields
\begin{gather*}X_1 = \frac{\partial}{\partial x} + \alpha y \frac{\partial}{\partial y} + \beta z\frac{\partial}{\partial z},\qquad
X_2 = \frac{\partial}{\partial y},\qquad X_3 = \frac{\partial}{\partial z},\end{gather*}
def\/ine a regular parallelism via $\rho (A_i)= X_i$ of $\mathbf C^3$. The associated parallelism form is
\begin{gather*}\omega = (A_1 - A_2\alpha y - A_3\beta z){\rm d}x + A_2{\rm d}y + A_3{\rm d}z.\end{gather*}
\end{Example}

\begin{Definition} \label{isogenous} Let $(M,\omega)$ and $(N,\theta)$ be algebraic manifolds with coparallelisms of type~$\mathfrak g$. We say that they are isogenous if there is an algebraic manifold $(P,\eta)$ with a coparallelism of type~$\mathfrak g$ and dominant maps $f\colon P\to M$ and $g\colon P \to N$ such that $f^*(\omega) = g^*(\theta) = \eta$.
\end{Definition}

Clearly, the notion of isogeny of parallelized varieties extends that of isogeny of algebraic groups.

\begin{Example}\label{ex:cover} Let $f\colon M\dasharrow G$ be a dominant rational map with values in an algebraic group with $\dim_{\mathbf{C}}M =\dim_{\mathbf{C}}G$. Then $\theta =f^*(\omega)$ is a rational parallelism form in $M$.
\end{Example}

\begin{Example}\label{ex:finite}Let $H$ be a f\/inite subgroup of the algebraic group $G$ and
\begin{gather*}\pi\colon \ G\to M = H\setminus G = \{Hg \colon g\in G\}\end{gather*} be the quotient by the action of $H$ on the left side. The structure form $\omega$ in $G$ is left-invariant and then it is projectable by $\pi$. Then, $\theta = \pi_*(\omega)$ is a regular parallelism form in~$M$.
\end{Example}

\begin{Example}\label{ex:coverfinite} Combining Examples \ref{ex:cover} and \ref{ex:finite}, let $H\subset G$ be a f\/inite subgroup and $f\colon M\to H \setminus G$ be a dominant rational map between manifolds of the same dimension. Then $\theta = f^*(\pi_*(\omega))$ is a rational parallelism form in $M$.
\end{Example}

\begin{Example}By application of Example \ref{ex:coverfinite} to the case of the multiplicative group we obtain rational multiples of logarithmic forms in $\mathbf{CP}_1$, $\frac{p}{q}\frac{{\rm d}f}{f}$ where $f\in\mathbf C(z)$. Thus, rational multiples of logarithmic forms in $\mathbf{CP}_1$ are the rational coparallelisms isogenous to that of the multiplicative group.
\end{Example}

\begin{Example}By application of Example \ref{ex:coverfinite} to the case of the additive group we obtain the exact forms in $\mathbf{CP}_1$, ${\rm d}F$ where $F\in\mathbf C(z)$. Thus, the exact forms in $\mathbf{CP}_1$ are the rational coparallelisms isogenous to that of the additive group.
\end{Example}

\begin{Example}\label{ex:quotient_c}Let $H$ be a subgroup of the algebraic group $G$, with Lie algebra $\mathfrak h\subset \mathfrak g$. Let us
assume that $\mathfrak h$ admits a supplementary Lie algebra $\mathfrak h'$
\begin{gather*}\mathfrak g = \mathfrak h \oplus \mathfrak h' \qquad \mbox{(as vector spaces).}\end{gather*}
We consider the left quotient $M= H \setminus G$ of $G$ by the action of $H$ and the quotient map $\pi\colon G\to M$. It turns out that $\mathfrak h'$ is a Lie algebra of vector f\/ields in $G$ projectable by $\pi$, and thus $\pi_*|_{\mathfrak h'} \colon \mathfrak h' \to \mathfrak X[M]$ gives a parallelism of $M$ that is regular in the open subset
\begin{gather*}\{Hg\in M \colon \operatorname{Adj}_g(\mathfrak h) \cap \mathfrak h' = \{0\} \}.\end{gather*}
It turns out to be regular in $M$ if $H\lhd G$. Examples~\ref{ex:B} and~\ref{ex:MD} are particular cases where $G$ is $\operatorname{Af\/f}(2,\mathbf C)$ and $\operatorname{Af\/f}(3,\mathbf C)$ respectively.
\end{Example}

\begin{Remark}We can see also Example~\ref{ex:quotient_c} as a coparallelism. Let $\pi'\colon\mathfrak g\to \mathfrak h'$ be the projection given by the vector space decomposition $\mathfrak g = \mathfrak h \oplus \mathfrak h'$. Since $\pi'\circ \omega$ is left invariant form in $G$, it is projectable by~$\pi$. Hence, there is a form $\omega'$ in $M$ such that $\pi^*\omega' = \pi'\circ \omega$. This form $\omega'$ is the corresponding coparallelism.
\end{Remark}

\section{Associated Lie connection}\label{section_lie}

\subsection{Reciprocal connections}

Let $\nabla$ be a linear connection (rational or regular) on $TM$. The reciprocal connection is def\/ined as
\begin{gather*}\nabla^{\rm rec}_{\vec X}\vec Y = \nabla_{\vec Y} \vec X + \big[\vec X,\vec Y\big].\end{gather*}
From this def\/inition it is clear that the dif\/ference $\nabla - \nabla^{\rm rec} = \operatorname{Tor}_{\nabla}$ is the torsion tensor, $\operatorname{Tor}_{\nabla} = -\operatorname{Tor}_{\nabla^{\rm rec}}$ and $(\nabla^{\rm rec})^{\rm rec} = \nabla$.

\subsection{Connections and parallelisms}

Let $\omega$ be a coparallelism of type $\mathfrak g$ in $M$ and $\rho$ its equivalent parallelism. Denote by $\vec X_i$ the basis of vector f\/ields in $M$ such that $\omega(\vec X_i)=A_i$ is a basis of $\mathfrak g$.

\begin{Definition}The connection $\nabla$ associated to the parallelism $\omega$ is the only linear connection in $M$ for which $\omega$ is a $\nabla$-horizontal form.
\end{Definition}

Clearly $\nabla$ is a f\/lat connection and the basis $\{\vec X_i\}$ is a basis of the space of $\nabla$-horizontal vector f\/ields. In this basis $\nabla$ has
vanishing Christof\/fel symbols
\begin{gather*}\nabla_{\vec X_i} \vec X_j = 0.\end{gather*}

Let us compute some inf\/initesimal symmetries of $\omega$. A vector f\/ield $\vec Y$ is an inf\/initesimal symmetry of $\omega$ if ${\rm Lie}_{\vec Y}\omega = 0$, or equivalently, if it commutes with all the vector f\/ields of the parallelism
\begin{gather*}\big[\vec X_i, \vec Y\big] = 0, \qquad i = 1,\ldots, r.\end{gather*}

\begin{Lemma}\label{Lemma1}Let $\nabla$ be the connection associated to the parallelism $\omega$. Then for any vector field~$\vec Y$ and any $j=1,\ldots,r$
\begin{gather*}\big[\vec X_j, \vec Y\big] = \nabla^{\rm rec}_{\vec X_j}{\vec Y}.\end{gather*}
Thus, $\vec Y$ is an infinitesimal symmetry of $\omega$ if and only if it is a~horizontal vector field for the reciprocal connection $\nabla^{\rm rec}$.
\end{Lemma}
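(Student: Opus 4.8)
The plan is to compute $[\vec X_j, \vec Y]$ directly using the definition of the reciprocal connection and the fact that $\nabla$ has vanishing Christoffel symbols in the frame $\{\vec X_i\}$. Recall that by definition $\nabla^{\rm rec}_{\vec X_j}\vec Y = \nabla_{\vec Y}\vec X_j + [\vec X_j, \vec Y]$. So the identity to prove is equivalent to showing $\nabla_{\vec Y}\vec X_j = 0$ for every vector field $\vec Y$ and every $j$.

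First I would expand an arbitrary $\vec Y$ in the horizontal frame as $\vec Y = \sum_i f_i \vec X_i$ with $f_i \in \mathbf C(M)$ (or $\mathbf C[M]$ in the regular case). Then by $\mathbf C(M)$-linearity of $\nabla$ in its lower slot, $\nabla_{\vec Y}\vec X_j = \sum_i f_i \nabla_{\vec X_i}\vec X_j = 0$, since $\nabla_{\vec X_i}\vec X_j = 0$ by the computation of Christoffel symbols already recorded in the excerpt. Substituting into the definition of $\nabla^{\rm rec}$ gives $\nabla^{\rm rec}_{\vec X_j}\vec Y = [\vec X_j, \vec Y]$, which is the asserted formula.

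For the second assertion, combine this identity with the characterization of infinitesimal symmetries of $\omega$ stated just before the lemma: $\vec Y$ is an infinitesimal symmetry of $\omega$ if and only if $[\vec X_i, \vec Y] = 0$ for all $i = 1, \ldots, r$. By the formula just proved, this holds exactly when $\nabla^{\rm rec}_{\vec X_i}\vec Y = 0$ for all $i$. Since $\{\vec X_i\}$ spans $T_xM$ at every point (being a parallelism), $\nabla^{\rm rec}_{\vec X_i}\vec Y = 0$ for all $i$ is equivalent to $\nabla^{\rm rec}_{\vec Z}\vec Y = 0$ for every vector field $\vec Z$, i.e., $\vec Y$ is $\nabla^{\rm rec}$-horizontal.

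There is essentially no obstacle here: the only point requiring any care is that the lower argument of a connection is tensorial (function-linear), so that expanding $\vec Y$ in the frame and using the vanishing of the Christoffel symbols is legitimate; this is standard. One should also note in passing that the equivalence ``${\rm Lie}_{\vec Y}\omega = 0 \iff [\vec X_i,\vec Y]=0$ for all $i$'' used above follows because $\omega(\vec X_i) = A_i$ is constant, so $({\rm Lie}_{\vec Y}\omega)(\vec X_i) = \vec Y(\omega(\vec X_i)) - \omega([\vec Y,\vec X_i]) = \omega([\vec X_i,\vec Y])$, and $\omega$ is a pointwise isomorphism.
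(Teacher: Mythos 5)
Your argument is correct and is essentially the paper's own: both expand $\vec Y$ in the horizontal frame $\{\vec X_k\}$ and use the vanishing of the Christoffel symbols $\nabla_{\vec X_i}\vec X_j = 0$ to identify $\nabla^{\rm rec}_{\vec X_j}\vec Y$ with $[\vec X_j,\vec Y]$, the only cosmetic difference being that you isolate the term $\nabla_{\vec Y}\vec X_j$ in the defining formula for $\nabla^{\rm rec}$ and kill it by tensoriality, while the paper applies the Leibniz rule to $\nabla^{\rm rec}_{\vec X_j}$ directly. Your closing verification that ${\rm Lie}_{\vec Y}\omega = 0$ is equivalent to $[\vec X_i,\vec Y]=0$ for all $i$ is a welcome extra detail that the paper merely asserts.
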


\begin{proof}A direct computation yields the result. Take $\vec Y = \sum\limits_{k=1}^r f_k\vec X_k$, for each $j$ we have
\begin{gather*}\nabla^{\rm rec}_{\vec X_j} \vec Y = \sum_{k=1}^r\big( \big(\vec X_jf_k\big)\vec X_k + f_k\big[\vec X_j, \vec X_k\big]\big) = \big[\vec X_j,\vec Y\big]. \tag*{\qed}\end{gather*}\renewcommand{\qed}{}
\end{proof}

The above considerations also give us the Christof\/fel symbols for $\nabla^{\rm rec}$ in the basis $\{\vec X_i\}$
\begin{gather*}\nabla^{\rm rec}_{\vec X_i}\vec X_j = \big[\vec X_i, \vec X_j\big] = \sum_{k=1}^r \lambda_{ij}^k \vec X_k,\end{gather*}
i.e., the Christof\/fel symbols of $\nabla^{\rm rec}$ are the structure constants of the Lie algebra $\mathfrak g$.

\begin{Lemma}\label{Lemma2} Let $\nabla$ be the connection associated to a coparallelism in $M$. Then, $\nabla^{\rm rec}$ is flat, and the Lie bracket of two $\nabla^{\rm rec}$-horizontal vector fields is a $\nabla^{\rm rec}$-horizontal vector field.
\end{Lemma}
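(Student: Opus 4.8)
The plan is to prove the two assertions separately, exploiting that in the frame $\{\vec X_i\}$ the connection $\nabla^{\rm rec}$ has constant Christoffel symbols $\Gamma_{ij}^k = \lambda_{ij}^k$, the structure constants of $\mathfrak g$. First I would establish flatness by a direct computation of the curvature tensor $R(\vec X_i,\vec X_j)\vec X_k = \nabla^{\rm rec}_{\vec X_i}\nabla^{\rm rec}_{\vec X_j}\vec X_k - \nabla^{\rm rec}_{\vec X_j}\nabla^{\rm rec}_{\vec X_i}\vec X_k - \nabla^{\rm rec}_{[\vec X_i,\vec X_j]}\vec X_k$ in this frame. Since the $\vec X_i$ annihilate the (constant) coefficient functions, each term reduces to a double iterated bracket of basis vector fields, and the expression becomes $[\vec X_i,[\vec X_j,\vec X_k]] - [\vec X_j,[\vec X_i,\vec X_k]] - [[\vec X_i,\vec X_j],\vec X_k]$, which vanishes by the Jacobi identity. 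Equivalently, one can phrase it as: the vanishing of the curvature of $\nabla^{\rm rec}$ is exactly the Jacobi identity for $\lambda_{ij}^k$.

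For the second assertion, that the bracket of two $\nabla^{\rm rec}$-horizontal vector fields is again $\nabla^{\rm rec}$-horizontal, I would use Lemma~\ref{Lemma1}: a vector field $\vec Y$ is $\nabla^{\rm rec}$-horizontal if and only if $[\vec X_j,\vec Y] = 0$ for all $j$, i.e., $\vec Y$ is an infinitesimal symmetry of $\omega$. So given two horizontal fields $\vec Y$, $\vec Z$, the claim $[\vec X_j,[\vec Y,\vec Z]] = 0$ follows immediately from the Jacobi identity: $[\vec X_j,[\vec Y,\vec Z]] = [[\vec X_j,\vec Y],\vec Z] + [\vec Y,[\vec X_j,\vec Z]] = [0,\vec Z] + [\vec Y,0] = 0$. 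Thus the space of horizontal fields of $\nabla^{\rm rec}$ is a Lie subalgebra of $\mathfrak X(M)$ — this is the fact that motivates calling $\nabla^{\rm rec}$ (one of) the Lie connections.

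There is essentially no hard part: both statements are immediate consequences of the Jacobi identity once one has the explicit description of $\nabla^{\rm rec}$ in the parallelism frame. The only point requiring a little care is keeping the curvature computation clean — in particular remembering that the correction term $-\nabla^{\rm rec}_{[\vec X_i,\vec X_j]}\vec X_k$ in the curvature is itself nonzero (it equals $-[[\vec X_i,\vec X_j],\vec X_k]$) and is exactly what is needed for the Jacobi cancellation. One could also deduce flatness more abstractly from $\nabla^{\rm rec} = \nabla + \operatorname{Tor}_\nabla$ together with the fact that $\nabla$ is flat and $\operatorname{Tor}_\nabla$ is $\nabla$-parallel with values the structure constants, but the frame computation is shorter and self-contained.
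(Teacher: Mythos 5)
Your proposal is correct and follows essentially the same route as the paper: both parts reduce, via the identities $\nabla^{\rm rec}_{\vec X_i}\vec Y=[\vec X_i,\vec Y]$ and $\nabla^{\rm rec}_{\vec X_i}\vec X_j=[\vec X_i,\vec X_j]$, to the Jacobi identity computed in the parallelism frame. The paper's proof is exactly this computation, so there is nothing to add.
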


\begin{proof}The f\/latness and the preservation of the Lie bracket by $\nabla^{\rm rec}$ are direct consequences of the Jacobi identity. Let us compute the curvature
\begin{gather*}R\big(\vec X_i,\vec X_j,\vec X_k\big) = \nabla^{\rm rec}_{\vec X_i}\big(\nabla^{\rm rec}_{\vec X_j} X_k\big)
- \nabla^{\rm rec}_{\vec X_j}\big(\nabla^{\rm rec}_{\vec X_i} \vec X_k\big)
- \nabla^{\rm rec}_{[\vec X_i,\vec X_j]}\vec X_k\\
\hphantom{R\big(\vec X_i,\vec X_j,\vec X_k\big)}{} = \rho ([A_i,[A_j,A_k]] - [A_j,[A_i, A_k]] - [[A_i,A_j],A_k]) = 0.\end{gather*}
Let us compute the Lie bracket for $\vec Y$ and $\vec Z$ $\nabla^{\rm rec}$-horizontal vector f\/ields
\begin{gather*}\nabla^{\rm rec}_{\vec X_i}\big[\vec Y, \vec Z\big]\! = \big[\vec X_i,\big[\vec Y, \vec Z\big]\big]\! =
\big[\big[\vec X_i, \vec Y\big], \vec Z\big]\! + \big[ \vec Y, \big[\vec X_i, \vec Z \big]\big]\! =
\big[\nabla^{\rm rec}_{\vec X_i}\vec Y, \vec Z\big]\! +\big[\vec Y, \nabla^{\rm rec}_{\vec X_i}\vec Z\big]\! = 0.\!\!\!\!\!\!\tag*{\qed}\end{gather*}\renewcommand{\qed}{}
\end{proof}

\begin{Lemma}\label{Lemma3} Let $x\in M$ be a regular point of the parallelism form $\omega$. The space of germs at~$x$ of horizontal vector fields for $\nabla^{\rm rec}$ is a Lie algebra isomorphic to $\mathfrak g$. Moreover, let $\vec Y_1,\ldots, \vec Y_r$ be horizontal vector fields with initial conditions $\vec Y_i(x) = \vec X_i(x)$, then $\big[\vec Y_i, \vec Y_j\big] = - \sum\limits_{k=1}^r \lambda_{ij}^k \vec Y_k$, where the $\lambda_{i,j}$ are the structure constants of the Lie algebra generated by the $\vec X_i$.
\end{Lemma}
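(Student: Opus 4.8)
The statement has two parts: first, that the stalk at a regular point $x$ of the sheaf of $\nabla^{\rm rec}$-horizontal vector fields is a Lie algebra isomorphic to $\mathfrak g$; second, the sign-flipped bracket identity $[\vec Y_i,\vec Y_j] = -\sum_k \lambda_{ij}^k \vec Y_k$ for the horizontal fields matching the $\vec X_i$ at $x$. The natural route is to work locally in a neighbourhood of $x$ where, by hypothesis, $\omega$ (and hence $\rho$, $\nabla$, $\nabla^{\rm rec}$) is regular and the $\vec X_i$ form a frame.

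\emph{First}, I would invoke Lemma~\ref{Lemma2}: $\nabla^{\rm rec}$ is flat, so near $x$ the sheaf of horizontal sections of $TM$ for $\nabla^{\rm rec}$ is locally constant of rank $r$; concretely, for any initial vector $v \in T_xM$ there is a unique germ of horizontal field $\vec Y$ with $\vec Y(x)=v$, because horizontality $\nabla^{\rm rec}\vec Y = 0$ is (in the frame $\{\vec X_i\}$) a completely integrable linear first-order PDE system whose integrability is exactly flatness. So the evaluation-at-$x$ map sends this stalk isomorphically (as a vector space) onto $T_xM$, giving dimension $r$. Again by Lemma~\ref{Lemma2}, the Lie bracket of two horizontal germs is horizontal, so the stalk is closed under bracket and is therefore an $r$-dimensional Lie algebra, call it $\mathfrak h$.

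\emph{Second}, for the bracket relations, let $\vec Y_1,\dots,\vec Y_r$ be the horizontal germs with $\vec Y_i(x) = \vec X_i(x)$. The field $[\vec Y_i,\vec Y_j]$ is horizontal, hence equals $\sum_k c_{ij}^k \vec Y_k$ for \emph{constants} $c_{ij}^k$ (a horizontal field is determined by its value at $x$, and two horizontal fields agreeing at $x$ coincide). To identify the constants, evaluate at $x$: I would compute $[\vec Y_i,\vec Y_j](x)$ using that $\vec Y_i$ and $\vec X_i$ have the same $1$-jet obstruction only up to the $\nabla^{\rm rec}$-horizontality condition. The cleanest computation uses Lemma~\ref{Lemma1}: since each $\vec Y_j$ is an infinitesimal symmetry of $\omega$, we have $[\vec X_i,\vec Y_j]=0$; writing $\vec Y_j = \sum_\ell g_{j\ell}\vec X_\ell$ with $g_{j\ell}(x)=\delta_{j\ell}$, the vanishing $[\vec X_i,\vec Y_j]=0$ forces $\vec X_i g_{j\ell} = -\sum_m g_{jm}\lambda_{im}^\ell$, so at $x$: $\vec X_i g_{j\ell}(x) = -\lambda_{ij}^\ell$. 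Then expand $[\vec Y_i,\vec Y_j](x) = \sum_{\ell}\big(\vec Y_i g_{j\ell} - \vec Y_j g_{i\ell}\big)(x)\vec X_\ell(x) + \sum_{\ell,m} g_{i\ell}g_{jm}\lambda_{\ell m}^k(x)\cdots$; at $x$ the derivative terms give $\vec Y_i g_{j\ell}(x) = \vec X_i g_{j\ell}(x) = -\lambda_{ij}^\ell$ and $-\vec Y_j g_{i\ell}(x) = +\lambda_{ji}^\ell = -\lambda_{ij}^\ell$, while the structure-constant term contributes $\sum_k \lambda_{ij}^k\vec X_k(x)$. Combining, $[\vec Y_i,\vec Y_j](x) = (-2+1)\sum_k\lambda_{ij}^k\vec X_k(x) = -\sum_k\lambda_{ij}^k\vec X_k(x) = -\sum_k\lambda_{ij}^k\vec Y_k(x)$, and since both sides are horizontal this upgrades from $x$ to a germ identity: $[\vec Y_i,\vec Y_j] = -\sum_k\lambda_{ij}^k\vec Y_k$.

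\emph{Finally}, the sign-flipped structure constants $-\lambda_{ij}^k$ are those of the opposite Lie algebra $\mathfrak g^{\rm op}$, which is isomorphic to $\mathfrak g$ via $A\mapsto -A$; hence $\mathfrak h \cong \mathfrak g^{\rm op} \cong \mathfrak g$. The main technical point to be careful about is the bookkeeping in the bracket expansion — making sure the derivative-of-coefficients terms and the structure-constant term are combined with the correct signs, and that the passage from "equality at $x$" to "equality of germs" is justified by uniqueness of horizontal extensions (flatness, Lemma~\ref{Lemma2}). Everything else is formal.
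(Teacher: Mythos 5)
Your proposal is correct and follows essentially the same route as the paper: expand $\vec Y_j$ in the frame $\{\vec X_\ell\}$, derive the linear first-order system for the coefficient functions from $[\vec X_i,\vec Y_j]=0$, expand the bracket $[\vec Y_i,\vec Y_j]$, and evaluate at $x$ where the coefficient matrix is the identity, obtaining the net factor $(-2+1)$ on $\lambda_{ij}^k$. Your additional remarks on the first claim (flatness giving the rank-$r$ stalk, and the passage from equality at $x$ to equality of germs via uniqueness of horizontal extensions) only make explicit what the paper leaves implicit.
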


\begin{proof} We can write the vector f\/ields $\vec Y_i$ as linear combinations of the vector f\/ields $\vec X_i$: $\vec Y_i = \sum\limits_{j=1}^r a_{ji}\vec X_j$. The matrix $(a_{ij})$ satisf\/ies the dif\/ferential equation
\begin{gather*}\vec X_{k}a_{ij} = - \sum_{\alpha = 1}^r \lambda_{k \alpha}^i a_{\alpha j}, \qquad a_{ij}(x) = \delta_{ij}.\end{gather*}
On the other hand, we have $\big[\vec Y_i, \vec Y_j\big](x) = \sum\limits_{k=1}^r \hat\lambda_{ij}^k \vec Y_k (x)$, for certain unknown structure cons\-tants~$\hat\lambda_{ij}^k$. Let us check that $\hat\lambda_{ij}^k = \lambda_{ji}^k = -\lambda_{ij}^k$,
\begin{gather*}\big[\vec Y_i,\vec Y_j\big] = \left[\sum_{\alpha=1}^r a_{\alpha i} \vec X_\alpha, \sum_{\beta = 1}^r a_{\beta j} \vec X_{\beta} \right] \\
\hphantom{\big[\vec Y_i,\vec Y_j\big]}{} = \sum_{\alpha, \beta, \gamma =1}^r -a_{\alpha i}\lambda_{\alpha \gamma}^\beta a_{\gamma j}\vec X_\beta
+ \sum_{\alpha, \beta, \gamma =1}^r a_{\beta j}\lambda_{\beta\gamma}^\alpha a_{\gamma i} \vec X_{\alpha}
+ \sum_{\alpha, \beta, \gamma =1}^r a_{\beta j} a_{\alpha i} \lambda_{\alpha \beta}^\gamma \vec X_\gamma.\end{gather*}
Taking values at $x$, we obtain
\begin{gather*}\big[\vec Y_i, \vec Y_j\big](x) = \sum_{\beta =1}^r - \lambda_{i j}^\beta \vec Y_\beta(x)
+ \sum_{\alpha =1}^r \lambda_{j i}^\alpha \vec Y_{\alpha}(x) + \sum_{\gamma =1}^r \lambda_{i j}^\gamma \vec Y_\gamma(x) =
\sum_{\alpha =1}^r \lambda_{j i}^k \vec Y_{k}(x).\tag*{\qed}\end{gather*}\renewcommand{\qed}{}
\end{proof}

\begin{Example}Let $G$ be an algebraic group with Lie algebra $\mathfrak g$. As seen in Example~\ref{ex:group} the Maurer--Cartan structure form $\omega$ is a coparallelism in~$G$. Let $\nabla$ be the connection associated to this coparallelism. There is another canonical coparallelism, the right invariant Maurer--Cartan structure form $\omega_{\rm rec}$, let us consider ${\bf i}\colon G\to G$ the inversion map,
\begin{gather*}\omega_{\rm rec} = - {\bf i}^*(\omega).\end{gather*}
As may be expected, the connection associated to the coparallelism $\omega_{\rm rec}$ is $\nabla^{\rm rec}$. Right invariant vector f\/ields in $G$ are inf\/initesimal symmetries of left invariant vector f\/ields and vice versa. In this case, the horizontal vector f\/ields of $\nabla$ and $\nabla^{\rm rec}$ are regular vector f\/ields.
\end{Example}

As shown in the next three examples, symmetries of a rational parallelism are not in general rational vector f\/ields.

\begin{Example}Let us consider the Lie algebra $\mathfrak g$ and the coparallelism $\omega = A_1{\rm d}x + (A_2 - x A_1){\rm d}y$, of Example~\ref{ex:B}. Let $\nabla$ be its associated connection. In cartesian coordinates, the only non-vanishing Christof\/fel symbol of the reciprocal connection is $\Gamma_{21}^1 = -1$. A basis of $\nabla^{\rm rec}$-horizontal vector f\/ields is
\begin{gather*}\vec Y_1 = e^y\frac{\partial}{\partial x}, \qquad \vec Y_2 = \frac{\partial}{\partial y}.\end{gather*}
Note that they coincide with $\vec X_1$, $\vec X_2$ at the origin point and $\big[\vec Y_1,\vec Y_2\big] = - Y_1$.
\end{Example}

\begin{Example} Let us consider the Lie algebra $\mathfrak g$ and the coparallelism $\omega = (A_1 - \alpha y A_2 - \beta z A_3){\rm d}x + A_2{\rm d}y + A_3{\rm d}z$ of Example~\ref{ex:MD}. Let $\nabla$ be its associated connection. In cartesian coordinates, the only non-vanishing Christof\/fel symbols of the reciprocal connection are
\begin{gather*}\Gamma_{11}^2 = -\alpha,\qquad \Gamma_{11}^3 = -\beta.\end{gather*}
A basis of $\nabla^{\rm rec}$-horizontal vector f\/ields is
\begin{gather*}\vec Y_1 = \frac{\partial}{\partial x}, \qquad \vec Y_2 = e^{\alpha x}\frac{\partial}{\partial y},\qquad \vec Y_3 = e^{\beta x}\frac{\partial}{\partial z}.\end{gather*}
Note that they coincide with $\vec X_1$, $\vec X_2$, $\vec X_3$ at the origin point and
\begin{gather*}\big[\vec Y_1,\vec Y_2\big] = - \alpha Y_2,\qquad \big[\vec Y_1,\vec Y_3\big] = -\beta \vec Y_3.\end{gather*}
\end{Example}

\begin{Example}Let us consider the Lie algebra $\mathfrak g$ of Example~\ref{ex:MD} and the coparallelism
\begin{gather*}\omega = (A_1 - \alpha y A_2 - \beta z A_3)\frac{{\rm d}x}{x} + A_2{\rm d}y + A_3{\rm d}z.\end{gather*}
Let $\nabla$ be its associated connection. In cartesian coordinates, the only non-vanishing Christof\/fel symbols of the reciprocal connection are
\begin{gather*}\Gamma_{11}^2 = -\alpha,\qquad \Gamma_{11}^3 = -\beta.\end{gather*}
A basis of $\nabla^{\rm rec}$-horizontal vector f\/ields on a simply connected open subspace $U\subset \mathbb C^\ast \times \mathbb C^2$ is
\begin{gather*}\vec Y_1 = x\frac{\partial}{\partial x}, \qquad \vec Y_2 = x^{\alpha}\frac{\partial}{\partial y}, \qquad \vec Y_3 = x^{\beta}\frac{\partial}{\partial z}.\end{gather*}
\end{Example}

\subsection{Lie connections}

The connections $\nabla$ and $\nabla^{\rm rec}$ associated to a coparallelism $\omega$ of type $\mathfrak g$ are particular cases of the following def\/inition.

\begin{Definition} A Lie connection (regular or rational) in $M$ is a f\/lat connection $\nabla$ in $TM$ such that the Lie bracket of any two horizontal vector f\/ields is a horizontal vector f\/ield.
\end{Definition}

Given a Lie connection $\nabla$ in $M$, there is a $r$-dimensional Lie algebra $\mathfrak g$ such that the space of germs of horizontal vector f\/ields at a regular point $x$ is a~Lie algebra isomorphic to $\mathfrak g$. We will say that $\nabla$ is a Lie connection of type~$\mathfrak g$. The following result gives several algebraic characterizations of Lie connections:

\begin{Proposition}\label{prop_Lie_char} Let $\nabla$ be a linear connection in $TM$, the following statements are equivalent:
\begin{itemize}\itemsep=0pt
\item[$(1)$] $\nabla$ is a Lie connection;
\item[$(2)$] $\nabla^{\rm rec}$ is a Lie connection;
\item[$(3)$] $\nabla$ is flat and has constant torsion, $\nabla \operatorname{Tor}_{\nabla} = 0$;
\item[$(4)$] $\nabla$ and $\nabla^{\rm rec}$ are flat.
\end{itemize}
\end{Proposition}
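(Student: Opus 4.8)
The plan is to fix a point $x\in M$ at which $\nabla$ is regular and to work on a small analytic neighbourhood of $x$ on which $\nabla$ admits a parallel frame $\vec e_1,\dots,\vec e_r$, i.e.\ $\nabla_{\vec X}\vec e_i=0$ for every $\vec X$ and every $i$; such a frame exists precisely because $\nabla$ is flat, and flatness of $\nabla$ is part of each of the conditions $(1)$, $(3)$, $(4)$ (if $\nabla$ is not flat, all three fail, so the equivalences hold trivially). I then translate everything into properties of the structure functions $T_{ij}^k$ defined by $\operatorname{Tor}_\nabla(\vec e_i,\vec e_j)=\sum_k T_{ij}^k\vec e_k$. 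In such a frame $[\vec e_i,\vec e_j]=\nabla_{\vec e_i}\vec e_j-\nabla_{\vec e_j}\vec e_i-\operatorname{Tor}_\nabla(\vec e_i,\vec e_j)=-\sum_k T_{ij}^k\vec e_k$, while $\nabla^{\rm rec}_{\vec e_i}\vec e_j=\nabla_{\vec e_j}\vec e_i+[\vec e_i,\vec e_j]=[\vec e_i,\vec e_j]$, so the Christoffel symbols of $\nabla^{\rm rec}$ in the frame $\{\vec e_i\}$ are $-T_{ij}^k$.

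\emph{Step 1: $(1)\Leftrightarrow(3)$.} A vector field $Z=\sum_k f_k\vec e_k$ is $\nabla$-horizontal if and only if every $f_k$ is locally constant, since $\nabla_{\vec X}Z=\sum_k(\vec X f_k)\vec e_k$ and the $\vec e_i$ span $TM$. Hence the horizontal vector fields are exactly the $\mathbf C$-linear combinations of the $\vec e_i$, and the Lie bracket of any two of them is horizontal if and only if each $[\vec e_i,\vec e_j]=-\sum_k T_{ij}^k\vec e_k$ is horizontal, that is, if and only if every $T_{ij}^k$ is locally constant, i.e.\ $\nabla\operatorname{Tor}_\nabla=0$. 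Adjoining the common hypothesis that $\nabla$ is flat gives $(1)\Leftrightarrow(3)$.

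\emph{Step 2: $(3)\Leftrightarrow(4)$.} Here I compute the curvature $R^{\rm rec}$ of $\nabla^{\rm rec}$ in the parallel frame. Using $\nabla^{\rm rec}_{\vec e_i}\vec e_j=-\sum_k T_{ij}^k\vec e_k$ and $[\vec e_i,\vec e_j]=-\sum_k T_{ij}^k\vec e_k$, a direct expansion yields
\begin{gather*}
R^{\rm rec}(\vec e_i,\vec e_j,\vec e_k)=\sum_l\big(-\vec e_i T_{jk}^l+\vec e_j T_{ik}^l-J_{ijk}^l\big)\vec e_l,\\
J_{ijk}^l:=\sum_m\big(T_{ij}^m T_{mk}^l+T_{jk}^m T_{mi}^l+T_{ki}^m T_{mj}^l\big).
\end{gather*}
On the other hand, expanding the Jacobi identity $\sum_{\rm cyc}[[\vec e_i,\vec e_j],\vec e_k]=0$ in the same frame gives $J_{ijk}^l=-\big(\vec e_k T_{ij}^l+\vec e_i T_{jk}^l+\vec e_j T_{ki}^l\big)$. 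If $\nabla\operatorname{Tor}_\nabla=0$ the functions $T_{ij}^l$ are locally constant, so both the derivative terms and $J_{ijk}^l$ vanish and $R^{\rm rec}=0$; this is $(3)\Rightarrow(4)$. Conversely, if $\nabla^{\rm rec}$ is flat, substituting the Jacobi expression for $J_{ijk}^l$ into $R^{\rm rec}(\vec e_i,\vec e_j,\vec e_k)=0$ and using the antisymmetry $T_{ab}^c=-T_{ba}^c$, all derivative terms cancel except $\vec e_k T_{ij}^l$, so $\vec e_k T_{ij}^l=0$ for all indices, i.e.\ $\nabla\operatorname{Tor}_\nabla=0$; together with flatness of $\nabla$ (part of $(4)$) this is $(4)\Rightarrow(3)$.

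\emph{Step 3: $(2)\Leftrightarrow(4)$ and the main obstacle.} Steps 1--2 prove, for an \emph{arbitrary} linear connection, the equivalence ``it is a Lie connection $\Leftrightarrow$ it and its reciprocal are both flat''. Applying this to $\nabla^{\rm rec}$ and using $(\nabla^{\rm rec})^{\rm rec}=\nabla$ shows that $(2)$ is equivalent to ``$\nabla^{\rm rec}$ and $\nabla$ are both flat'', which is $(4)$. The only genuine computation in the whole argument is the curvature expansion of Step 2; everything else is bookkeeping in the parallel frame. The delicate point — where a calculation can go astray — is the sign bookkeeping there: one must check that after inserting the Jacobi relation the derivative part of $R^{\rm rec}$ collapses onto the single term $\vec e_k T_{ij}^l$, which is exactly what forces $\operatorname{Tor}_\nabla$ to be $\nabla$-parallel.
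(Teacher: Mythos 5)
Your proof is correct --- I checked the curvature expansion and the sign bookkeeping in Step~2, and the derivative terms do collapse onto the single term $\vec e_k T_{ij}^l$ as you claim --- but your route differs from the paper's in a substantive way. For $(1)\Leftrightarrow(3)$ you and the paper do essentially the same thing: the paper's identity $(\nabla_X\operatorname{Tor}_{\nabla})(Y,Z)=-\nabla_X[Y,Z]$ for horizontal $Y$, $Z$ is just the invariant form of your observation that the $T_{ij}^k$ are locally constant if and only if brackets of horizontal fields are horizontal. The divergence is in how the reciprocal connection enters. The paper never proves $(3)\Leftrightarrow(4)$ directly: it obtains $(1)\Rightarrow(2)$ by invoking Lemma~\ref{Lemma2}, which rests on the local existence of a (possibly transcendental) parallelism whose associated connection is $\nabla$, and it obtains $(4)\Rightarrow(1)$ by a Jacobi-identity argument with two bases of horizontal fields, one for $\nabla$ and one for $\nabla^{\rm rec}$, using that $\nabla$-horizontality is characterized by commuting with the $\nabla^{\rm rec}$-horizontal basis. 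You instead compute the curvature of $\nabla^{\rm rec}$ explicitly in a single $\nabla$-parallel frame and show, via the Jacobi identity for that frame, that $R^{\rm rec}$ reduces exactly to $\vec e_kT_{ij}^l$; this yields the sharper statement that, for flat $\nabla$, flatness of $\nabla^{\rm rec}$ is literally equivalent to $\nabla\operatorname{Tor}_{\nabla}=0$, and $(2)$ then follows from the involution $(\nabla^{\rm rec})^{\rm rec}=\nabla$ rather than from an auxiliary lemma. The paper's argument is shorter and coordinate-free but leans on the existence of a local parallelism adapted to $\nabla$; yours is self-contained and quantifies the obstruction precisely, at the price of an index computation whose only delicate point is the sign cancellation you flagged, which is indeed where the argument would break if done carelessly.
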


\begin{proof} Let us f\/irst see (1)$\Leftrightarrow$(2). Let $\nabla$ be a Lie connection. Around each point of the domain of $\nabla$ there is a parallelism, by possibly transcendental vector f\/ields, such that $\nabla$ is its associated connection. Then, Lemma~\ref{Lemma2} states (1)$\Rightarrow$(2). Taking into account that $(\nabla^{\rm rec})^{\rm rec} = \nabla$ we have the desired equivalence.

Let us see now that (1)$\Leftrightarrow$(3). Let us assume that $\nabla$ is a f\/lat connection. For any three vector f\/ields $X$, $Y$, $Z$ in $M$ we have
\begin{gather*}(\nabla_X \operatorname{Tor}_{\nabla})(Y,Z) = - \operatorname{Tor}_{\nabla}(\nabla_X Y,Z ) - \operatorname{Tor}_{\nabla}(Y,\nabla_X Z) + \nabla_X \operatorname{Tor}_{\nabla}(Y,Z).
\end{gather*}
Let us assume that $Y$ and $Z$ are $\nabla$-horizontal vector f\/ields. Then, we have
\begin{gather*} \operatorname{Tor}_{\nabla}(Y,Z) = \nabla_Y Z - \nabla_Z Y - [Y,Z] = - [Y,Z]\end{gather*}
and the previous equality yields
\begin{gather*}(\nabla_X \operatorname{Tor}_{\nabla})(Y,Z) = - \nabla_X[Y,Z].\end{gather*}
Thus, we have that $\nabla \operatorname{Tor}_{\nabla}$ vanishes if and only if the Lie bracket of any two $\nabla$-horizontal vector f\/ields is also $\nabla$-horizontal. This proves (1)$\Leftrightarrow$(3).

Finally, let us see (1)$\Leftrightarrow$(4). It is clear that (1) implies (4) so we only need to see (4)$\Rightarrow$(1). Assume $\nabla$ and $\nabla^{\rm rec}$ are f\/lat. Then, locally, there exist a basis $\{\vec X_i\}$ of $\nabla$-horizontal vector f\/ields and a~basis $\{\vec Y_i\}$ of $\nabla^{\rm rec}$-horizontal vector f\/ields. By the def\/inition of the reciprocal connection, we have that a vector f\/ield $\vec X$ is $\nabla$-horizontal if and only if it satisf\/ies $[\vec X, \vec Y_i]=0$ for $i=1,\ldots,r$. By the Jacobi identity we have
\begin{gather*}\big[\big[\vec X_i,\vec X_j\big],\vec Y_k\big]= 0.\end{gather*}
The Lie brackets $\big[\vec X_i,\vec X_j\big]$ are also $\nabla$-horizontal and $\nabla$ is a Lie connection.
\end{proof}

\begin{Lemma} Let $\nabla$ be a Lie connection on $M$. Let $x$ be a regular point and $\vec X_1,\ldots, \vec X_r$ and $\vec Y_1,\ldots, \vec Y_r$ be basis of horizontal vector field germs on $M$ for $\nabla$ and~$\nabla^{\rm rec}$ respectively with same initial conditions $\vec X_i(x) = \vec Y_i(x)$. Then
\begin{gather*}\big[\vec X_i,\vec X_j\big](x) = - \big[\vec Y_i,\vec Y_j\big](x).\end{gather*}
It follows that $\nabla$ and $\nabla^{\rm rec}$ are of the same type $\mathfrak g$.
\end{Lemma}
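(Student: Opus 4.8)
The plan is to reduce the statement to the computation of Lie brackets already carried out in Lemma~\ref{Lemma3}. Since $\nabla$ is a Lie connection, Proposition~\ref{prop_Lie_char} tells us that $\nabla^{\rm rec}$ is also a Lie connection; moreover, working locally around the regular point $x$, the basis $\vec X_1,\ldots,\vec X_r$ of $\nabla$-horizontal germs defines a (possibly transcendental) local parallelism whose associated connection is precisely $\nabla$. Write $[\vec X_i,\vec X_j] = \sum_k \lambda_{ij}^k \vec X_k$ for its structure constants. Then $\nabla^{\rm rec}$ is exactly the reciprocal connection of that parallelism, so Lemma~\ref{Lemma3} applies verbatim: the germs at $x$ of $\nabla^{\rm rec}$-horizontal vector fields form a Lie algebra, and the distinguished basis $\vec Y_1,\ldots,\vec Y_r$ with $\vec Y_i(x)=\vec X_i(x)$ satisfies $[\vec Y_i,\vec Y_j] = -\sum_k \lambda_{ij}^k \vec Y_k$.

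Next I would evaluate both bracket identities at the point $x$. On one side, $[\vec X_i,\vec X_j](x) = \sum_k \lambda_{ij}^k \vec X_k(x)$. On the other side, $[\vec Y_i,\vec Y_j](x) = -\sum_k \lambda_{ij}^k \vec Y_k(x) = -\sum_k \lambda_{ij}^k \vec X_k(x)$, using the hypothesis $\vec Y_k(x)=\vec X_k(x)$. Comparing the two expressions gives immediately $[\vec X_i,\vec X_j](x) = -[\vec Y_i,\vec Y_j](x)$, which is the asserted identity since every vector field is an $\mathbf{C}[M]$-combination of the $\vec X_i$ and of the $\vec Y_i$.

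For the final sentence, I would argue that the two Lie algebras of horizontal germs are abstractly isomorphic. Let $\mathfrak g$ be the type of $\nabla$, realized on the basis $\vec X_i$ with structure constants $\lambda_{ij}^k$; then by Lemma~\ref{Lemma3} the type of $\nabla^{\rm rec}$ is realized on the basis $\vec Y_i$ with structure constants $-\lambda_{ij}^k$. But the linear map sending $\vec X_i \mapsto -\vec Y_i$ (equivalently, $-\mathrm{Id}$ in these coordinates) is a Lie algebra isomorphism between the two: it carries a bracket with constants $\lambda_{ij}^k$ to one with constants $-\lambda_{ij}^k$. Hence the space of $\nabla^{\rm rec}$-horizontal germs is isomorphic to $\mathfrak g$ as well, so $\nabla$ and $\nabla^{\rm rec}$ are of the same type.

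I do not expect any real obstacle here; the only point requiring a little care is the localization step—ensuring that near the regular point $x$ one genuinely has a local parallelism by horizontal vector fields so that Lemma~\ref{Lemma3} can be invoked—but this is exactly the reduction already used in the proof of Proposition~\ref{prop_Lie_char}, so it can be taken for granted. The computation itself is the same one performed in Lemma~\ref{Lemma3}, merely read off at the base point.
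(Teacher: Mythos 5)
Your proposal is correct and follows essentially the same route as the paper: observe that near the regular point the $\nabla$-horizontal germs form a local (possibly transcendental) parallelism whose associated connection is $\nabla$, and then invoke Lemma~\ref{Lemma3} to get $[\vec Y_i,\vec Y_j]=-\sum_k\lambda_{ij}^k\vec Y_k$ and evaluate at $x$. The paper's proof is just a one-line version of this reduction; your added details (the evaluation at $x$ and the isomorphism $\vec X_i\mapsto-\vec Y_i$ between a Lie algebra and its opposite) are correct fillings-in rather than a different argument.
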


\begin{proof} By def\/inition $\nabla$ is the connection associated to the local analytic parallelism given by the basis $\{\vec X_i\}$ of horizontal vector f\/ields. Then we apply Lemma~\ref{Lemma3} in order to obtain the desired conclusion.
\end{proof}

\subsection{Some results on Lie connections by means of Picard--Vessiot theory}

Def\/initions and general results concerning the Picard--Vessiot theory of connections are given in Appendix~\ref{ApA}.

\begin{Proposition} Let $\nabla$ be a rational Lie connection in $TM$. The $\nabla$-horizontal vector fields are the symmetries of a rational parallelism of $M$ if and only if $\operatorname{Gal}(\nabla^{\rm rec}) = \{1\}$.
\end{Proposition}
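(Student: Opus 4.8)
The plan is to prove the equivalence by showing that both conditions amount to the existence of a \emph{rational} parallelism whose infinitesimal symmetries are exactly the $\nabla^{\mathrm{rec}}$-horizontal vector fields. First I would recall, from the Lemmas above, that since $\nabla$ is a Lie connection, so is $\nabla^{\mathrm{rec}}$, and its horizontal vector fields form, at a regular point, a Lie algebra isomorphic to some $\mathfrak g$. By Lemma~\ref{Lemma1}, for any parallelism $\omega'$ whose associated connection is $\nabla$, the $\nabla^{\mathrm{rec}}$-horizontal vector fields are precisely the infinitesimal symmetries of $\omega'$. So the statement really asks: when does the Lie connection $\nabla$ itself arise as the connection associated to a \emph{rational} coparallelism on $M$? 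Equivalently, when does $TM$ admit a $\nabla$-horizontal global rational frame $\vec X_1,\dots,\vec X_r$, spanning at the generic point, whose brackets have constant structure constants (which is automatic once they are horizontal, by flatness of $\nabla^{\mathrm{rec}}$ and the Lie-connection property)? The dual formulation is cleaner: $\nabla$ comes from a rational coparallelism iff there is a $\mathfrak g$-valued rational $1$-form $\omega' \in \Omega^1(M)\otimes\mathfrak g$, pointwise an isomorphism on a Zariski-dense open set, with $\nabla\omega' = 0$.

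Next I would translate ``$\nabla$ admits a horizontal rational frame'' into a Picard--Vessiot statement, using Appendix~\ref{ApA}. The connection $\nabla$ on $TM$ has a Picard--Vessiot extension and a differential Galois group $\operatorname{Gal}(\nabla)$; the $\nabla$-horizontal sections live in the solution space, and they are rational over $\mathbf C(M)$ exactly when $\operatorname{Gal}(\nabla)$ acts trivially on them, i.e.\ when $\operatorname{Gal}(\nabla) = \{1\}$. But $\nabla$ is the \emph{reciprocal} of $\nabla^{\mathrm{rec}}$, and the excerpt has set up the entire framework so that the ``direct'' connection associated to a parallelism has trivial Galois group precisely because its horizontal frame $\{\vec X_i\}$ is rational by construction. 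So the key identification I need is: $\operatorname{Gal}(\nabla) = \{1\}$ for the linear connection $\nabla$ whose reciprocal is the given $\nabla^{\mathrm{rec}}$ is equivalent to $\operatorname{Gal}(\nabla^{\mathrm{rec}}) = \{1\}$ being replaced by the correct object --- wait, rather, the statement as written wants $\operatorname{Gal}(\nabla^{\mathrm{rec}}) = \{1\}$. Let me re-read: the $\nabla$-horizontal fields (for the ambient Lie connection $\nabla$, which plays the role of $(\nabla^{\mathrm{rec}})^{\mathrm{rec}}$) are the symmetries of a rational parallelism iff $\operatorname{Gal}(\nabla^{\mathrm{rec}}) = \{1\}$. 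Hmm --- here the roles are: the \emph{given} Lie connection is called $\nabla$, its symmetries (by Lemma~\ref{Lemma1} with the roles swapped) are the $\nabla$-horizontal fields, and these are symmetries of the parallelism determined by a horizontal frame of $\nabla^{\mathrm{rec}}$. So: the $\nabla$-horizontal fields are the symmetries of a rational parallelism iff $\nabla^{\mathrm{rec}}$ admits a rational horizontal frame iff $\operatorname{Gal}(\nabla^{\mathrm{rec}}) = \{1\}$.

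With that clarified, the argument runs as follows. For the ``if'' direction, suppose $\operatorname{Gal}(\nabla^{\mathrm{rec}}) = \{1\}$. Then by the Picard--Vessiot dictionary (Appendix~\ref{ApA}) the Picard--Vessiot extension of $\nabla^{\mathrm{rec}}$ is trivial, so a full basis $\vec Y_1,\dots,\vec Y_r$ of $\nabla^{\mathrm{rec}}$-horizontal vector fields is defined over $\mathbf C(M)$; these are pointwise independent on a dense open set (a horizontal frame of a flat connection is nonvanishing where defined and a basis at one point stays a basis by continuity/constancy of the connection matrix in a horizontal gauge). By the Lie-connection property their brackets have constant structure constants, so $\omega'$ defined by $\omega'(\vec Y_i) = A_i$ is a rational coparallelism of type $\mathfrak g$; its associated connection is $\nabla^{\mathrm{rec}}$ by construction, hence $\nabla$ is the reciprocal, and Lemma~\ref{Lemma1} identifies the symmetries of $\omega'$ with the $\nabla$-horizontal fields. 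For the ``only if'' direction, suppose the $\nabla$-horizontal vector fields are the infinitesimal symmetries of some rational parallelism $\omega'$ on $M$. By Lemma~\ref{Lemma1} applied to $\omega'$, its symmetries are the horizontal fields of $(\nabla_{\omega'})^{\mathrm{rec}}$, where $\nabla_{\omega'}$ is the connection associated to $\omega'$; matching horizontal fields on a dense open set forces $(\nabla_{\omega'})^{\mathrm{rec}} = \nabla$, hence $\nabla_{\omega'} = \nabla^{\mathrm{rec}}$. But $\nabla^{\mathrm{rec}} = \nabla_{\omega'}$ has the rational horizontal frame $\{\vec X_i^{\,\omega'}\}$ coming from the parallelism $\omega'$, so its Picard--Vessiot extension is trivial and $\operatorname{Gal}(\nabla^{\mathrm{rec}}) = \{1\}$.

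The main obstacle I anticipate is purely bookkeeping about Zariski-open loci: ``a rational parallelism'' means regular on some dense open $M^\star$, and a rational horizontal frame of a flat connection is automatically generically nonvanishing and generically independent, but one must check that independence at a single regular point propagates --- this is where one uses that in a horizontal gauge the frame is constant, so the locus of independence is open and nonempty, hence dense. A secondary point requiring care is the precise statement from Appendix~\ref{ApA} equating triviality of $\operatorname{Gal}(\nabla^{\mathrm{rec}})$ with the existence of a full basis of rational horizontal sections; granting that standard Picard--Vessiot fact, the rest is the formal interplay of Lemmas~\ref{Lemma1} and~\ref{Lemma2} together with the involutivity $(\nabla^{\mathrm{rec}})^{\mathrm{rec}} = \nabla$.
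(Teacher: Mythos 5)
Your proposal is correct and follows essentially the same route as the paper: both reduce the statement to the Picard--Vessiot fact that $\operatorname{Gal}(\nabla^{\rm rec})=\{1\}$ if and only if $\nabla^{\rm rec}$ admits a full rational horizontal frame (the paper phrases this via the leaf of the induced foliation on $R^1(TM)$ being the graph of a rational section), and then identify that frame with a rational parallelism whose symmetries are, by Lemma~\ref{Lemma1} and $(\nabla^{\rm rec})^{\rm rec}=\nabla$, exactly the $\nabla$-horizontal fields. Your write-up merely spells out both implications and the constancy of the structure constants more explicitly than the paper does.
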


\begin{proof} We will use the notations of Section~\ref{A6}: $R^1(TM)$ is the ${\rm GL}_n(\mathbb{C})$-principal bundle associated to $TM$ and $\mathcal F'$ is the ${\rm GL}_n(\mathbb{C})$-invariant foliation on $R^1(TM)$ given by graphs of local basis of $\nabla$-horizontal sections. The Galois group $\operatorname{Gal}(\nabla^{\rm rec})$ can be computed as soon as we know the Zariski closure $\overline{\Lc}$ of a leaf $\Lc$ of the induced foliation $\mathcal F'$ on $R^1(TM)$. $\operatorname{Gal}(\nabla^{\rm rec})$ is f\/inite is and only if $\overline{\Lc} = \Lc$ and is $\{1\}$ if and only if $\Lc$ is the graph of a rational section $M \to R^1(TM)$. This means that there exists a basis of rational $\nabla^{\rm rec}$-horizontal sections. These sections give the desired parallelism.
\end{proof}

\begin{Proposition} For any Lie connection $\nabla$, $\operatorname{Gal}(\nabla)\subseteq \operatorname{Aut}(\mathfrak g)$.
\end{Proposition}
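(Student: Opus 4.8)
The plan is to observe that, for a Lie connection, the torsion is a $\nabla$-parallel tensor, hence is fixed by the Galois group acting on the solution space, and that on the solution space $-\operatorname{Tor}_{\nabla}$ is exactly the Lie bracket that makes it a copy of $\mathfrak g$. This forces $\operatorname{Gal}(\nabla)$ to consist of Lie algebra automorphisms.

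In more detail, recall the Picard--Vessiot formalism of Appendix~\ref{ApA}: $\operatorname{Gal}(\nabla)$ is realised as an algebraic subgroup of ${\rm GL}(V)$, where $V$ is the space of $\nabla$-horizontal sections of $TM$ over the Picard--Vessiot extension; the induced connections on the tensor bundles $TM^{\otimes a}\otimes(T^*M)^{\otimes b}$ have the corresponding tensor spaces built from $V$ as their solution spaces, and a section of such a bundle that is rational (defined over $\mathbf C(M)$) and horizontal for the induced connection determines a $\operatorname{Gal}(\nabla)$-fixed vector in the appropriate tensor space, since $\operatorname{Gal}(\nabla)$ acts trivially on $\mathbf C(M)$. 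Now, by Proposition~\ref{prop_Lie_char}\,(3), a Lie connection satisfies $\nabla\operatorname{Tor}_{\nabla}=0$, so the torsion $\operatorname{Tor}_{\nabla}\in\operatorname{Hom}\big(\bigwedge^2 TM,TM\big)$ is a rational section horizontal for the induced connection. Consequently $-\operatorname{Tor}_{\nabla}$ defines a $\operatorname{Gal}(\nabla)$-invariant bilinear alternating map $T\colon\bigwedge^2 V\to V$.

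It remains to identify $T$ with the bracket. If $\vec Y,\vec Z\in V$ then $\nabla\vec Y=\nabla\vec Z=0$, so $\operatorname{Tor}_{\nabla}(\vec Y,\vec Z)=\nabla_{\vec Y}\vec Z-\nabla_{\vec Z}\vec Y-[\vec Y,\vec Z]=-[\vec Y,\vec Z]$; hence $T(\vec Y,\vec Z)=[\vec Y,\vec Z]$, and in particular $[\vec Y,\vec Z]\in V$, so $(V,[\,\cdot\,,\,\cdot\,])$ is a Lie algebra, isomorphic to $\mathfrak g$ because $\nabla$ is a Lie connection of type $\mathfrak g$. Fixing such an isomorphism $V\cong\mathfrak g$ identifies ${\rm GL}(V)$ with ${\rm GL}_r(\mathbf C)$ and the stabiliser of $T$ with $\operatorname{Aut}(\mathfrak g)$. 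Every $\sigma\in\operatorname{Gal}(\nabla)$ fixes $T$, i.e., $\sigma[\vec Y,\vec Z]=[\sigma\vec Y,\sigma\vec Z]$ for all $\vec Y,\vec Z\in V$, so $\operatorname{Gal}(\nabla)\subseteq\operatorname{Aut}(\mathfrak g)$, as claimed.

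The only step that carries real content is the assertion that a $\nabla$-parallel rational tensor gives a $\operatorname{Gal}(\nabla)$-fixed vector in the corresponding tensor space of $V$ — this is the (easy) direction of the tensor formalism for connections recalled in Appendix~\ref{ApA}; once it is in place, recognising $-\operatorname{Tor}_{\nabla}$ as the bracket on $V$ is immediate. As usual, $\operatorname{Gal}(\nabla)$ and the identification $V\cong\mathfrak g$ are canonical only up to conjugation, so the inclusion is to be understood up to simultaneous conjugation in ${\rm GL}_r(\mathbf C)$, exactly as is the inclusion $\operatorname{Aut}(\mathfrak g)\subset{\rm GL}_r(\mathbf C)$.
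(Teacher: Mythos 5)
Your proof is correct and is essentially the paper's argument in different clothing: the paper's invariant functions $H^k_{i,j}$ on $R^1(TM)$ are exactly the components of $-\operatorname{Tor}_{\nabla}$ read in a horizontal moving frame, and their constancy on the Zariski closure of a leaf is the frame-bundle translation of your statement that the rational $\nabla$-parallel tensor $\operatorname{Tor}_{\nabla}$ yields a $\operatorname{Gal}(\nabla)$-fixed alternating map on the solution space $V$. Routing the argument through Proposition~\ref{prop_Lie_char}(3) and the identity $\operatorname{Tor}_{\nabla}(\vec Y,\vec Z)=-[\vec Y,\vec Z]$ on horizontal sections makes the invariance of the bracket slightly more transparent, but the mathematical content is the same.
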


\begin{proof}Let us choose a point $x \in M$ regular for $\nabla$ and a basis $A_1,\ldots, A_r$ of $\mathfrak{g}$, i.e., a basis $Y_1,\ldots, Y_r$ of local $\nabla$-horizontal section of~$TM$ at~$x$.

Using notation of Section~\ref{A6}, we will identify $R^1(T_xM)$ with the set of isomorphisms of linear spaces $ \sigma\colon \mathfrak g \to T_xM$; now $\operatorname{Gal}(\nabla)\subseteq {\rm GL}(\mathfrak g)$. Because of the construction of $\mathfrak g$, we have a~canonical point in~$R^1(TM)$ corresponding to the identity $ \sigma_o\colon \mathfrak g \to T_xM$.

For $m \in M$, if $\sigma$ is an isomorphism from $\mathfrak g$ to $T_{m}M$ then one def\/ines $H^k_{{i,j}}(\sigma)$ to be
\begin{gather*}\frac{[X_{i},X_{j}] \wedge X_{1}\wedge \cdots \wedge \widehat{X_{k}}\wedge \cdots \wedge X_{r}}{X_{k} \wedge X_{1}\wedge \cdots \wedge \widehat{X_{k}} \wedge \cdots \wedge X_{r}} \Big|_m, \end{gather*} where $X_i$ is the horizontal section such that $X_i(m)= \sigma A_i$. These functions are regular functions on~$R^1(TM)$. Moreover they are constant and equal to the constant structures on the Zariski closure of the leaf passing through~$\sigma_o$. The Galois group is the stabilizer of this leaf then the functions~$H^k_{{i,j}}$ are invariant under the action of the Galois group, i.e., the Galois group preserves the Lie bracket.
\end{proof}

\begin{Proposition}\label{prop_example} Let $\mathfrak h'$ be a Lie sub-algebra of the Lie algebra of some algebraic group and let $G$ be the smallest algebraic subgroup such that ${\mathfrak{lie}}(G) = \mathfrak g \supset \mathfrak h'$. Assume the existence of an algebraic subgroup $H$ of $G$ whose Lie algebra $\mathfrak h$ is supplementary to $\mathfrak h'$ in $\mathfrak g$, $\mathfrak g = \mathfrak h \oplus \mathfrak h'$. Let us consider the following objects:
\begin{itemize}\itemsep=0pt
\item[$(a)$] the quotient map $\pi\colon G \to M$ where $M$ is the variety of cosets $H\setminus G$, and $\nabla$ the Lie connection associated to
the parallelism $\pi_*\colon \mathfrak h' \to \mathfrak X[M]$ in $M$ $($as given in Example~{\rm \ref{ex:quotient_c})};
\item[$(b)$] its reciprocal Lie connection $\nabla^{\rm rec}$ on~$M$;
\item[$(c)$] the Lie algebras of right invariant vector fields
\begin{gather*}\mathfrak g^{\rm rec} = {\bf i}_*(\mathfrak g), \qquad \mathfrak h'^{\rm rec} = {\bf i}_*(\mathfrak h'),\end{gather*}
where ${\bf i}$ is the inverse map on $G$.
\end{itemize}
Then, the following statements are true:
\begin{itemize}\itemsep=0pt
\item[$(i)$] $\mathfrak h'$ is an ideal of $\mathfrak g$ $($equivalently $\mathfrak h'^{\rm rec}$ is an ideal of $\mathfrak g^{\rm rec})$;
\item[$(ii)$] $\mathfrak h$ is commutative $($equivalently $H$ is virtually abelian$)$;
\item[$(iii)$] the adjoint action of $G$ on $\mathfrak g^{\rm rec}$ preserves $\mathfrak h'^{\rm rec}$ and thus gives, by restriction, a morphism $\overline{\operatorname{Adj}}\colon G \to \operatorname{Aut}(\mathfrak h'^{\rm rec})$;
\item[$(iv)$] The Galois group of the connection $\nabla^{\rm rec}$ is ${\overline{\operatorname{Adj}}}(H) \subseteq \operatorname{Aut}(\mathfrak h'^{\rm rec})$ and thus virtually abelian.
\end{itemize}
\end{Proposition}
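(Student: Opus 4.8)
We argue from the minimality of $G$, used in the following form: if $H'\subset G$ denotes the connected complex Lie subgroup with $\mathfrak{lie}(H')=\mathfrak h'$ (in general a transcendental subgroup), then its Zariski closure $\overline{H'}$ is an algebraic subgroup whose Lie algebra contains $\mathfrak h'$ and which is contained in every algebraic subgroup with that property; by minimality $G=\overline{H'}$.

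\emph{Items $(i)$--$(iii)$.} For $(i)$, the stabiliser of the linear subspace $\mathfrak h'\subset\mathfrak g$ under the adjoint representation $\operatorname{Adj}\colon G\to \mathrm{GL}(\mathfrak g)$ is an algebraic subgroup of $G$ whose Lie algebra is the normaliser $\{X\in\mathfrak g:[X,\mathfrak h']\subseteq\mathfrak h'\}$; this normaliser contains $\mathfrak h'$ since $\mathfrak h'$ is a subalgebra, so by minimality the stabiliser equals $G$. Hence $\operatorname{Adj}_g\mathfrak h'=\mathfrak h'$ for all $g\in G$, and differentiating gives $[\mathfrak g,\mathfrak h']\subseteq\mathfrak h'$; the statement for $\mathfrak h'^{\rm rec}$ follows by applying ${\bf i}_*$. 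For $(ii)$, the kernel of $G\to\mathrm{GL}(\mathfrak g/\mathfrak h')$ is an algebraic subgroup with Lie algebra $\{X\in\mathfrak g:[X,\mathfrak g]\subseteq\mathfrak h'\}$, which contains $\mathfrak h'$ because $\mathfrak h'$ is already an ideal; so it equals $G$ and $[\mathfrak g,\mathfrak g]\subseteq\mathfrak h'$, i.e.\ $\mathfrak g/\mathfrak h'$ is abelian. Since $\mathfrak h$ is a subalgebra complementary to the ideal $\mathfrak h'$, the projection $\mathfrak g\to\mathfrak g/\mathfrak h'$ restricts to a Lie algebra isomorphism $\mathfrak h\xrightarrow{\ \sim\ }\mathfrak g/\mathfrak h'$; thus $\mathfrak h$ is abelian and $H^\circ$ is abelian (in fact a torus, $\mathfrak g$ being the algebraic hull of $\mathfrak h'$). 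For $(iii)$, under the identification of $\mathfrak g^{\rm rec}$ with $\mathfrak g$ by evaluation at $e$ the left-translation action of $G$ on right-invariant fields is exactly $\operatorname{Adj}$; it preserves $\mathfrak h'^{\rm rec}$ by $(i)$, and being an automorphism of the Lie algebra $\mathfrak g^{\rm rec}$ it restricts to an automorphism of the subalgebra $\mathfrak h'^{\rm rec}$, which gives the morphism $\overline{\operatorname{Adj}}\colon G\to\operatorname{Aut}(\mathfrak h'^{\rm rec})$.

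\emph{Item $(iv)$.} The connection $\nabla^{\rm rec}$ is a Lie connection of type $\mathfrak h'^{\rm rec}$, so the previous proposition already gives $\operatorname{Gal}(\nabla^{\rm rec})\subseteq\operatorname{Aut}(\mathfrak h'^{\rm rec})$, and $\operatorname{Gal}(\nabla^{\rm rec})$ is the stabiliser of the Zariski closure $\overline{\Lc}$ of a leaf $\Lc$ of the induced foliation on $R^1(TM)$. By Lemma~\ref{Lemma1} the $\nabla^{\rm rec}$-horizontal sections on $M$ are the local infinitesimal symmetries of $\pi_*|_{\mathfrak h'}$, i.e.\ the fields commuting with all $\pi_*(\widetilde A_i)$ ($A_i$ a basis of $\mathfrak h'$, $\widetilde A_i$ left-invariant on $G$). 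I claim they are the pushforwards along local sections of $\pi$ of fields of the form $f_v\cdot R_v$ on $G$, where $R_v$ is the right-invariant field with $R_v(e)=v\in\mathfrak h'$: indeed $R_v$ commutes with every left-invariant field, so $f_vR_v$ commutes with $\widetilde A_1,\dots,\widetilde A_r$ iff $\widetilde A_if_v=0$ for all $i$, i.e.\ $f_v$ is constant along the right-$H'$-orbits; and, $\mathfrak h'$ being an $\operatorname{Adj}_H$-stable ideal, $f_vR_v$ descends to $M$ iff it is $L_H$-invariant modulo the vertical bundle, which forces $v$ to be a common eigenvector $\operatorname{Adj}_hv=\chi_v(h)v$ and $f_v$ to be left-$H$-semi-invariant of weight $\chi_v$ — a condition that, together with right-$H'$-invariance, determines $f_v$ up to a scalar (because $\operatorname{Adj}_g\mathfrak h'=\mathfrak h'$ makes $\mathfrak h\cap\operatorname{Adj}_g\mathfrak h'=0$ for all $g$, so $H\backslash G/H'$ is finite). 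Since $H^\circ$ is a torus, $\mathfrak h'$ has a basis $v_1,\dots,v_r$ of such eigenvectors; the fields $\vec Y_k:=\pi_*(f_{v_k}R_{v_k})$ are then $r$ linearly independent $\nabla^{\rm rec}$-horizontal sections, hence a basis of the type, and in this basis $\operatorname{Adj}(h)=\operatorname{diag}(\chi_{v_1}(h),\dots,\chi_{v_r}(h))$.

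Now a Galois automorphism scales $f_{v_k}\mapsto c_kf_{v_k}$, so $\operatorname{Gal}(\nabla^{\rm rec})$ embeds in $(\mathbf C^*)^r$ as the set of $(c_k)$ compatible with every relation $\prod_kf_{v_k}^{m_k}\in\mathbf C(M)$. But such a monomial is right-$H'$-invariant and left-$H$-semi-invariant of weight $\prod_k\chi_{v_k}^{m_k}$, and any right-$H'$-invariant rational function on $G$ is constant because $G=\overline{H'}$; hence $\prod_kf_{v_k}^{m_k}\in\mathbf C(M)$ exactly when $\prod_k\chi_{v_k}^{m_k}=1$ identically. Therefore $\operatorname{Gal}(\nabla^{\rm rec})=\{(c_k):\prod_kc_k^{m_k}=1\text{ whenever }\prod_k\chi_{v_k}^{m_k}=1\}$, which is precisely the Zariski closure of $\operatorname{Adj}(H)=\{(\chi_{v_1}(h),\dots,\chi_{v_r}(h)):h\in H\}$ in $(\mathbf C^*)^r$, i.e.\ $\overline{\operatorname{Adj}}(H)\subseteq\operatorname{Aut}(\mathfrak h'^{\rm rec})$ (the component group $H/H^\circ$ is treated the same way and yields the common finite part). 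Finally $\overline{\operatorname{Adj}}(H)$ is an algebraic group containing $\operatorname{Adj}(H^\circ)$, a quotient of the connected abelian group $H^\circ$, as a finite-index subgroup, so it is virtually abelian.

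The hard part is the explicit description in $(iv)$: one must verify that the pushforwards of the $f_vR_v$ really are $\nabla^{\rm rec}$-horizontal and descend (the bracket identity $[\widetilde A_i,f_vR_v]=0$ together with $\pi$-projectability), that they exhaust the horizontal sections — settled by the dimension count, since there are $r$ independent ones and the type is $\mathfrak h'^{\rm rec}$ — and, in the Galois computation, the bookkeeping relating the coefficients of the $\vec Y_k$ over $\mathbf C(M)$ to the $f_{v_k}$ and the step excluding extra algebraic relations. The decisive input for the last point is again the minimality of $G$, i.e.\ $G=\overline{H'}$; without it (for instance if $\mathfrak h'$ were already algebraic) the Galois group could be a proper subgroup of $\overline{\operatorname{Adj}}(H)$.
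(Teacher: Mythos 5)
Your items (i)--(iii) are correct, and in fact your stabiliser argument (the stabiliser of $\mathfrak h'$ in $G$ under $\operatorname{Adj}$, then the kernel of the induced action on $\mathfrak g/\mathfrak h'$) is just an inline re-derivation of the inclusion $[\mathfrak g,\mathfrak g]\subseteq\mathfrak h'$ that the paper imports from Lemma~\ref{ap2_2}; from that point on (i)--(iii) coincide with the paper's proof.

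Item (iv) has a genuine gap. Your computation rests on the claims that $H^{\circ}$ is a torus and that $\mathfrak h'$ has a basis of common $\operatorname{Adj}_H$-eigenvectors, so that $\operatorname{Gal}(\nabla^{\rm rec})$ sits diagonally in $(\mathbf C^*)^r$ and can be pinned down by monomial relations among characters. Statement (ii) only gives that $\mathfrak h$ is \emph{abelian}; $H$ may be unipotent and act on $\mathfrak h'$ non-semisimply, and being the algebraic hull of $\mathfrak h'$ does not prevent this, because the hull splits off nilpotent Jordan parts as well as toral ones. Concretely, inside $\mathrm{GL}_3(\mathbf C)$ take $A_1=\alpha\,\mathrm{diag}(1,1,0)+E_{12}$, $A_2=E_{13}$, $A_3=E_{23}$ with $\alpha\neq 0$, so that $\mathfrak h'=\langle A_1,A_2,A_3\rangle$ satisfies $[A_1,A_2]=\alpha A_2$, $[A_1,A_3]=A_2+\alpha A_3$, $[A_2,A_3]=0$. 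The algebraic hull is $\mathfrak g=\langle S,N,A_2,A_3\rangle$ with $S=\mathrm{diag}(1,1,0)$, $N=E_{12}$, and $\mathfrak h=\langle N\rangle$ is a supplementary subalgebra which is the Lie algebra of the algebraic subgroup $H=\exp(\mathbf C N)\cong\mathbf G_a$. Since $\operatorname{ad}_N A_3=A_2$ and $\operatorname{ad}_N A_1=\operatorname{ad}_N A_2=0$, the group $\overline{\operatorname{Adj}}(H)$ is a nontrivial one-dimensional \emph{unipotent} subgroup of $\operatorname{Aut}(\mathfrak h')$; there is no eigenbasis, no characters $\chi_{v_k}$, and the Galois group $\mathbf G_a$ does not embed in a diagonal torus. (The proposition itself survives: $\mathbf G_a$ is abelian.) A secondary symptom of the same problem is your ansatz that each horizontal section is a single function times a single right-invariant field $f_vR_v$: since $[A_i,f_vR_v]=(A_if_v)R_v$ and the right $H'$-orbits are Zariski dense, a rational such $f_v$ is constant, so the nonconstant solutions are genuinely matrix-valued combinations $\sum_j f_{ij}A_j$ whose coefficient matrix need not diagonalise.

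The paper's proof of (iv) avoids this entirely and is the part you are missing: $\pi\colon G\to M$ is a principal $H$-bundle and the extension $\mathbf C(M)\subseteq\mathbf C(G)$ has no new constants, hence is strongly normal with group $H$; writing a basis $B_i$ of $\mathfrak h'^{\rm rec}$ as $B_i=\sum_j f_{ij}A_j$ with $f_{ij}\in\mathbf C[G]$ produces horizontal sections of $\nabla^{\rm rec}$ defined over $\mathbf C(G)$, so the Picard--Vessiot field is $\mathbf L=\mathbf C(M)(f_{ij})\subseteq\mathbf C(G)$; the Galois correspondence then gives $\operatorname{Gal}(\nabla^{\rm rec})=H/K$, and $K=\ker\overline{\operatorname{Adj}}$ because $L_{h*}B_i=\sum_j h(f_{ij})A_j$ and the coefficients of $B_i$ in the left-invariant frame are unique. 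If you want to salvage your argument, replace your eigenvector basis by this fundamental matrix $(f_{ij})$; the diagonal bookkeeping you carried out is the special case where $H$ is a torus.
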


\begin{proof}We have that $\mathfrak g$ is the algebraic hull of $\mathfrak h'$. From Lemma~\ref{ap2_2} in Appendix~\ref{apB} we obtain $[\mathfrak g,\mathfrak g] \subseteq \mathfrak h'$. Statement~(i) follows straightforwardly. Let us consider $A$ and $B$ in $\mathfrak h$. Then $[A,B]$ is in $\mathfrak h$ and also in $\mathfrak h'$ by the previous argument. Thus, $[A,B]=0$ and this f\/inishes the proof of statement~(ii). Let us denote by $H'$ the subgroup of $G$ spanned by the image of $\mathfrak h'$ by the exponential map. For each element $h\in H'$, the adjoint action of $h$ preserves the Lie algebra~$\mathfrak h'$. By continuity of the adjoint action in the Zariski topology, we have that~$\mathfrak h'$ is preserved by the adjoint action of all elements of~$G$. This proves statement~(iii). In order to prove the last statement in the proposition we have to construct a Picard--Vessiot extension for the connec\-tion~$\nabla^{\rm rec}$. Let us consider a basis $\{A_1,\ldots, A_m\}$ of $\mathfrak h'$ and let $\bar A_i$ be the projection~$\pi_*(A_i)$. We have an extension of dif\/ferential f\/ields
\begin{gather*}(\mathbf C(M), \bar{\mathcal D}) \subseteq (\mathbf C(G), \mathcal D),\end{gather*}
where $\bar{\mathcal D}$ stands for the $\mathbf C(M)$-vector space of derivations spanned by $\bar A_1,\ldots, \bar A_m$ and $\mathcal D$ stands for the $\mathbf C(G)$-vector space of derivations spanned by $A_1,\ldots,A_m$ (see Appendix~\ref{ApA} for our conventions on dif\/ferential f\/ields).

The projection $\pi$ is a principal $H$-bundle. Any rational f\/irst integral of $\{A_1,\ldots,A_m\}$ is constant along $H'$ and thus it is necessarily a complex number. Thus, the above extension has no new constants and it is strongly normal in the sense of Kolchin, with Galois group $H$. Note that the dif\/ferential f\/ield automorphism corresponding to an element $h\in H$ is the pullback of functions by the left translation $L_{h}^{-1}$, that is, $(hf)(g) = f\big(h^{-1}g\big)$.

The horizontal sections for the connection $\nabla^{\rm rec}$ are characterized by the dif\/ferential equations
\begin{gather}\label{eq_proof_H}
[\bar A_i, X] = 0.
\end{gather}
Let us consider $\{B_1,\ldots,B_m\}$ a basis of $\mathfrak h'^{\rm rec}$. From the Zariski closedness of $H$ in~$G$ it follows that there are regular functions $f_{ij}\in\mathbf C[G]$ such that $B_i = \sum\limits_{j=1}^m f_{ij} A_j$. Thus let us def\/ine $\bar B_i = \sum\limits_{j=1}^m f_{ij} \bar A_j$. Those objects are vector f\/ields in $M$ with coef\/f\/icients in $\mathbf C[G]$, and clearly satisfy equation~\eqref{eq_proof_H}. Thus, the Picard--Vessiot extension of $\nabla^{\rm rec}$ is spanned by the functions~$f_{ij}$ and it is embedded, as a dif\/ferential f\/ield, in~$\mathbf C(G)$. Let us denote such extension by $\mathbf L$. We have a~chain of extensions
\begin{gather*}\mathbf C(M) \subseteq \mathbf L \subseteq \mathbf C(G).\end{gather*}
By Galois correspondence, the Galois group of $\nabla^{\rm rec}$ is a quotient $H/K$ where $K$ is the subgroup of elements of $H$ that f\/ix, by left translation, the functions $f_{ij}$. In order to prove statement~(iv) we need to check that this group $K$ is the kernel of the morphism $\overline{\operatorname{Adj}}$.

Let us note that the image under the adjoint action by $g\in G$ of an element $B\in \mathfrak h'^{\rm rec}$ is given by the left translation, $\overline{\operatorname{Adj}}(g)(B) = L_{g*}(B)$. This transformation makes sense for any derivation of $\mathbf C[G]$, and thus we have an action of~$G$ on $\mathfrak X(G)$. Let us take $h$ in the kernel of $\overline{\operatorname{Adj}}$, thus, $\overline{\operatorname{Adj}}(h)(B_j) = B_j$ for any index~$j$. Applying the transformation $L_{h*}$ to the expression of~$B_i$ as linear combination of the left invariant vector f\/ields~$A_j$ we obtain $B_i = \sum\limits_{j=1}^m L_{h*}(f_{ij} A_j) = \sum\limits_{j=1}^m h(f_{ij})A_j$. The coef\/f\/icients of~$B_i$ as linear combination of the $A_j$ are unique, and thus, $h(f_{ij}) = f_{ij}$ we conclude that~$h$ is an automorphism f\/ixing $\mathbf L$. On the other hand, let us take $h\in H$ f\/ixing $\mathbf L$. Then $L_{h*}\big(\sum f_{ij}A_j\big) = \sum f_{ij} A_j$ thus $\overline{\operatorname{Adj}}(h)(B_i) = B_i$ and then $h$ is in the kernel of $\overline{\operatorname{Adj}}$.
\end{proof}

\subsection[Some examples of $\mathfrak{sl}_2$-parallelisms]{Some examples of $\boldsymbol{\mathfrak{sl}_2}$-parallelisms}\label{sl2}

We will construct some parallelized varieties as subvarieties of the arc space of the af\/f\/ine line~${\mathbb A}^1_\mathbf C$. This family of
examples show how to realize every subgroup of ${\rm PSL}_2(\mathbf C)$ as the Galois group of the reciprocal Lie connection.

\subsubsection{The arc space of the af\/f\/ine line and its Cartan 1-form} \label{sl2parallelisms}

In our special case, the arc space of the af\/f\/ine line ${\mathbb A}^1_\mathbf C$ with af\/f\/ine coordinate $z$, is the space of all formal power series $\widehat{z} = \sum z^{(i)} \frac{x^i}{i !}$. It will be denoted by ${\mathscr{L}}$, its ring of regular functions is $\mathbf C[ {\mathscr{L}}] = \mathbf C\big[z^{(0)},z^{(1)}, z^{(2)},\ldots \big]$. For an open subset $U\subset \mathbf C$ one denotes by~${\mathscr{L}}U$ the set of power series $\widehat z$ with $z^{(0)} \in U$.

A biholomorphism $f\colon U \to V$ between open sets of $\mathbf C$ can be lift to a biholomorphism $ {\mathscr{L}}f\colon {\mathscr{L}}U \to {\mathscr{L}}V$ by composition $\widehat{z} \to f\circ\widehat{z}$.

Let $\widehat{\mathfrak X}$ be the Lie algebra of formal vector f\/ields $\mathbf C[[x]]\frac{\partial}{\partial x}$. One can build a rational form $\sigma\colon T{\mathscr{L}} \to \widehat{\mathfrak X}$ in following way (see \cite[Section~2]{guillemin-sternberg}). Let $v = \sum a_i \frac{\partial}{\partial z^{(i)}}$ be a tangent vector at the formal coordinate $\widehat{p}$, i.e., an arc in the Zariski open subset $\{z^{(1)} \not = 0\}$. The local coordinate $\widehat{p}$ can be used to have formal coordinates $p_0,p_{1}, p_{2}, \ldots $, on $\mathscr{L}$ and $v$ can be written $v = \sum b_i \frac{\partial}{\partial p_{i}}$. The form~$\sigma$ is def\/ined by $\sigma(v) = \sum b_i \frac{x^i}{i !} \frac{\partial}{\partial x} $. This form is rational and is an isomorphism between~$T_p {\mathscr{L}}$ and $\widehat{\mathfrak X}$ satisfying $d\sigma = - \frac{1}{2}[\sigma, \sigma]$ and $({\mathscr{L}}f)^\ast \sigma = \sigma$ for any biholomorphism~$f$.

This means that $\sigma$ provides an action of $\widehat{\mathfrak X}$ commuting with the lift of biholomorphisms. This form seems to be a~coparallelism but it is not compatible with the natural structure of pro-variety of~${\mathscr{L}}$ and~$\widehat{\mathfrak X}$: $\sigma^{-1}\big(\frac{\partial}{\partial x}\big) = \sum\limits_{i \geq 0} z^{(i+1)}\frac{\partial}{\partial z^{(i)}}$ is a derivation of degree $+1$ with respect to the pro-variety structure of ${\mathscr{L}}$. The total derivation above will be denoted by~$E_{-1}$. This gives a dif\/ferential structure to the ring~$\mathbf C[ {\mathscr{L}}]$.

\subsubsection{The parallelized varieties}

Let $\nu\in \mathbf C(z)$ be a rational function, $f$ be the rational function on the arc space given by the Schwarzian derivative
\begin{gather*} f\big(z^{(0)},z^{(1)},z^{(2)},z^{(3)}\big) = \frac{z^{(3)}}{z^{(1)}}-\frac{3}{2} \left(\frac{z^{(2)}}{z^{(1)}}\right) ^2+ \nu\big(z^{(0)}\big)\big(z^{(1)}\big)^2,\end{gather*}
and $I \subset \mathbf C[{\mathscr L}]$ be the $E_{-1}$-invariant ideal generated by $p\big(z^{(0)}\big) {z^{(1)}}^2f\big(z^{(0)},z^{(1)},z^{(2)},z^{(3)}\big)$ where $p$ is a minimal denominator of~$\nu$.

\begin{Lemma} The zero set $V$ of $I$ is a dimension $3$ subvariety of ${\mathscr{L}}$ and $\omega (TV) = {\mathfrak{sl}}_2(\mathbf C) \subset \widehat{\mathfrak X}$. This provides a ${\mathfrak{sl}}_2$-parallelism on~$V$.
\end{Lemma}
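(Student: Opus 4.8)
The plan is to unwind the meaning of the equation $f = 0$ in terms of the Cartan form $\sigma$ and to recognize $\mathfrak{sl}_2(\mathbf{C})$ as the Lie subalgebra of $\widehat{\mathfrak X}$ cut out by exactly this condition. Recall that $\widehat{\mathfrak X} = \mathbf{C}[[x]]\frac{\partial}{\partial x}$ contains the three dimensional Lie algebra $\mathfrak{sl}_2(\mathbf C) = \langle \frac{\partial}{\partial x},\, x\frac{\partial}{\partial x},\, x^2\frac{\partial}{\partial x}\rangle$; a vector field $g(x)\frac{\partial}{\partial x}\in\widehat{\mathfrak X}$ lies in $\mathfrak{sl}_2(\mathbf C)$ if and only if $g$ is a polynomial of degree at most $2$. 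The first step is therefore to compute, at an arc $\widehat p$ with $z^{(1)}\neq 0$, what $\sigma$ does to the tangent space $T_{\widehat p}\mathscr L$ and to pull back the condition ``$\sigma(v)$ has vanishing Taylor coefficients in degrees $\geq 3$'' to equations on the jet coordinates $z^{(i)}$. Using the formula $\sigma(v) = \sum b_i \tfrac{x^i}{i!}\tfrac{\partial}{\partial x}$ in the adapted coordinates $p_i$, and the change of coordinates between the $p_i$ and the $z^{(i)}$ along the leaf, I would show that the first genuinely new equation — the one in degree $3$ — is precisely $f(z^{(0)},z^{(1)},z^{(2)},z^{(3)})=0$ after clearing the denominator $p(z^{(0)}){z^{(1)}}^2$, and that all higher-degree equations ($E_{-1}$-derivatives of $f$) are consequences of this one together with its total derivatives; this is exactly the statement that $I$ is the $E_{-1}$-invariant ideal generated by $p(z^{(0)}){z^{(1)}}^2 f$. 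This identifies $V\cap\{z^{(1)}\neq 0\}$ set-theoretically with the locus where $\sigma$ takes values in $\mathfrak{sl}_2(\mathbf C)$.

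Next I would check that $V$ has dimension $3$. The generator of $I$ together with its $E_{-1}$-derivatives $E_{-1}^k(z^{(1)2}f)$ for $k\geq 0$ cut out, in the pro-variety $\mathscr L$, a sub-pro-variety whose coordinate ring is a polynomial ring in $z^{(0)},z^{(1)},z^{(2)}$ (the equation $f=0$ solves $z^{(3)}$ rationally in terms of the lower jets on $\{z^{(1)}\neq 0\}$, and each successive $E_{-1}$-derivative solves the next $z^{(i)}$); hence $V$ is irreducible of dimension $3$. Equivalently, one uses that $V$ is the closure of an orbit: the group $\mathrm{PSL}_2(\mathbf C)$ (and more generally the pseudogroup of Möbius transformations twisted by $\nu$) acts on arcs, the Schwarzian $f$ is its invariant characterizing the orbit, and the orbit of a generic arc is $3$-dimensional.

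The remaining point is that $\sigma|_{TV}$ is a coparallelism of type $\mathfrak{sl}_2(\mathbf C)$. By the previous paragraph, at a regular point $\sigma$ maps $T_{\widehat p}V$ into $\mathfrak{sl}_2(\mathbf C)$, and since $\dim T_{\widehat p}V = 3 = \dim\mathfrak{sl}_2(\mathbf C)$ and $\sigma$ is injective (it is an isomorphism onto $\widehat{\mathfrak X}$), it is a linear isomorphism $T_{\widehat p}V\xrightarrow{\sim}\mathfrak{sl}_2(\mathbf C)$, which is condition (1) of the definition of coparallelism. Condition (2) — compatibility with brackets — is inherited directly from the Maurer–Cartan equation $d\sigma = -\tfrac12[\sigma,\sigma]$ satisfied by $\sigma$ on all of $\mathscr L$: restricting this identity to $V$ and using that $\sigma$ is an isomorphism in each fiber shows $\omega[X,Y] = [\omega(X),\omega(Y)]$ for vector fields tangent to $V$, exactly as in the derivation of the structure equations in Section~\ref{section_parallelisms}. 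Writing $\omega$ for $\sigma|_{TV}$ then gives the asserted $\mathfrak{sl}_2$-parallelism on (a Zariski-open subset of) $V$.

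The main obstacle I expect is the bookkeeping in the first step: verifying rigorously that the single generator $p(z^{(0)}){z^{(1)}}^2 f$, together with its $E_{-1}$-iterates, really does define the full pullback of the ``$\deg\leq 2$'' condition — i.e., that no lower-degree obstruction is missed and that the degree-$\geq 3$ conditions are all captured. This amounts to a careful analysis of how the Cartan form interacts with the reparametrization of arcs and of the role of $\nu$ (which twists the action from the honest Möbius group to a Schwarzian-type equation); the identity $({\mathscr L}f)^\ast\sigma = \sigma$ and the degree-$(+1)$ nature of $E_{-1}=\sigma^{-1}(\partial/\partial x)$ are the tools that make this work, but assembling them into a clean argument is where the real effort lies.
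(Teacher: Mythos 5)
Your second and third steps are essentially sound: the dimension count (each generator $E_{-1}^n\cdot f$ of $I$ solves $z^{(n+3)}$ rationally in terms of the lower jets on $\{p(z^{(0)})z^{(1)}\neq 0\}$, so $z^{(0)},z^{(1)},z^{(2)}$ are \'etale coordinates on $V$) and the derivation of the coparallelism axioms from $\sigma$ being a fibrewise isomorphism together with ${\rm d}\sigma=-\frac12[\sigma,\sigma]$ are both what is needed. The gap is in your first step, and it is a genuine one. The condition ``$\sigma(v)$ has vanishing Taylor coefficients in degrees $\geq 3$'' is a \emph{linear condition on tangent vectors}: it cuts out a $3$-dimensional subspace of $T_{\widehat p}{\mathscr{L}}$ at \emph{every} $\widehat p$ with $z^{(1)}\neq 0$, namely the span of $E_{-1}$, $E_0$, $E_1$. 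There is no ``locus of points where $\sigma$ takes values in $\mathfrak{sl}_2(\mathbf C)$'': the preimage $\sigma^{-1}(\mathfrak{sl}_2(\mathbf C))$ is a rank-$3$ integrable distribution on all of $\{z^{(1)}\neq 0\}$, with infinitely many $3$-dimensional integral leaves (the orbits of precomposition by M\"obius transformations), whereas $f=0$ and its $E_{-1}$-derivatives are conditions on the \emph{point} $\widehat p$ singling out particular leaves. Pulling back the degree-$\geq 3$ condition therefore cannot produce the equation $f=0$, and your first step cannot identify $V$.

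What is actually required, and what your proposal never verifies, is that $V$ is \emph{tangent} to this distribution, i.e., that $E_0=\sigma^{-1}\big(x\frac{\partial}{\partial x}\big)$ and $E_1=\sigma^{-1}\big(x^2\frac{\partial}{\partial x}\big)$ preserve the ideal $I$ (for $E_{-1}$ this holds by construction of $I$). This is the short computation $E_0\cdot f=2f$ and $E_1\cdot f=0$, propagated to all generators $E_{-1}^n\cdot f$ by the commutation relations of $\mathfrak{sl}_2$; it is exactly here that the specific shape of $f$ --- Schwarzian derivative plus the term $\nu\big(z^{(0)}\big)\big(z^{(1)}\big)^2$, a relative invariant of weight $2$ under reparametrization of the source --- is used, and a generic polynomial in $z^{(0)},\ldots,z^{(3)}$ would fail this test. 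Your parenthetical remark that $V$ is ``the closure of an orbit'' of the M\"obius reparametrization action is the global form of precisely this missing statement (via the cocycle property of the Schwarzian), but it is the thing to be proved, not an input. Once tangency is established, the three fields are independent along $V$ (their determinant in the coordinates $z^{(0)},z^{(1)},z^{(2)}$ is $2\big(z^{(1)}\big)^3$), so they span $TV$ and $\sigma(TV)=\mathfrak{sl}_2(\mathbf C)$ as claimed.
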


\begin{proof} One can compute explicitly this parallelism using $z^{(0)}$, $z^{(1)}$ and $z^{(2)}$ as \'etale coordinates on a Zariski open subset of $V$. Let us f\/irst compute the $\mathfrak{sl}_2$ action on ${\mathscr{L}}$. The standard inclusion of $\mathfrak{sl}_2$ in $\widehat{\mathfrak X}$ is given by $E_{-1} = \frac{\partial}{\partial x}$, $E_0 = x\frac{\partial}{\partial x}$ and $E_{1} = x^2\frac{\partial}{\partial x}$. Their actions on ${\mathscr{L}}$ are given by $E_{-1} = \sum z^{(i+1)}\frac{\partial}{\partial z^{(i)}}$, $E_0 = \sum iz^{(i)}\frac{\partial}{\partial z^{(i)}}$ and $E_1= \sum i(i-1) z^{(i-1)}\frac{\partial}{\partial z^{(i)}}$. The ideal $I$ is generated by the functions $E_{-1}^n\cdot f$. By def\/inition $E_{-1}\cdot f \in I$, a direct computation gives that $E_0\cdot f = 2f \in I$, $E_1\cdot f = 0 \in I$. The relations in $\mathfrak{sl}_{2}$ give that $E_{-1}\cdot I \subset I$, $E_0\cdot I \subset I$ and $E_1\cdot I \subset I$, i.e., the vector f\/ields $E_{-1}$, $E_0$ and $E_1$ are tangent to~$V$.
\end{proof}

Now parameterizing $V$ by $z^{(0)}$, $z^{(1)}$ and $z^{(2)}$ one gets
\begin{gather*}
 E_{-1}|_{\mathbf C ^3} = z^{(1)}\frac{\partial}{\partial z^{(0)}} + z^{(2)}\frac{\partial}{\partial z^{(1)}} + \left(-\nu\big(z^{(0)}\big)\big(z^{(1)}\big)^3 + \frac{3}{2}\frac{(z^{(2)})^2}{z^{(1)}}\right) \frac{\partial}{\partial z^{(2)}},\nonumber\\
 E_0|_{\mathbf C^3} = z^{(1)}\frac{\partial}{\partial z^{(1)}} + 2 z^{(2)}\frac{\partial}{\partial z^{(2)}}, \qquad E_1|_{\mathbf C^3} = 2 z^{(1)}\frac{\partial}{\partial z^{(2)}}.\label{parrallel}
\end{gather*}
They form a rational $\mathfrak{sl}_{2}$-parallelism on $\mathbf C^3$ depending on the choice of a rational function in one variable.

\subsubsection{Symmetries and the Galois group of the reciprocal connection}

\begin{Theorem}\label{thm_SL2}Any algebraic subgroup of ${\rm PSL}_{2}(\mathbf C)$ can be realized as the Galois group of the reciprocal connection of a~parallelism of~$\mathbf C^3$.
\end{Theorem}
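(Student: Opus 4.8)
The plan is to identify $\operatorname{Gal}(\nabla^{\rm rec})$ with the Zariski closure of the projective monodromy group of the second order linear equation $L_\nu\colon y'' + \frac{1}{2}\nu\, y = 0$ attached to the chosen $\nu$, and then to realise every algebraic subgroup of ${\rm PSL}_2(\mathbf C)$ as such a closure. The equation $f = 0$ defining $V$ is $\{z;x\} + \nu(z)(z')^2 = 0$, where $\{\,\cdot\,;\,\cdot\,\}$ is the Schwarzian derivative; by the inversion rule for the Schwarzian this is the same as $\{x;z\} = \nu(z)$ for the inverse function $x = x(z)$. Hence the arcs of $V$ are jets of the multivalued solutions of $\{x;z\} = \nu(z)$ over $U := \mathbf{CP}_1 \setminus \{\text{poles of }\nu\}$, and these solutions are exactly the germs $\gamma\circ\xi$ with $\xi$ a fixed ratio of two independent solutions of $L_\nu$ and $\gamma \in {\rm PGL}_2(\mathbf C)$; a loop $\ell \in \pi_1(U)$ carries $\xi$ to $\rho(\ell)\circ\xi$, where $\rho\colon \pi_1(U) \to {\rm PGL}_2(\mathbf C)$ is the projective monodromy representation. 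As the explicit formulas preceding the theorem show, $E_{-1}, E_0, E_1$ generate the action of $\mathfrak{sl}_2 \subset \widehat{\mathfrak X}$ reparametrising the source variable $x$; this action commutes with post-composition by biholomorphisms of the target, so by Lemma~\ref{Lemma1} the $\nabla^{\rm rec}$-horizontal vector fields are the infinitesimal symmetries of the $\nu$-projective structure, i.e.\ the lifts to $\mathscr L$ (by post-composition) of $\xi^*\big(\frac{\partial}{\partial x}\big)$, $\xi^*\big(x\frac{\partial}{\partial x}\big)$, $\xi^*\big(x^2\frac{\partial}{\partial x}\big)$ restricted to $V$. These are multivalued, and transport along $\ell$ conjugates them by $\operatorname{Adj}(\rho(\ell))$. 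By the Picard--Vessiot theory of Appendix~\ref{ApA}, the Zariski closure of a leaf of the horizontal foliation of $\nabla^{\rm rec}$ on $R^1(TV)$ through the canonical point is a torsor under the Zariski closure of $\operatorname{Adj}(\rho(\pi_1(U)))$ in $\operatorname{Aut}(\mathfrak{sl}_2) = {\rm PGL}_2(\mathbf C) = {\rm PSL}_2(\mathbf C)$, so $\operatorname{Gal}(\nabla^{\rm rec})$ is the Zariski closure of the projective monodromy of $L_\nu$; for $\nu$ with at worst double poles, $L_\nu$ is fuchsian and this closure equals the projective differential Galois group of $L_\nu$.

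It then remains to choose, for each algebraic subgroup $\Gamma \subseteq {\rm PSL}_2(\mathbf C)$, a rational $\nu$ for which this projective monodromy group has Zariski closure conjugate to $\Gamma$; this is classical. For $\Gamma = {\rm PSL}_2(\mathbf C)$ take a Fuchsian (e.g.\ generic hypergeometric) $\nu$ with Zariski-dense monodromy. For a maximal torus take $\nu(z) = \frac{1-\lambda^2}{2z^2}$ with $\lambda \notin \mathbf Q$, so that $\xi(z) = z^\lambda$ and the monodromy is $z \mapsto e^{2\pi i\lambda}z$; for the normaliser of a maximal torus add a third regular singular point, producing the extra involution $x \mapsto 1/x$. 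For the one-dimensional unipotent subgroup take $\nu(z) = \frac{1}{2z^2}$, so $\xi(z) = \log z$ and the monodromy is $x \mapsto x + 2\pi i$; for the intermediate groups $\mathbb{G}_a \subset \mathbb{G}_a\cdot\mu_n \subset B$, with $B$ a Borel subgroup, take suitable variants with an extra singular point. For the finite subgroups $C_n$, $D_n$, $A_4$, $S_4$, $A_5$, take the hypergeometric equations of Schwarz's list, whose projective monodromy is finite of precisely the prescribed isomorphism type. In each case, restricting the parallelism constructed just before the theorem to the appropriate Zariski open subset of $\mathbf C^3$ (in the chart $z^{(0)}, z^{(1)}, z^{(2)}$) yields an $\mathfrak{sl}_2$-parallelism of $\mathbf C^3$ with $\operatorname{Gal}(\nabla^{\rm rec}) = \Gamma$.

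The substantive point is the identification in the first step: matching the Picard--Vessiot extension of $\nabla^{\rm rec}$ on the $3$-fold $V$ with the projectivisation of the Picard--Vessiot extension of $L_\nu$ over the curve $U$. One must verify that the fibre coordinates $z^{(1)}, z^{(2)}$ add nothing transcendental — they are rational in $\xi', \xi''$ over $\mathbf C(z^{(0)})$ — identify the leaf of the horizontal foliation through the canonical point with the graph of $\operatorname{Adj}\circ\rho$, and keep $\nu$ fuchsian so that ``Galois group'' and ``Zariski closure of the monodromy'' coincide. A secondary, purely bookkeeping matter is to cover the non-connected small groups ($D_n$, $\mathbb{G}_a\cdot\mu_n$, the torus normaliser): this is exactly where one enlarges the obvious equation by an additional singular point in order to adjoin the required finite or involutive factor.
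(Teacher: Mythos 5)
Your proposal is correct and follows essentially the same route as the paper: the paper likewise reduces $\operatorname{Gal}(\nabla^{\rm rec})$ to the projective Galois group of a second-order equation attached to $\nu$ --- it identifies the $\nabla^{\rm rec}$-horizontal fields with the prolongations $\mathscr{L}\big(a\,\frac{\partial}{\partial z}\big)$ for $a$ a solution of the symmetric-square equation $a'''+2\nu a'+\nu' a=0$ --- and then realizes every algebraic subgroup of ${\rm PSL}_2(\mathbf C)$ by hypergeometric choices of $\nu$ exactly as you do. The only divergence is that the paper works directly with the differential Galois group of that third-order equation (so it needs neither the Fuchsian restriction nor Schlesinger's density theorem to pass from monodromy closure to Galois group), and your normalization $y''+\frac{1}{2}\nu y=0$ is in fact the correct symmetric-square companion of the paper's equation.
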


\begin{proof}A direct computation shows that $z\mapsto \varphi(z)$ is an holomorphic function satisfying
\begin{gather*} \frac{\varphi'''}{\varphi'}-\frac{3}{2} \left(\frac{\varphi''}{\varphi'}\right)^2 + \nu(\varphi)(\varphi')^2 = \nu(z),
\end{gather*}
if and only if its prolongation ${\mathscr{L}}\varphi\colon \widehat{z} \mapsto \varphi (\widehat{z})$ on the space ${\mathscr{L}}$ preserves~$V$ and preserves each of the vector f\/ields~$E_{-1}$,~$E_0$ and~$E_1$.

Taking inf\/initesimal generators of this pseudogroup, one gets for any local analytic solution of the linear equation
\begin{gather}\label{lin}
a''' + 2 \nu a' +\nu 'a =0,
\end{gather}
a vector f\/ield $X = a(z)\frac{\partial}{\partial z}$ whose prolongation on ${\mathscr{L}}$ is
\begin{gather*}
{\mathscr{L}}X = a\big(z^{(0)}\big)\frac{\partial}{\partial z^{(0)}} + a'\big(z^{(0)}\big)z^{(1)}\frac{\partial}{\partial z^{(1)}} + \big( a''\big(z^{(0)}\big)\big(z^{(1)}\big)^2 + a'\big(z^{(0)}\big) z^{(2)}\big)\frac{\partial}{\partial z^{(2)}} + \cdots.
\end{gather*}
The equation (\ref{lin}) ensures that ${\mathscr{L}}{X}$ is tangent to $V$. The invariance of $\sigma$, $({\mathscr{L}}X)_{\ast}\sigma =0$, ensures that ${\mathscr{L}}X$ commutes with the ${\mathfrak{sl}}_{2}$-parallelism given above. This means that for any solution~$a$ of~(\ref{lin}) the vector f\/ield
\begin{gather*} a\big(z^{(0)}\big)\frac{\partial}{\partial z^{(0)}} + a'\big(z^{(0)}\big)z^{(1)}\frac{\partial}{\partial z^{(1)}}+ \big( a''\big(z^{(0)}\big)\big(z^{(1)}\big)^2 + a'\big(z^{(0)}\big) z^{(2)}\big)\frac{\partial}{\partial z^{(2)}}, \end{gather*}
commutes with $E_{-1}|_{\mathbf C ^3}$, $E_0|_{\mathbf C ^3}$ and $E_{1}|_{\mathbf C ^3}$.

Then the linear dif\/ferential system of f\/lat section for the reciprocal connection reduces to the linear equation~(\ref{lin}). This equation is the second symmetric power of $y'' = \nu(z)y$. If $G \subset {\rm SL}_2(\mathbf C)$ is the Galois group of $y'' = \nu(z)y$ then the image of its second symmetric power representation $s^2\colon G \to \operatorname{Sym}^2(\mathbf C^2)$ is the Galois group of~(\ref{lin}). The kernel of this representation is $\{{\rm Id}, -{\rm Id}\}$ then the Galois group of~(\ref{lin}) is an algebraic subgroup of ${\rm PSL}_{2}(\mathbf C)$.

Let us remark that, as it follows from its def\/inition, the Galois group of an equation contains the monodromy group. Moreover one can determine the monodromy group of classical dif\/ferential equations. Hypergeometric equations depend on three complex numbers $(a,b,c)$
\begin{gather*}
z(1-z) F'' + (c-(a+b+1)z)F' - abF = 0, 
\end{gather*}
and is equivalent to
\begin{gather*}y'' = \nu(\ell, n,m ; z) y,\end{gather*}
with \begin{gather*} \nu(\ell,m,n ;z) = \frac{\big(1-\ell^2\big)}{4z^2} +\frac{1-m^2}{4(1-z)^2} +\frac{1-\ell^2-m^2+n^2}{4 z(1-z)},\end{gather*}
and
\begin{gather*}F = z^{-c/2}(1-z)^{(c-a-b-1)/2} y, \qquad \ell = 1-c,\qquad m =c-a-b,\qquad n=a-b.\end{gather*}
These two equations have the same projectivized Galois group in ${\rm PGL}_2(\mathbf C)$. Any algebraic subgroup of ${\rm PGL}_2(\mathbf C)$ will be realized by an appropriate choice of $(a,b,c)$.
\subsubsection{The whole group}

For $a=b=1/2$, $c = 1$, the hypergeometric equation is the Picard--Fuchs equation of Legendre family. Its monodromy group is $\Gamma(2) \subset {\rm SL}_2(\mathbf Z)$ and is Zariski dense in~${\rm SL}_2(\mathbf C)$.

\subsubsection{The triangular subgroups}
For $b=0$ and $a=-1$ one can compute a basis of solutions of the equation: $1$ and $\int \big( \frac{1-z}{z} \big)^c {\rm d}z$. If $c$ is not rational, the Galois group is the group of invertible matrices $\left[\begin{smallmatrix} u & v\\ 0 & 1 \end{smallmatrix}\right]$. When $c$ is rational then $u$ must be a root of the unity of order the denominators of~$c$. When $c\in \mathbf Z$, the Galois group is the group of matrices $\left[\begin{smallmatrix} 1 & v\\ 0 & 1 \end{smallmatrix}\right]$.

For $b=0 $ and $c= a+1$ a basis of solution is given by $z^{-a}$ and $1$. Its Galois group is a~subgroup of the group of matrices $\left[\begin{smallmatrix} u& 0 \\ 0 & 1 \end{smallmatrix}\right]$. The parameter $a$ is rational if and only if it is a f\/inite subgroup.

\subsubsection{The dihedral subgroups}

For $c=1/2$ and $a+b=0$, a basis of solution is given by $\big(\sqrt{z}+\sqrt{1-z}\big)^a$ and $\big(\sqrt{z}-\sqrt{1-z}\big)^a$. The monodromy group is a~dihedral group in ${\rm GL}_2(\mathbf C)$ whose quotients give dihedral subgroups of ${\rm PGL}_2(\mathbf C)$.

\subsubsection{The tetrahedral subgroup}
This group is the monodromy group of hypergeometric equation for $\ell = 1/3$, $m= 1/2$ and $n=1/3$. A basis of solution is given by \begin{gather*}(z-1)^{-1/12}\Big(\sqrt{3}\big(z^{1/3}+1\big) \pm 2\sqrt{z^{2/3} + z^{1/3} +1} \Big)^{1/4}. \end{gather*}

\subsubsection{The octahedral subgroup}
This group is the monodromy group of hypergeometric equation for $\ell = 1/2$, $m= 1/3$ and $n=1/4$. A basis of solution is given by
\begin{gather*}
(z-1)^{-1/24}\Big[\sqrt{3} \big( \big(\sqrt{z}-1\big)^{1/3} +\big(\sqrt{z}+1\big)^{1/3}\big)^{1/3}\\
\qquad{} \pm 2\sqrt{\big(\sqrt{z}-1\big)^{2/3} + (z-1)^{1/3} +\big(\sqrt{z}+1\big)^{2/3}}\Big]^{1/4}.\\
\end{gather*}

\subsubsection{The icosaedral subgroup}
This group is the monodromy group of hypergeometric equation for $\ell = 1/2$, $m= 1/3$ and $n=1/5$. As icosahedral group is not solvable, the solution space is not described using formulas as simple as in preceding examples.
\end{proof}

\section{Darboux--Cartan connections}\label{section_DC}

\subsection{Connection of parallelism conjugations}\label{DC conjugation}
Let $\omega$ be a rational coparallelism on $M$ of type $\mathfrak g$ and $G$ an algebraic group with Lie algebra of left invariant vector f\/ields $\mathfrak g$ and Maurer--Cartan form $\theta$. Denote by $M^\star$ the open subset of~$M$ in wich $\omega$ is regular. We will study the contruction of conjugating maps between the paralle\-lisms~$(M,\omega)$ and $(G,\theta)$.

Let us consider the trivial principal bundle $\pi\colon P = G\times M \to M$. In this bundle we consider the action of $G$ by right translations $(g, x) * g' = (gg', x)$. Let $\Theta$ be the $\mathfrak g$-valued form $\Theta = \theta - \omega$ in~$P$.

\begin{Definition}\label{DCconnection} The kernel of $\Theta$ is a rational f\/lat invariant connection on the principal bundle $\pi\colon P \to M$. We call it the Darboux--Cartan connection of parallelism conjugations from~$(M,\omega)$ to~$(G,\theta)$.
\end{Definition}

The equation $\Theta = 0$ def\/ines a foliation on $P$ transversal to the f\/ibers at regular points of~$\omega$. The leaves of the foliation are the graphs of analytic parallelism conjugations from~$(M,\omega)$ to~$(G,\theta)$. By means of dif\/ferential Galois theory the Darboux--Cartan connection has a Galois group $\operatorname{Gal}(\Theta)$ with Lie algebra~$\mathfrak{gal}(\Theta)$. The following facts are direct consequences of the def\/inition of the Galois group:
\begin{itemize}\itemsep=0pt
\item[(a)] there is a regular covering map $c\colon (M^\star, \omega) \to (U,\theta)$ with $U$ an open subset of $G$, and $c^*(\theta) = \omega$ if and only if $\operatorname{Gal}(\Theta) = \{1\}$;
\item[(b)] there is a regular covering map $c\colon (M^\star, \omega) \to (U,q_*\theta)$ with $U$ an open subset of~$G/H$,~$H$ a group of f\/inite index, and $c^*(q_*\theta|_U) = \omega$ if and only if $\mathfrak{gal}(\Theta) = \{0\}$.
\end{itemize}
In any case, the necessary and suf\/f\/icient condition for $(M,\omega)$ and $(G,\theta)$ to be isogenous parallelized varieties is that $\mathfrak{gal}(\Theta) = \{0\}$.

\subsection{Darboux--Cartan connection and Picard--Vessiot}

Note that the coparallelism $\omega$ gives a rational trivialization of~$TM$ as the trivial bundle of f\/i\-ber~$\mathfrak g$. In $TM$ we have def\/ined the connection $\nabla^{\rm rec}$ whose horizontal vector f\/ields are the symmetries of the parallelism. On the other hand, $G$ acts in $\mathfrak g$ by means of the adjoint action. The Cartan-Darboux connection induces then a connection~$\nabla^{\rm adj}$ in the associated trivial bundle $\mathfrak g\times M$ of f\/iber~$\mathfrak g$.

\begin{Proposition}\label{adj_conjugation}The map
\begin{gather*}\tilde\omega\colon \ \big(TM, \nabla^{\rm rec}\big) \to \big(\mathfrak g \times M, \nabla^{\rm adj}\big), \qquad X_x \mapsto (\omega_x(X_x), x)\end{gather*}
is a birational conjugation of the linear connections $\nabla^{\rm rec}$ and $\nabla^{\rm adj}$.
\end{Proposition}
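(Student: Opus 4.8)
The plan is to unwind the definitions of $\nabla^{\mathrm{rec}}$ and $\nabla^{\mathrm{adj}}$ into explicit linear systems in the frame provided by the coparallelism and then observe that the two systems literally coincide. First I would fix the basis $A_1,\dots,A_r$ of $\mathfrak g$ together with the dual frame $\vec X_1,\dots,\vec X_r$ of $\nabla$-horizontal vector fields on $M$, so that $\omega(\vec X_i)=A_i$ and, as computed just before Lemma~\ref{Lemma2}, the Christoffel symbols of $\nabla^{\mathrm{rec}}$ in this frame are the structure constants: $\nabla^{\mathrm{rec}}_{\vec X_i}\vec X_j=\sum_k\lambda_{ij}^k\vec X_k$. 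Under the trivialization $\tilde\omega$ the frame $\vec X_i$ is carried to the constant frame $A_i$ of $\mathfrak g\times M$, so $(TM,\nabla^{\mathrm{rec}})$ becomes the trivial bundle $\mathfrak g\times M$ equipped with the connection whose connection form, in the constant frame $\{A_i\}$ and with respect to the coframe dual to $\{\vec X_i\}$, has matrix entries $\lambda_{ij}^k$; equivalently this is the $1$-form $\mathrm{ad}\circ\omega\in\Omega^1(M)\otimes\mathrm{End}(\mathfrak g)$, since $\mathrm{ad}_{A_i}(A_j)=[A_i,A_j]=\sum_k\lambda_{ij}^k A_k$.

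Next I would compute $\nabla^{\mathrm{adj}}$ from the Darboux--Cartan connection. The Darboux--Cartan connection on $P=G\times M$ is $\ker\Theta$ with $\Theta=\theta-\omega$; its associated connection on the bundle $\mathfrak g\times M$ associated to $P$ via the adjoint representation $\mathrm{Adj}\colon G\to\mathrm{GL}(\mathfrak g)$ has connection form obtained by applying the derived representation $\mathrm{ad}=d\,\mathrm{Adj}$ to the connection $1$-form of $\Theta$ restricted to a section. Choosing the obvious section $x\mapsto(e,x)$ of $P\to M$ (with $e\in G$ the identity), the pullback of $\Theta$ is $-\omega$ (since $\theta$ pulls back to $0$ along a constant $G$-coordinate), so the connection form of $\nabla^{\mathrm{adj}}$ in the constant frame of $\mathfrak g$ is $-\mathrm{ad}\circ\omega$. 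Here one must be careful about the sign coming from the right-action convention $(g,x)*g'=(gg',x)$: the horizontality equation $\Theta=0$ reads $\theta=\omega$, i.e.\ $g^{-1}dg=\omega$, whose solutions $g(x)$ conjugate the parallelisms, and differentiating the induced action on $\mathfrak g$ produces exactly the reciprocal sign. I would check the sign bookkeeping directly on Example~\ref{ex:B} or on the algebraic-group model of Example~\ref{ex:group}, where $\nabla^{\mathrm{rec}}$ is the connection of $\omega_{\mathrm{rec}}$ and the adjoint bundle is genuinely $\mathfrak g\times G$, so the identification is transparent.

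Finally I would assemble the two computations: both $\nabla^{\mathrm{rec}}$ (transported by $\tilde\omega$) and $\nabla^{\mathrm{adj}}$ are, on the trivial bundle $\mathfrak g\times M$ in the constant frame, the connection with connection form $\mathrm{ad}\circ\omega$ (up to the fixed sign convention reconciled above), hence $\tilde\omega$ intertwines them. Since $\omega$ is a rational isomorphism $T_xM\xrightarrow{\sim}\mathfrak g$ at each regular point, $\tilde\omega$ is a birational bundle isomorphism, and a birational bundle isomorphism carrying one connection $1$-form to another is by definition a birational conjugation of the connections, which is the assertion. The main obstacle is not any hard analysis but precisely this sign and convention matching between the right-action normalization of $\Theta$, the passage $\mathrm{Adj}\rightsquigarrow\mathrm{ad}$, and the defining formula $\nabla^{\mathrm{rec}}_{\vec X}\vec Y=\nabla_{\vec Y}\vec X+[\vec X,\vec Y]$; once the conventions in Appendix~\ref{ApA} are pinned down the identity is a one-line comparison of connection matrices.
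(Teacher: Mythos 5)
Your proposal is correct and follows essentially the same route as the paper: trivialize $TM$ by the frame $\vec X_i\mapsto A_i$ and check that both connections have the structure constants as Christoffel symbols, i.e.\ connection form $\mathrm{ad}\circ\omega$, using equation~\eqref{covariant_associated} for the $\nabla^{\rm adj}$ side. The sign you worry about is already absorbed by the minus sign in \eqref{covariant_associated} applied to the pullback $s_1^*\Theta=-\omega$ along the section $x\mapsto(e,x)$, giving $\nabla^{\rm adj}_{X_i}A_j=[A_i,A_j]$ exactly as needed.
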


\begin{proof}It is clear that the map $\tilde\omega$ is birational. Let us consider $\{A_1,\ldots,A_m\}$ a basis of~$\mathfrak g$. Let $\rho\colon \mathfrak g\to \mathfrak X(M)$ be the parallelism associated to the parallelism $\omega$ and let us def\/ine $X_i = \rho(A_i)$. Then $\{X_1,\ldots, X_n\}$ is a rational frame in $M$ and the map $\tilde\omega$ conjugates the vector f\/ield $X_i$ with the constant section $A_i$ of the trivial bundle of f\/iber~$\mathfrak g$. By def\/inition of the reciprocal connection
\begin{gather*}\nabla^{\rm rec}_{X_i}X_j = [X_i,X_j].\end{gather*}
On the other hand, by def\/inition of the adjoint action and application of the covariant derivative as in equation~\eqref{covariant_associated}
of Appendix~\ref{ap7_associated} we obtain
\begin{gather*}\nabla^{\rm adj}_{X_i}A_j = [A_i,A_j].\end{gather*}
Therefore we have that $\tilde\omega$ is a rational morphism of linear connections that conjugates $\nabla^{\rm rec}$ with~$\nabla^{\rm adj}$.
\end{proof}

The following facts follow directly from Proposition~\ref{adj_conjugation}, and basic properties of the Galois group.

\begin{Corollary}\label{cor:rec} Let us consider the adjoint action $\rm{ Adj}\colon G \to {\rm GL}(\mathfrak g)$ and its derivative ${\rm adj}\colon \mathfrak{g}\to \operatorname{End}(\mathfrak g)$. The following facts hold:
\begin{itemize}\itemsep=0pt
\item[$(a)$] $\operatorname{Gal}(\nabla^{\rm rec}) = \operatorname{Adj}({\rm Gal(\Theta)})$;
\item[$(b)$] $\mathfrak{gal}(\nabla^{\rm rec}) = {\rm adj}(\mathfrak{gal}(\Theta))$;
\item[$(c)$] if $\mathfrak g$ is centerless then $\mathfrak{gal}(\nabla^{\rm rec})$ is isomorphic to $\mathfrak{gal}(\Theta)$;
\item[$(d)$] assume $\mathfrak g$ is centerless, then the necessary and sufficient condition for $(M,\omega)$ and $(G,\theta)$ to be isogenous is that $\mathfrak{gal}(\nabla^{\rm rec})=\{0\}$.
\end{itemize}
\end{Corollary}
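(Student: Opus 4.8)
The plan is to obtain all four statements as formal consequences of Proposition~\ref{adj_conjugation}, combined with two structural facts from the Picard--Vessiot formalism of Appendix~\ref{ApA}: (1) a birational conjugation of linear connections identifies their Galois groups (immediate from the leaf-closure description of $\operatorname{Gal}$, since conjugate connections have conjugate foliations on the associated frame bundles); and (2) if a linear connection $\nabla^{E}$ on an associated bundle $E = P\times_{\rho}V$ is the one induced by a principal connection on $P$ via a representation $\rho\colon G\to{\rm GL}(V)$, then $\operatorname{Gal}(\nabla^{E}) = \rho(\operatorname{Gal}$ of the principal connection$)$; this is the functoriality of the Picard--Vessiot construction along $\rho$, i.e., the Galois torsor of $\nabla^{E}$ is the pushforward along $\rho$ of the Galois torsor of the principal connection.

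For $(a)$: Proposition~\ref{adj_conjugation} says $\tilde\omega$ is a birational conjugation between $(TM,\nabla^{\rm rec})$ and $(\mathfrak g\times M,\nabla^{\rm adj})$, so by fact (1) we get $\operatorname{Gal}(\nabla^{\rm rec}) = \operatorname{Gal}(\nabla^{\rm adj})$. By construction $\nabla^{\rm adj}$ is the connection induced on the associated bundle $\mathfrak g\times M = P\times_{\operatorname{Adj}}\mathfrak g$ by the Darboux--Cartan connection $\Theta$ and the adjoint representation, so fact (2) gives $\operatorname{Gal}(\nabla^{\rm adj}) = \operatorname{Adj}(\operatorname{Gal}(\Theta))$, and $(a)$ follows. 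For $(b)$: differentiate. For any morphism of algebraic groups $\phi\colon H\to K$ one has $\operatorname{Lie}(\phi(H)) = d\phi(\operatorname{Lie}(H))$; apply this to $\phi = \operatorname{Adj}$ restricted to the algebraic group $\operatorname{Gal}(\Theta)$, use $d(\operatorname{Adj}) = {\rm adj}$, and combine with $(a)$ to get $\mathfrak{gal}(\nabla^{\rm rec}) = {\rm adj}(\mathfrak{gal}(\Theta))$.

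For $(c)$: the kernel of ${\rm adj}\colon\mathfrak g\to\operatorname{End}(\mathfrak g)$ is the center of $\mathfrak g$, so when $\mathfrak g$ is centerless ${\rm adj}$ is injective; restricting it to $\mathfrak{gal}(\Theta)$ and using $(b)$ shows $\mathfrak{gal}(\nabla^{\rm rec}) = {\rm adj}(\mathfrak{gal}(\Theta))$ is isomorphic to $\mathfrak{gal}(\Theta)$. For $(d)$: by the remarks following Definition~\ref{DCconnection}, $(M,\omega)$ and $(G,\theta)$ are isogenous parallelized varieties if and only if $\mathfrak{gal}(\Theta) = \{0\}$, and by $(c)$ this is equivalent to $\mathfrak{gal}(\nabla^{\rm rec}) = \{0\}$ under the centerless hypothesis.

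The only nonroutine point is fact (2): that the Galois group of the associated connection $\nabla^{\rm adj}$ is \emph{exactly} the image $\operatorname{Adj}(\operatorname{Gal}(\Theta))$, and not merely contained in it. This is where the principal Picard--Vessiot machinery of Appendix~\ref{ApA} is genuinely used — namely that pushing the $G$-Galois torsor of $\Theta$ forward along $G\xrightarrow{\operatorname{Adj}}{\rm GL}(\mathfrak g)$ produces a ${\rm GL}(\mathfrak g)$-torsor whose structure group is the Galois group of $\nabla^{\rm adj}$. I would invoke this from the appendix rather than reprove it here, after which $(a)$--$(d)$ are purely formal manipulations of algebraic groups and their Lie algebras.
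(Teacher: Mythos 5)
Your proposal is correct and follows essentially the same route as the paper: the authors likewise deduce $\operatorname{Gal}(\nabla^{\rm rec}) = \operatorname{Gal}(\nabla^{\rm adj})$ from Proposition~\ref{adj_conjugation}, invoke Theorem~\ref{associated galois} (your ``fact (2)'') to identify $\operatorname{Gal}(\nabla^{\rm adj})$ with $\operatorname{Adj}(\operatorname{Gal}(\Theta))$, obtain $(b)$ and $(c)$ by differentiating and using $\ker({\rm adj}) = Z(\mathfrak g)$, and derive $(d)$ from the remarks following Definition~\ref{DCconnection}. Your explicit differentiation step for $(b)$ is slightly more careful than the paper's (which simply cites the Lie-algebra version of Theorem~\ref{associated galois}), but the argument is the same.
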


\begin{proof}(a) and (b). First, by Proposition \ref{adj_conjugation} we have that $\operatorname{Gal}(\nabla^{\rm rec}) = \operatorname{Gal}(\nabla^{\rm adj})$ and so $\mathfrak{gal}(\nabla^{\rm rec}) = \mathfrak{gal}(\nabla^{\rm adj})$. By def\/inition $\nabla^{\rm adj}$ is the associated connection induced by $\Theta$ in the associated bundle $\mathfrak g\times M$. This trivial bundle is the associated bundle induced by the adjoint representation $\operatorname{Adj}\colon G\to \operatorname{End}(\mathfrak g)$. Then, by Theorem~\ref{associated galois}, we have $\operatorname{Gal}(\nabla^{\rm rec}) = \operatorname{Adj}({\rm Gal(\Theta)})$ and $\operatorname{Gal}(\nabla^{\rm rec}) = \operatorname{Adj}({\rm Gal(\Theta)})$.

(c) It is a direct consequence of (b). The kernel of ${\rm adj}\colon \mathfrak g \to \operatorname{End}(\mathfrak g)$ is the center of~$\mathfrak g$.

(d) It follows from the def\/inition of Darboux--Cartan connection (see remarks after Def\/i\-ni\-tion~\ref{DCconnection}) that the necessary and suf\/f\/icient condition for $(M,\omega)$ and $(G,\theta)$ to be isogenous is that $\mathfrak{gal}(\nabla^{\rm rec})=\{0\}$. By point~(b) we conclude.
\end{proof}

\subsection{Algebraic Lie algebras}

Let us consider $(M,\omega)$ a rational coparallelism of type $\mathfrak g$ with $\mathfrak g$ a centerless Lie algebra. We do not assume \emph{a priori} that $\mathfrak g$ is an algebraic Lie algebra. The connection $\nabla^{\rm rec}$ is, as said in Proposition~\ref{adj_conjugation}, conjugated to the connection in $\mathfrak g\times M$ induced by the adjoint action. Note that, in order to def\/ine this connection we do not need the group operation but just the Lie bracket in~$\mathfrak g$. We have an exact sequence
\begin{gather*}0\to \mathfrak g' \to \mathfrak g \to \mathfrak g^{ab}\to 0,\end{gather*}
where $\mathfrak g'$ is the derived algebra $[\mathfrak g, \mathfrak g]$. Since the Galois group acts by adjoint action, we have that $\mathfrak g'\times M$ is stabilized by the connection $\nabla^{\rm rec}$ and thus we have an exact sequence of connections
\begin{gather*}0\to (\mathfrak g'\times M, \nabla')\to \big(\mathfrak g \times M,\nabla^{\rm rec}\big)\to \big(\mathfrak g^{ab}\times M, \nabla^{ab}\big)\to 0.\end{gather*}

\begin{Lemma} The Galois group of $\nabla^{ab}$ is the identity, therefore $\nabla^{ab}$ has a basis of rational horizontal sections.
\end{Lemma}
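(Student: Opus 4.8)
The claim is that the quotient connection $\nabla^{ab}$ on $\mathfrak{g}^{ab}\times M$, induced from $\nabla^{\rm rec}$ via the adjoint action, has trivial Galois group. The key point is that the adjoint action of $\mathfrak{g}$ on $\mathfrak{g}^{ab}=\mathfrak{g}/[\mathfrak{g},\mathfrak{g}]$ is \emph{trivial}: for any $A,B\in\mathfrak{g}$ one has $[A,B]\in[\mathfrak{g},\mathfrak{g}]$, hence $\operatorname{adj}(A)$ annihilates $\mathfrak{g}^{ab}$. So I would first record this observation. Then, tracing through the construction of $\nabla^{ab}$ (the associated connection induced by $\Theta$, or equivalently the quotient of $\nabla^{\rm rec}$ under the surjection $\mathfrak{g}\times M\to\mathfrak{g}^{ab}\times M$), the Christoffel-type data of $\nabla^{ab}$ in the rational frame coming from a basis of $\mathfrak{g}$ is given precisely by these structure constants modulo $[\mathfrak{g},\mathfrak{g}]$, which all vanish.

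Concretely, I would argue as follows. Pick a basis $A_1,\dots,A_m$ of $\mathfrak{g}$ and let $X_i=\rho(A_i)$ be the associated rational frame on $M$; by Proposition~\ref{adj_conjugation} the map $\tilde\omega$ conjugates $\nabla^{\rm rec}$ with the adjoint connection, under which $\nabla^{\rm rec}_{X_i}X_j=[X_i,X_j]$ corresponds to $\nabla^{\rm adj}_{X_i}A_j=[A_i,A_j]=\sum_k\lambda_{ij}^k A_k$. Projecting to $\mathfrak{g}^{ab}$ and denoting by $\bar A_i$ the image of $A_i$, the quotient connection satisfies $\nabla^{ab}_{X_i}\bar A_j=\overline{[A_i,A_j]}=0$, since $[A_i,A_j]\in[\mathfrak{g},\mathfrak{g}]=\mathfrak{g}'$. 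Therefore the constant sections $\bar A_1,\dots,\bar A_m$ of the trivial bundle $\mathfrak{g}^{ab}\times M$ are rational $\nabla^{ab}$-horizontal sections, and they form a basis of the fiber. Since a connection admitting a full rational basis of horizontal sections has trivial Picard--Vessiot extension, $\operatorname{Gal}(\nabla^{ab})=\{1\}$ and $\mathfrak{gal}(\nabla^{ab})=\{0\}$.

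**The main obstacle.** There is almost no analytic difficulty here; the only thing requiring care is bookkeeping. One must check that the exact sequence of connections $0\to(\mathfrak{g}'\times M,\nabla')\to(\mathfrak{g}\times M,\nabla^{\rm rec})\to(\mathfrak{g}^{ab}\times M,\nabla^{ab})\to 0$ is genuinely well defined, i.e.\ that $\nabla^{\rm rec}$ (equivalently $\nabla^{\rm adj}$) really does stabilize the sub-bundle $\mathfrak{g}'\times M$ — but this is exactly the statement, already invoked in the paragraph preceding the lemma, that the Galois group acts through the adjoint representation, which preserves $\mathfrak{g}'=[\mathfrak{g},\mathfrak{g}]$ as a characteristic ideal. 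Granting that, the quotient connection exists, and the computation above shows its horizontal sections are the images of the constant sections. So the proof reduces to the two-line remark that $\operatorname{adj}$ is zero on $\mathfrak{g}^{ab}$, together with the standard fact that a linear connection with a rational horizontal basis has trivial Galois group (this is the content of the appendix's description of $\operatorname{Gal}$ via Zariski closure of a leaf: the leaf is the graph of a rational section exactly when such a basis exists).
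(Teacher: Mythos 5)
Your argument is correct and is essentially the paper's own proof: the authors simply observe that the induced action of $\mathfrak g$ on $\mathfrak g^{ab}$ vanishes, so the constant functions $M\to\mathfrak g^{ab}$ are rational horizontal sections, forcing the Galois group to be trivial. Your version only adds the explicit frame computation $\nabla^{ab}_{X_i}\bar A_j=\overline{[A_i,A_j]}=0$ and the bookkeeping that $\mathfrak g'$ is stabilized, both of which are consistent with the surrounding text.
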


\begin{proof} By def\/inition, the action of $\mathfrak g$ in $\mathfrak g^{ab}$ vanishes. Thus, the constant functions $M\to \mathfrak g^{ab}$ are rational horizontal sections.
\end{proof}

\begin{Lemma}\label{th:algebraic} Let $\omega$ be a rational coparallelism of $M$ of type $\mathfrak g$ with $\mathfrak g$ a centerless Lie algebra.
If $\mathfrak{gal}(\nabla^{\rm rec}) = \{0\}$ then $\mathfrak g$ is an algebraic Lie algebra.
\end{Lemma}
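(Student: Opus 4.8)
The strategy is to reconstruct an algebraic group structure on (a variety isomorphic to) $\mathfrak g$ directly from the vanishing of $\mathfrak{gal}(\nabla^{\mathrm{rec}})$, using the Picard--Vessiot theory of $\nabla^{\mathrm{rec}}$. By Proposition~\ref{adj_conjugation}, $\nabla^{\mathrm{rec}}$ is birationally conjugate to the connection $\nabla^{\mathrm{adj}}$ on the trivial bundle $\mathfrak g \times M$ induced by the adjoint action of (the abstract Lie algebra) $\mathfrak g$; since $\mathfrak g$ is centerless, $\mathrm{adj}\colon \mathfrak g \to \operatorname{End}(\mathfrak g)$ is injective and identifies $\mathfrak g$ with a Lie subalgebra of $\mathfrak{gl}(\mathfrak g)$. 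The hypothesis $\mathfrak{gal}(\nabla^{\mathrm{rec}}) = \{0\}$ means the Galois group $\operatorname{Gal}(\nabla^{\mathrm{rec}})$ is finite; equivalently, by Corollary~\ref{cor:rec}(b), $\mathfrak{gal}(\nabla^{\mathrm{adj}}) = \{0\}$, so the Zariski closure of a leaf of the flat connection $\nabla^{\mathrm{adj}}$ on $R^1$ (or on the associated frame-type bundle) is a finite cover of $M$.

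First I would set up the Picard--Vessiot extension $\mathbf L / \mathbf C(M)$ for $\nabla^{\mathrm{rec}}$: choose a regular point $x$, a basis $\vec X_1,\dots,\vec X_r$ of $\nabla$-horizontal fields with $\omega(\vec X_i) = A_i$, and a fundamental system of local $\nabla^{\mathrm{rec}}$-horizontal fields $\vec Y_i$ with $\vec Y_i(x) = \vec X_i(x)$; write $\vec Y_i = \sum_j a_{ji} \vec X_j$. The matrix $(a_{ij})$ is a fundamental solution matrix, and $\mathbf L = \mathbf C(M)(a_{ij})$. Finiteness of the Galois group forces $[\mathbf L : \mathbf C(M)] < \infty$, so the entries $a_{ij}$ are algebraic over $\mathbf C(M)$. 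The key structural input is Lemma~\ref{Lemma3}: the $\vec Y_i$ span a Lie algebra isomorphic to $\mathfrak g$ (with structure constants $-\lambda_{ij}^k$). Passing to the branched cover $\widetilde M = \operatorname{Spec}$ of the integral closure of $\mathbf C[M]$ in $\mathbf L$, the pulled-back fields $\vec Y_i$ become \emph{rational} vector fields on $\widetilde M$ spanning $\mathfrak g^{\mathrm{rec}}$ and commuting with the (pulled-back) parallelism $\vec X_i$. So on $\widetilde M$ we have a pair of commuting parallelisms of type $\mathfrak g$, exactly as in Example~\ref{ex:AGP}.

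Next I would produce from this pair of commuting parallelisms on $\widetilde M$ an algebraic group. The two commuting systems $\{\vec X_i\}$ and $\{\vec Y_j\}$ integrate, near $x$, to two commuting local transitive actions; the orbit map of one of them, normalized at $x$, gives a local analytic identification of a neighbourhood with a neighbourhood of the identity in a group. What must be checked is algebraicity: because $\mathbf L/\mathbf C(M)$ is a \emph{finite} extension, the multiplication law expressed in the rational frame has algebraic coefficients, hence is defined by polynomial equations over a finite extension of $\mathbf C$, so one obtains an algebraic group germ and then (by Weil-type rationalization of a birational group law, or by invoking that a simply transitive algebraic action yields a group) an honest algebraic group $G$ with $\operatorname{Lie}(G) = \mathfrak g$. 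Alternatively, and more cheaply: it suffices to show $\mathfrak g \subset \mathfrak{gl}(\mathfrak g)$ is an \emph{algebraic} subalgebra, i.e.\ the Lie algebra of an algebraic subgroup of $\mathrm{GL}(\mathfrak g)$. For this, run the PV argument for $\nabla^{\mathrm{adj}}$ directly: the fundamental matrix $U$ satisfies a linear system with entries the structure constants, finiteness of $\operatorname{Gal}$ makes $U$ algebraic, and the Zariski closure of $\{U(m) : m \in \widetilde M\}$ inside $\mathrm{GL}(\mathfrak g)$, translated appropriately, is an algebraic group whose Lie algebra one identifies with $\operatorname{adj}(\mathfrak g) \cong \mathfrak g$ by evaluating tangent vectors to the leaf at $x$ against the Christoffel symbols, which are precisely the structure constants.

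The main obstacle is passing from the \emph{local analytic} group germ (which exists on purely formal/analytic grounds from a pair of commuting transitive parallelisms, by Lie's third theorem) to an \emph{algebraic} group, using only that the transition data is algebraic over $\mathbf C(M)$ rather than transcendental. The heart of this is: an algebraic variety carrying a rational parallelism whose symmetry fields are also rational is birationally an algebraic group — this is essentially the statement that a ``rational pre-group law'' (rational composition defined near a point, associative, with rational inverse, coming from the flows of the two commuting frames) can be made regular on a group variety, which is a version of Weil's theorem on group chunks. I would isolate and prove that as the crucial lemma; once it is available, the theorem follows by applying it on the finite cover $\widetilde M$ and reading off $\operatorname{Lie}(G) = \mathfrak g$.
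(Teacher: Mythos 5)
Your reduction to a finite cover is fine: $\mathfrak{gal}(\nabla^{\rm rec})=\{0\}$ does make the Picard--Vessiot extension finite algebraic, and on the normalization $\widetilde M$ of $M$ in that extension the symmetry fields $\vec Y_i$ become rational, so $\widetilde M$ carries two commuting rational parallelisms of type $\mathfrak g$. But the step you yourself flag as ``the main obstacle'' --- that a variety with a rational parallelism whose symmetries are also rational is birationally an algebraic group --- is a genuine gap, and inside this paper it is a circularity: that statement is exactly Corollary~\ref{th:pair}, which the authors \emph{deduce from} Lemma~\ref{th:algebraic} via Theorem~\ref{th_criteria}. As a standalone argument it is not supplied either: Weil's group-chunk theorem needs a \emph{rational} pre-group law as input, and two commuting rational frames only give you a local analytic multiplication obtained by integrating flows; flows of rational vector fields are not rational, so rationality of the composition law is precisely what has to be proved (e.g., by showing the leaves of Malgrange's foliation $\operatorname{pr}_1^*\omega-\operatorname{pr}_2^*\omega=0$ through the diagonal are algebraic), and no such argument appears in your sketch. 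Your ``cheaper'' alternative has the same problem in a different guise: knowing that the fundamental matrix $U$ of $\nabla^{\rm adj}$ is algebraic over $\mathbf C(M)$ tells you its values sweep out an algebraic subvariety of ${\rm GL}(\mathfrak g)$, but identifying that subvariety (suitably translated) as a \emph{subgroup} with Lie algebra $\operatorname{adj}(\mathfrak g)$ is again the whole content of the lemma. (A further small circularity: Corollary~\ref{cor:rec} is phrased via the Darboux--Cartan form $\Theta$, which presupposes an algebraic group $G$ with ${\mathfrak{lie}}(G)=\mathfrak g$ --- the object whose existence is at stake; only Proposition~\ref{adj_conjugation}, which needs just the bracket, may be used here.)

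The paper's proof takes a different and self-contained route that you should compare with. It fixes a faithful linear representation of $\mathfrak g$ (Ado) and lets $E$ be the algebraic hull, with $\mathfrak e={\mathfrak{lie}}(E)\supseteq\mathfrak g$; by Lemma~\ref{ap2_2}, $[\mathfrak e,\mathfrak e]=[\mathfrak g,\mathfrak g]\subseteq\mathfrak g$, so $\mathfrak g$ is an ideal of $\mathfrak e$ and $\mathfrak e/\mathfrak g$ is acted on trivially. It then forms the $E$-principal connection on $E\times M$ spanned by the $A_i+X_i$ and its adjoint connection $\overline\nabla$ on $\mathfrak e\times M$, which restricts to $\nabla^{\rm rec}$ on $\mathfrak g\times M$ and descends to a connection with trivial Galois Lie algebra on $(\mathfrak e/\mathfrak g)\times M$. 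A three-line computation using centerlessness then forces $\mathfrak{gal}(\overline\nabla)=\{0\}$, so the leaf of the principal connection is algebraic, and its projection to $E$ is an algebraic integral variety of the left-invariant distribution $\mathfrak g$ on $E$ --- i.e., $\mathfrak g$ is algebraic. This sidesteps any group-chunk construction entirely; the ``algebraicity of the group'' is borrowed from the ambient algebraic group $E$ rather than built from the parallelism.
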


\begin{proof} Assume $\mathfrak g$ is a linear Lie algebra and et $E$ be the smallest algebraic subgroup such that $ \operatorname{Lie} (E) = \mathfrak e\supset \mathfrak g$. We may assume that $E$ is also centerless. Let $A_{1}, \ldots, A_{r}$ be a basis of $\mathfrak g$, for $i=1,\ldots,r$, $X_{i} = \omega^{-1}(A_{i})$. Complete with $B_{1}, \ldots, B_{p}$ in such way that $A_1,\ldots,A_r,B_1,\ldots,B_p$ is a basis of $\mathfrak e$. We consider in $E \times M$ the distribution spanned by the vector f\/ields $A_{i}+X_{i}$. This is a $E$-principal connection called~$\nabla$.

Let $\overline{\nabla}$ be the induced connection via the adjoint representation on $\mathfrak e \times M$ then
\begin{enumerate}\itemsep=0pt
\item[1)] $\overline{\nabla}$ preserves $\mathfrak g$ and $\overline{\nabla}|_{\mathfrak g} = \nabla^{\rm rec}$, by hypothesis $\mathfrak{gal}(\overline{\nabla}|_{\mathfrak g}) = \{0\}$;
\item[2)] if $\widetilde{\nabla}$ is the quotient connection on ${\mathfrak e}/ \mathfrak g$ then $\mathfrak{gal}(\widetilde{\nabla}) =\{0\}$.
\end{enumerate}
If $\varphi \in \mathfrak{gal}(\overline{\nabla})$ then for any $X \in \mathfrak g$, $[X,B_{i}] \in \mathfrak g$ thus $0 = \varphi [X,B_{i}] = [ X, \varphi B_{i}]$ and $\varphi B_i$ commute with~$\mathfrak g$. From the second point above $\varphi B \in \mathfrak g$. By hypothesis $\varphi B_i =0$ and $\mathfrak{gal}(\overline{\nabla}) =\{0\}$. The projection on $E$ of an algebraic leaf of $\nabla$ gives an algebraic leaf for the foliation of~$E$ by the left translation by~$\mathfrak g$. This proves the lemma.
\end{proof}

\begin{Theorem}\label{th_criteria} Let $\mathfrak g$ be a centerless Lie algebra. An algebraic variety $(M,\omega)$ with a rational parallelism of type~$\mathfrak g$ is isogenous to an algebraic group if and only if $\mathfrak{gal}(\nabla^{\rm rec}) = \{0\}$.
\end{Theorem}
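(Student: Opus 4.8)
The plan is to deduce the theorem by combining the structural results already established, splitting into the two implications and handling the ``only if'' direction first. First I would assume $(M,\omega)$ is isogenous to an algebraic group $G$ with $\mathfrak{lie}(G)=\mathfrak g$ (note that $G$ exists as soon as $(M,\omega)$ is isogenous to \emph{some} algebraic group, since isogeny of parallelized varieties preserves the type $\mathfrak g$). By Definition~\ref{isogenous} there is $(P,\eta)$ with dominant maps to both $(M,\omega)$ and $(G,\theta)$ pulling back the coparallelisms to $\eta$. A dominant rational map conjugating coparallelisms conjugates the associated connections and hence also their reciprocals, so it induces an isomorphism of the Picard--Vessiot extensions of the reciprocal connections and an isomorphism of Galois groups; thus $\mathfrak{gal}(\nabla^{\rm rec}_M)\cong\mathfrak{gal}(\nabla^{\rm rec}_P)\cong\mathfrak{gal}(\nabla^{\rm rec}_G)$. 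For the Maurer--Cartan coparallelism on $G$, the reciprocal connection is the one whose horizontal vector fields are the right-invariant ones (as observed in the example following Lemma~\ref{Lemma3}); these are regular vector fields on $G$, so $\nabla^{\rm rec}_G$ has a rational (indeed regular) basis of horizontal sections and $\operatorname{Gal}(\nabla^{\rm rec}_G)=\{1\}$, whence $\mathfrak{gal}(\nabla^{\rm rec})=\{0\}$.

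For the ``if'' direction, assume $\mathfrak{gal}(\nabla^{\rm rec})=\{0\}$. By Lemma~\ref{th:algebraic}, $\mathfrak g$ is then an algebraic Lie algebra; since $\mathfrak g$ is centerless we may take a centerless algebraic group $G$ with $\mathfrak{lie}(G)=\mathfrak g$ and its Maurer--Cartan coparallelism $\theta$. Now form the Darboux--Cartan connection $\Theta=\theta-\omega$ on $P=G\times M\to M$ as in Definition~\ref{DCconnection}. By Corollary~\ref{cor:rec}(b), $\mathfrak{gal}(\nabla^{\rm rec})=\mathrm{adj}(\mathfrak{gal}(\Theta))$, and since $\mathfrak g$ is centerless $\mathrm{adj}$ is injective, so $\mathfrak{gal}(\Theta)=\{0\}$. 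By the remarks following Definition~\ref{DCconnection} (equivalently Corollary~\ref{cor:rec}(d)), $\mathfrak{gal}(\Theta)=\{0\}$ is exactly the necessary and sufficient condition for $(M,\omega)$ and $(G,\theta)$ to be isogenous as parallelized varieties; hence $(M,\omega)$ is isogenous to the algebraic group $G$, as desired.

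The only delicate point is making sure all the ingredients invoked from the Darboux--Cartan section genuinely apply: we must check that the $G$ produced by Lemma~\ref{th:algebraic} can be taken centerless (which uses that $\mathfrak g$ is centerless, passing to $G/Z(G)$ if necessary, with the Maurer--Cartan form descending appropriately), and that Corollary~\ref{cor:rec} is stated for precisely this $G$ and $\omega$. The genuinely substantive work has already been done: Lemma~\ref{th:algebraic} (algebraicity of $\mathfrak g$ from vanishing of $\mathfrak{gal}(\nabla^{\rm rec})$) and Corollary~\ref{cor:rec}(b)--(d) (relating $\mathfrak{gal}(\nabla^{\rm rec})$ to $\mathfrak{gal}(\Theta)$ and to isogeny). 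So the proof is essentially an assembly of these, and I expect the main obstacle to be purely bookkeeping — verifying the centerless reduction and checking that the hypotheses of Lemma~\ref{th:algebraic} and Corollary~\ref{cor:rec} are literally met in the order used, rather than any new idea.
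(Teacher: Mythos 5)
Your proposal is correct and follows essentially the same route as the paper, whose entire proof is the one-line remark that the theorem ``follows directly from Lemma~\ref{th:algebraic} and Corollary~\ref{cor:rec}''; you simply spell out the assembly (the ``only if'' direction via invariance of the Galois group under isogeny, the ``if'' direction via Lemma~\ref{th:algebraic} plus Corollary~\ref{cor:rec}(b),(d)). The one worry you flag --- whether $G$ itself must be taken centerless --- is not actually needed, since Corollary~\ref{cor:rec} only requires the Lie algebra $\mathfrak g$ to be centerless (so that $\mathrm{adj}$ is injective), not the group.
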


\begin{proof} It follows directly from Lemma \ref{th:algebraic} and Corollary \ref{cor:rec}.
\end{proof}

\begin{Corollary}\label{th:pair}Let $\mathfrak g$ be a centerless Lie algebra. Any algebraic variety endowed with a pair of commuting rational parallelisms of type $\mathfrak g$ is isogenous to an algebraic group endowed with its two canonical parallelisms of left and right invariant vector fields.
\end{Corollary}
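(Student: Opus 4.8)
The plan is to reduce Corollary~\ref{th:pair} to Theorem~\ref{th_criteria} by recognizing a commuting parallelism as a rational trivialization of the reciprocal connection, and then to check that the isogeny produced for a single coparallelism automatically transports the second parallelism as well.

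First I would fix notation: let $\rho_1,\rho_2$ be the two commuting rational parallelisms of type $\mathfrak g$ on $M$, let $\omega$ be the coparallelism equivalent to $\rho_1$ and $\nabla$ its associated connection, with reciprocal connection $\nabla^{\rm rec}$. Choosing a basis $A_1,\dots,A_r$ of $\mathfrak g$, the vector fields $\vec X_i=\rho_1(A_i)$ form a basis of $\nabla$-horizontal vector fields, and by hypothesis each $\rho_2(A_i)$ commutes with every $\vec X_j$. By Lemma~\ref{Lemma1} this means precisely that the $\rho_2(A_i)$ are $\nabla^{\rm rec}$-horizontal; being the components of a rational parallelism they are rational and pointwise linearly independent on a Zariski open subset. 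Hence $\nabla^{\rm rec}$ admits a basis of rational horizontal sections, so $\operatorname{Gal}(\nabla^{\rm rec})=\{1\}$ and in particular $\mathfrak{gal}(\nabla^{\rm rec})=\{0\}$. By Theorem~\ref{th_criteria} (through Lemma~\ref{th:algebraic} and Corollary~\ref{cor:rec}), $\mathfrak g$ is an algebraic Lie algebra and $(M,\omega)$ is isogenous to $(G,\theta)$, where $G$ is an algebraic group with $\mathfrak{lie}(G)=\mathfrak g$ and $\theta$ its Maurer--Cartan form. Unravelling Definition~\ref{isogenous}, there is a variety $(P,\eta)$ with a rational coparallelism of type $\mathfrak g$ and dominant maps $f\colon P\to M$ and $g\colon P\to G$ with $f^*\omega=g^*\theta=\eta$.

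It then remains to see that $f$ and $g$ also conjugate the two reciprocal parallelisms, i.e.\ respect the pairs. The point I would make is that the second parallelism is canonically determined: the space of rational $\nabla^{\rm rec}$-horizontal vector fields is an $r$-dimensional Lie algebra (by Lemma~\ref{Lemma3}) consisting of symmetries of $\omega$, and it contains $\rho_2(\mathfrak g)$, which is also $r$-dimensional over $\mathbf C$ since the $\rho_2(A_i)$ are pointwise linearly independent; hence $\rho_2(\mathfrak g)$ is the full symmetry algebra of $\omega$. On $G$ the corresponding object is the Lie algebra $\mathfrak g^{\rm rec}$ of right-invariant vector fields, which is exactly the symmetry algebra of $\theta$. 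Since $f$ and $g$ are dominant maps between varieties of the same dimension $r$, they are generically \'etale, so they preserve Lie brackets of rational vector fields and carry $\rho_1$ (resp.\ the left-invariant parallelism) to the parallelism equivalent to $\eta$; consequently they carry the symmetry algebra of $\omega$ (resp.\ of $\theta$) to the symmetry algebra of $\eta$. Therefore $f^*(\rho_2(\mathfrak g))=g^*(\mathfrak g^{\rm rec})$, both coinciding with the reciprocal parallelism of $\eta$ on $P$. Thus $(P,\eta)$ together with $f$ and $g$ exhibits $(M,\rho_1,\rho_2)$ and $G$ with its left- and right-invariant parallelisms as isogenous, which is the claim.

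The hard part will be the verification in the last paragraph that the isogeny of parallelized varieties furnished by Theorem~\ref{th_criteria} automatically respects the reciprocal parallelisms; this rests on three routine but necessary facts: compatibility of pullback along a generically \'etale map with Lie brackets and with the ``$\nabla^{\rm rec}$-horizontal'' description, the identification of $\mathfrak g^{\rm rec}$ with the infinitesimal symmetries of the Maurer--Cartan form, and the dimension count $\dim_{\mathbf C}\rho_2(\mathfrak g)=r=\dim_{\mathbf C}(\text{symmetry algebra of }\omega)$ which forces $\rho_2$ to be the reciprocal parallelism of $\omega$. Once these are in place, no further differential-Galois input is needed beyond Theorem~\ref{th_criteria}.
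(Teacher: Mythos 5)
Your proof takes exactly the paper's route: the paper's entire argument is the one-line observation that a commuting parallelism furnishes a rational basis of $\nabla^{\rm rec}$-horizontal sections, forcing $\mathfrak{gal}(\nabla^{\rm rec})=\{0\}$, after which Theorem~\ref{th_criteria} applies. Your final paragraph, verifying via the dimension count that the isogeny also transports the second parallelism onto the right-invariant fields, is correct and fills in a detail the paper's proof leaves implicit.
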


\begin{proof}Just note that to have a pair of commuting parallelism is a more restrictive condition than having a parallelism with vanishing Lie algebra of the Galois group of its reciprocal connection.
\end{proof}

This result can be seen as an algebraic version of Wang result in \cite{Wang}. It gives the classif\/ication of algebraic varieties endowed with pairs of commuting parallelisms. Assuming that the Lie algebra is centerless is not a superf\/luous hypothesis, note that the result clearly does not hold for abelian Lie algebras. There are rational $1$-forms in~$\mathbf{CP}_1$ that are not exact (isogenous to $(\mathbf C, {\rm d}z)$) nor logarithmic (isogenous to~$(\mathbf C^*,{\rm d}\log(z))$). In these examples, the pair of commuting parallelisms is given by twice the same parallelism.

\begin{Remark}Let $(M,\omega,\omega')$ be a manifold endowed with a pair of commuting parallelism forms of type $\mathfrak g$, a centerless Lie algebra. From Lemma~\ref{th:algebraic} we have that $\mathfrak g$ is an~algebraic Lie algebra. We can construct the algebraic group enveloping $\mathfrak g$ as follows. We consider the adjoint action
\begin{gather*}{\rm adj}\colon \ \mathfrak g\hookrightarrow \operatorname{End}(\mathfrak g).\end{gather*}
The algebraic group enveloping $\mathfrak g$ is identif\/ied with the algebraic subgroup $G$ of $\operatorname{Aut}(\mathfrak g)$ whose Lie algebra is ${\rm adj}(\mathfrak g)$. From Corollary~\ref{cor:rec}(a), we have that $\operatorname{Gal}(\Theta)=\{e\}$. Thus, there is a~rational map $f\colon M\to G$ such that $f^*(\theta) = \omega$, where~$\theta$ is the Maurer--Cartan form of~$G$. We can express explicitly this map in terms of the commuting parallelism forms. For each $x\in M$ in the domain of regularity of the parallelisms, $\omega(x)$ and $\omega'(x)$ are isomorphisms of~$T_xM$ with~$\mathfrak g$. We def\/ine
\begin{gather*}f(x) = - \omega(x)\circ \omega'(x)^{-1}.\end{gather*}
\end{Remark}

\begin{Remark}In virtue of Corollary~\ref{th:pair}, if $\mathfrak g$ is a non-algebraic centerless Lie algebra, there is no algebraic variety endowed with a pair of regular commuting parallelisms of type $\mathfrak g$. This limits the possible generalizations of Theorem~\ref{TDeligne}.
\end{Remark}

\begin{Remark}B.~Malgrange has given in~\cite{malgrange-P} another criterion: If $(M, \omega)$ is a parallelized variety and $\mathcal F$ is the foliation on $M \times M$ given by $\operatorname{pr}_1^\ast \omega - \operatorname{pr}_2^\ast \omega = 0$. Then $(M,\omega)$ is birational to an algebraic group if and only if leaves of~$\mathcal F$ are graphs of rational maps. The relations with Theorem~\ref{th_criteria} and Corollary~\ref{th:pair} are the following. One can identify $TM$ with the vertical tangent (i.e., the kernel of~${\rm d} \operatorname{pr}_2$) along the diagonal in $M\times M$. The diagonal is a leaf of $\mathcal{F}$ and the linearization of $\mathcal F$ along the diagonal def\/ines a connection $\nabla_{\mathcal F}$ on $TM$. By construction:
\begin{itemize}\itemsep=0pt
\item $\nabla_{\mathcal F}$-horizontal sections commute with the parallelism, it is the reciprocal Lie connection;
\item if leaves of $\mathcal F$ are algebraic then $\nabla_{\mathcal F}$-horizontal section are algebraic.
\end{itemize}
\end{Remark}

\section{Some homogeneous varieties}

The notion of {\it isogeny} can be extended beyond the simply-transitive case. Let us consider a~complex Lie algebra $\mathfrak g$. An {\it infinitesimally homogeneous variety} of type $\mathfrak g$ is a pair $(M,\mathfrak s)$ consisting of a complex smooth irreducible variety $M$ and a f\/inite-dimensional Lie algebra \smash{$\mathfrak s \subset \mathfrak X(M)$} isomorphic to~$\mathfrak g$.

As before, we are interested in conjugation by rational and algebraic maps so that, whenever necessary, we replace $M$ by a suitable Zariski open subset. In this context, we say that a~dominant rational map $f\colon M_1 \dasharrow M_2$ between varieties of the same dimension conjugates the inf\/initesimally homogeneous varieties $(M_1,\mathfrak s_1)$ and $(M_2,\mathfrak s_2)$ if $f^*(\mathfrak s_2) = \mathfrak s_1$. We say that $(M_1,\mathfrak s_1)$ and $(M_2,\mathfrak s_2)$ are {\it isogenous} if they are conjugated to the same inf\/initesimally homogeneous space of type $\mathfrak g$.

Let $G$ be an algebraic group over $\mathbf C$, $K$ an algebraic subgroup, $\mathfrak{lie}(G)$ its Lie algebra of left invariant vector f\/ields and $\mathfrak{lie}(G)^{\rm rec}$ its Lie algebra of right invariant vector f\/ields. A natural example of inf\/initesimally homogeneous space are the homogeneous spaces $G/H$ endowed with the induced action of the Lie algebra $\mathfrak{lie}(G)^{\rm rec}$. We want to recognize when a~inf\/initesimally homogeneous space is isogenous to an homogeneous space. We prove that if $\mathfrak s \subset \mathfrak X(M)$ is a {\it normal} Lie algebra of vector f\/ields then~$(M,\mathfrak s)$ is isogenous to a homogeneous space. In particular, we prove that any $n$-dimensional inf\/initesimally homogeneous space of type $\mathfrak{sl}_{n+1}(\mathbf C)$ is isogenous to the projective space. Our answer is based on a generalization of the computations done in Section~\ref{sl2}.

\subsection[The $\mathfrak{sl}_2$ case]{The $\boldsymbol{\mathfrak{sl}_2}$ case}

\begin{Theorem}[Loray--Pereira--Touzet (private communication)] Let $\Cc$ be a curve with~$X$,~$Y$,~$H$ three rational vector f\/ields such that $[X,Y] = H$, $[H,X] = -X$ and $[H,Y] = Y$. Then there exists a rational dominant map $h \colon \Cc \dasharrow \mathbf {CP}_1$ such that $X = h^\ast\big(\frac{\partial}{\partial z}\big)$, $H = h^\ast \big(z\frac{\partial}{\partial z}\big)$ and $Y = h^\ast \big(z^2\frac{\partial}{\partial z}\big)$.
\end{Theorem}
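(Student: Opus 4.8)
The plan is to use the $\mathfrak{sl}_2$-triple $(X,Y,H)$ on $\Cc$ to build a rational map to $\mathbf{CP}_1$ directly from the action, mimicking the arc-space construction of Section~\ref{sl2} but in dimension one. First I would fix a generic point and a local coordinate $t$ on $\Cc$ in which $X = \partial/\partial t$; this is possible since $X$ is a nonzero rational vector field. In that coordinate, the bracket relations $[H,X] = -X$ and $[X,Y] = H$ force $H = (t + c)\partial/\partial t$ for some rational function $c$ with $Xc = 0$, i.e.\ $c$ is a first integral of $X$; after translating $t$ we may take $H = t\,\partial/\partial t$. Then $[H,Y] = Y$ together with $[X,Y] = H$ pins down $Y = (t^2 + bt + a)\partial/\partial t$ with $Xb = Xa = 0$ and, using $[X,Y]=H = t\partial/\partial t$, we get $b = 0$; finally $[H,Y]=Y$ gives $a=0$, so $Y = t^2\partial/\partial t$. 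Thus \emph{locally} $(X,H,Y)$ is already the standard projective triple, and $t$ is the desired coordinate.

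The real content is to promote this local normalization to a rational map $h\colon \Cc\dasharrow \mathbf{CP}_1$ defined globally (i.e.\ on a Zariski-dense open set), not just on a germ. The obstruction is monodromy: the local coordinate $t$ straightening $X$ is defined only up to the flow of $X$, and a priori could be multivalued over $\Cc$. To kill this I would produce $t$ as a genuine rational function on $\Cc$ by an invariant-theoretic argument: the three vector fields span $T\Cc$ generically and satisfy the $\mathfrak{sl}_2$ relations, so one can write down a rational function whose differential is forced by the relations to be $X$-dual, $H$-homogeneous of weight $0$, etc.; concretely, on $\Cc$ the $1$-form $\eta$ characterized by $\eta(X)=1$, $\eta(H)=t$, $\eta(Y)=t^2$ in the normalized frame is rational, and $h = \int$ of an appropriate combination. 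More robustly, I would invoke the coparallelism/Lie-connection machinery: the three vector fields give $\Cc$ the structure of (an open subset of) a $\mathfrak{sl}_2$-``parallelism'' in a weak sense, but since $\dim \Cc = 1 < \dim \mathfrak{sl}_2$, instead pass to the associated foliation on $\Cc$ and use that $\mathbf{CP}_1 = \mathrm{PSL}_2/B$ carries the model triple $(\partial_z, z\partial_z, z^2\partial_z)$; the map $h$ is then the classifying map to the quotient, rational because $\Cc$ is a curve and a dominant map between curves extends to the smooth completions.

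Concretely I expect the cleanest route is: (1) normalize the frame locally as above to get a local biholomorphism $h_0$ to $\mathbf{CP}_1$ intertwining the triples; (2) observe that $h_0$ is determined by the triple up to post-composition by an element of $\mathrm{PSL}_2(\mathbf C)$ fixing the flag, i.e.\ by the stabilizer of the triple in $\operatorname{Aut}(\mathbf{CP}_1)$, which is trivial since a M\"obius transformation commuting with $\partial_z$, $z\partial_z$ and $z^2\partial_z$ is the identity; (3) conclude that the germs of $h_0$ at different points agree on overlaps and glue to a single-valued rational function on a dense open subset of $\Cc$ — single-valuedness follows precisely because the transition data lives in the trivial group. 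The main obstacle is step~(3): making rigorous that the absence of ambiguity in the local model forces the global rationality of $h$, rather than merely a map to $\mathbf{CP}_1$ on an \'etale cover. This is exactly the point where one uses that $\Cc$ is a curve (so any dominant rational map from a smooth projective model to $\mathbf{CP}_1$ is automatically a morphism, and an \'etale cover with trivial deck group over a dense open is trivial), together with the triviality of the relevant stabilizer/Galois-type group computed as in Section~\ref{sl2}.
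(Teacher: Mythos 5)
Your local analysis and the uniqueness argument in step~(2) are sound as far as they go: the stabilizer of the model triple under local biholomorphisms of $(\mathbf C, z_0)$ is trivial (preserving $\partial_z$ forces $z\mapsto z+c$, preserving $z\partial_z$ then forces $c=0$), so the developing map $h_0$ has trivial monodromy and extends to a single-valued holomorphic map on the Zariski-open subset of $\Cc$ where the frame is regular. The genuine gap is in step~(3): single-valuedness on the complement of finitely many points does \emph{not} yield rationality. A holomorphic map from a punctured curve to $\mathbf{CP}_1$ may have essential singularities at the punctures (think of $e^{1/w}$ on $\mathbf A^1\setminus\{0\}$), and your appeal to ``a dominant rational map from a smooth projective model is automatically a morphism'' is circular, since it presupposes the rationality you are trying to prove. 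Your intermediate suggestion ``$h=\int$ of an appropriate combination'' points the wrong way for the same reason: primitives of rational $1$-forms are transcendental in general.

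The missing observation --- presumably the ``elementary'' proof the paper attributes to Loray--Pereira--Touzet and does not reproduce --- is that $h$ is not obtained by integration at all: the two conditions $\mathrm{d}h(X)=1$ and $\mathrm{d}h(H)=h$ force $h$ to be the unique rational function $f\in\mathbf C(\Cc)$ with $H=fX$ (on a curve every rational vector field is a $\mathbf C(\Cc)$-multiple of the nonzero field $X$), which is manifestly rational and nonconstant. One then checks everything from the brackets: $[H,X]=[fX,X]=-X(f)X=-X$ gives $X(f)=1$; writing $Y=gX$, $[X,Y]=X(g)X=H$ gives $X(g)=f$, and $[H,Y]=(f^2-g)X=Y$ gives $g=f^2/2$; hence $f_*X=\partial_z$, $f_*H=z\partial_z$ and $f_*Y=(z^2/2)\,\partial_z$. (This also exposes a normalization slip, present in the statement and reproduced in your normal-form computation of $Y$: the model triple $(\partial_z,z\partial_z,z^2\partial_z)$ satisfies $[X,Y]=2H$, not $H$, so with the stated relations the correct conclusion is $Y=h^*\big(\frac{z^2}{2}\frac{\partial}{\partial z}\big)$; note $[X,Y]=(2t+b)\partial_t$ can never equal $t\partial_t$.) Finally, be aware that the paper's own proof is entirely different: it prolongs the triple to the arc space, produces a three-dimensional variety $V$ carrying two commuting $\mathfrak{sl}_2$-parallelisms, invokes Corollary~\ref{th:pair} to obtain an isogeny with ${\rm PSL}_2(\mathbf C)$, and descends to $\Cc=V/\widetilde K\to\mathbf{CP}_1$; only its last step (pushing forward deck transformations and using the trivial stabilizer of $(\partial_z,z\partial_z)$) coincides with your step~(2).
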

Their proof is elementary. We outline here a more sophisticated proof in the case $\Cc = A^1_{\mathbf C}$ that will be generalized in the next section.
\begin{proof} Notations are the ones introduced in Section~\ref{sl2}. $\Lc$ is the space of parameterized arcs $\widehat{z} = \sum_i z^{(i)} \frac{x^i}{i!}$ on $\Cc$. The vector space $\mathbf C X + \mathbf C H + \mathbf C Y$ is denoted by $\mathfrak g$. Let $r_o\colon (\mathbf C, 0) \to A^1_\mathbf C$ be an arc with $r_o'(0) \not = 0$ and consider $V \subset \Lc$ def\/ined by
\begin{gather*}
V =\{ \widehat{z} \in \Lc \, | \, \widehat{z}^\ast \mathfrak g =r_o^\ast \mathfrak g \}.
\end{gather*}
\begin{Claim}
This is a $3$-dimensional algebraic variety.
\end{Claim}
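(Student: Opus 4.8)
The plan is to read the condition $\widehat z^*\mathfrak g=r_o^*\mathfrak g$ through the projective structure that $X$, $H$, $Y$ carve on $\Cc$, and to turn it into a triangular algebraic system in the jet coordinates $z^{(0)},z^{(1)},z^{(2)},\dots$, in the spirit of the Lemma of Section~\ref{sl2}. First, $X$, $H$, $Y$ span the tangent line at the generic point of $\Cc$ (on the curve $\Cc$ this just means they are not all identically zero, which holds since $\mathfrak g$ is three-dimensional), so the $\mathfrak g$-action is generically transitive and $\Cc=\mathbb{A}^1_{\mathbf C}$ inherits a rational projective structure: there is an algebraic (multivalued) developing map $w\colon\Cc\dasharrow\mathbf{CP}_1$ conjugating the $\mathfrak g$-action with the action of $\mathfrak p:=\big\langle\frac{\partial}{\partial w},w\frac{\partial}{\partial w},w^2\frac{\partial}{\partial w}\big\rangle$, and whose Schwarzian $\{w,z\}$ is a fixed rational function $\eta\in\mathbf C(z)$. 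Consequently, on the Zariski open locus $\Lc^\star\subset\Lc$ where $z^{(1)}\neq0$ and $z^{(0)}$ avoids the finitely many poles of $X$, $H$, $Y$ and of $\eta$, one has $\widehat z^*\mathfrak g=(w\circ\widehat z)^*\mathfrak p=\frac{1}{(w\circ\widehat z)'}\big\langle 1,\,w\circ\widehat z,\,(w\circ\widehat z)^2\big\rangle\frac{\partial}{\partial x}$, and since the normaliser of $\mathfrak p$ in the group of formal diffeomorphisms of $(\mathbf C,0)$ is ${\rm PSL}_2(\mathbf C)$, two such subalgebras, associated to arcs $u$ and $v$, coincide exactly when $u=\mu\circ v$ for some $\mu\in{\rm PSL}_2(\mathbf C)$. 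As $r_o\in V$ trivially, it follows that $\widehat z\in V$ if and only if $w\circ\widehat z$ and $w\circ r_o$ differ by post-composition with a M\"obius transformation, i.e.\ if and only if $\{w\circ\widehat z,x\}=\{w\circ r_o,x\}$ as formal power series in $x$.

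Next I would unwind this identity with the cocycle relation $\{f\circ g,x\}=(\{f,y\}\circ g)\,(g')^2+\{g,x\}$ for the Schwarzian; taking $f=w$, $g=\widehat z$,
\begin{gather*}
\{w\circ\widehat z,x\}=\eta\big(\widehat z(x)\big)\big(\widehat z'(x)\big)^2+\bigg(\frac{\widehat z'''}{\widehat z'}-\frac32\Big(\frac{\widehat z''}{\widehat z'}\Big)^2\bigg),
\end{gather*}
whose coefficient of $x^k$ is a rational function $F_k$ of $z^{(0)},\dots,z^{(k+3)}$, having as denominators only powers of $z^{(1)}$ and of the common denominator $p$ of $X$, $H$, $Y$, $\eta$ at $z^{(0)}$, and in which $z^{(k+3)}$ enters linearly with the invertible coefficient $\frac{1}{k!\,z^{(1)}}$ (its only source being the term $\widehat z'''/\widehat z'$). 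Since $r_o$ is fixed, the coefficients $\Phi_k$ of $x^k$ in $\{w\circ r_o,x\}$ are fixed constants. Hence $V=V(I)\cap\Lc^\star$, where $I\subset\mathbf C[\Lc]$ is the ideal generated by the numerators of $F_k-\Phi_k$, $k\geq0$ — the first generator being, up to a unit, $p(z^{(0)})(z^{(1)})^2 f$ with $f$ the Schwarzian expression displayed above, exactly as in Section~\ref{sl2}. Moreover this system is triangular: $F_k=\Phi_k$ reads $z^{(k+3)}=k!\,z^{(1)}(\Phi_k-G_k)$ with $G_k$ a regular function on $\operatorname{Spec}\mathbf C[z^{(0)},z^{(1)},z^{(2)}]_{z^{(1)}p(z^{(0)})}$, so every $z^{(k+3)}$ is determined recursively from $z^{(0)},z^{(1)},z^{(2)}$.

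It follows that $V$ is the graph of a morphism from $\operatorname{Spec}\mathbf C[z^{(0)},z^{(1)},z^{(2)}]_{z^{(1)}p(z^{(0)})}$ into $\prod_{k\geq3}\mathbb{A}^1$; therefore $V$ is integral of dimension $3$, with $z^{(0)},z^{(1)},z^{(2)}$ as \'etale coordinates, which is the Claim. I expect the only genuine obstacle to lie in the first paragraph — producing the developing map $w$, checking that $\{w,z\}$ is rational, and proving that equality of the two $\mathfrak{sl}_2$-subalgebras of $\widehat{\mathfrak X}$ is precisely M\"obius-equivalence of the underlying arcs; once this geometric translation is in hand, what remains is the bookkeeping already carried out in Section~\ref{sl2}. (Alternatively, $V$ is the image of a variety of the type constructed in Section~\ref{sl2} under the automorphism of $\Lc$ given by precomposition with a formal diffeomorphism of $(\mathbf C,0)$ straightening $r_o^*\mathfrak g$ to $\big\langle\frac{\partial}{\partial x},x\frac{\partial}{\partial x},x^2\frac{\partial}{\partial x}\big\rangle$, which again yields dimension three.)
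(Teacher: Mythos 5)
Your argument is correct and is essentially the route the paper intends: the paper states this Claim without proof and relies on transporting the computation of Section~3.5, where the analogous $V$ is cut out by the triangular system $E_{-1}^n\cdot f=0$ that solves each $z^{(n+3)}$ rationally in terms of $z^{(0)},z^{(1)},z^{(2)}$ over the locus $z^{(1)}p\big(z^{(0)}\big)\neq 0$. You supply the bridge the paper leaves implicit --- the developing map $w$ with rational Schwarzian, the identification of the normalizer of $\mathfrak{sl}_2\subset\widehat{\mathfrak X}$ with ${\rm PSL}_2(\mathbf C)$ so that $\widehat z^\ast\mathfrak g=r_o^\ast\mathfrak g$ becomes the level set $\{w\circ\widehat z,x\}=\{w\circ r_o,x\}$, and the cocycle identity making that level set a graph over $\big(z^{(0)},z^{(1)},z^{(2)}\big)$ --- and each of these steps is sound.
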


\begin{Claim}The prolongations $\Lc X$, $\Lc Y$ and $\Lc H$ define a $\mathfrak{sl}_2$-parallelism on $V$.
\end{Claim}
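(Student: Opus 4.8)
The plan is to mimic the construction already carried out for the $\mathfrak{sl}_2$-parallelism on the arc space of $\mathbb{A}^1_{\mathbf C}$, reading the defining equations of $V$ through the actions of $\Lc X$, $\Lc Y$, $\Lc H$ on coordinates of $\Lc$. First I would fix the arc $r_o$, producing a distinguished subspace $r_o^\ast\mathfrak g \subset \widehat{\mathfrak X}$; since $r_o'(0)\neq 0$, this subspace is a three-dimensional subalgebra of $\widehat{\mathfrak X}$ transverse to the filtration, i.e.\ after the change of coordinate induced by $r_o$ it is the standard copy $\mathbf C\frac{\partial}{\partial x}+\mathbf C\, x\frac{\partial}{\partial x}+\mathbf C\, x^2\frac{\partial}{\partial x}$. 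The condition $\widehat z^\ast\mathfrak g = r_o^\ast\mathfrak g$ then says that the prolongations $\Lc X$, $\Lc Y$, $\Lc H$ (equivalently $X$, $Y$, $H$ themselves, composed with $\widehat z$) span the same three-dimensional space of formal vector fields at each arc of $V$, which is a closed system of polynomial equations on the coordinates $z^{(i)}$.

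\medskip

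For the first claim, the strategy is the same as in the Schwarzian computation: the three conditions ``$\widehat z^\ast X$, $\widehat z^\ast Y$, $\widehat z^\ast H$ lie in the span of the standard $E_{-1}, E_0, E_1$'' amount, after using $z^{(1)}, z^{(2)}, \dots$ as coordinates, to expressing all higher jets $z^{(i)}$ for $i\geq 3$ as rational functions of $z^{(0)}, z^{(1)}, z^{(2)}$ — essentially because $\mathfrak{sl}_2$ acting on $\mathbb{A}^1$ has a third-order differential invariant (the Schwarzian relative to the ``potential'' determined by $r_o$), and vanishing of that invariant together with its $E_{-1}$-derivatives cuts out exactly a three-dimensional locus. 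So one checks that the ideal $I$ generated by these conditions is $E_{-1}$-invariant, that modulo $I$ the functions $z^{(0)},z^{(1)},z^{(2)}$ form étale coordinates, and hence $V=\mathrm{Zero}(I)$ is a smooth irreducible $3$-dimensional subvariety of $\Lc$ (on a Zariski-open subset where $z^{(1)}\neq 0$).

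\medskip

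For the second claim, the key input is the invariance property $(\Lc f)^\ast\sigma = \sigma$ of the Cartan $1$-form $\sigma$, together with $\mathrm d\sigma = -\frac12[\sigma,\sigma]$, established in Section~\ref{sl2parallelisms}. Since the defining equation of $V$ is $\widehat z^\ast\mathfrak g = r_o^\ast\mathfrak g$, an infinitesimal deformation of an arc inside $V$ is a tangent vector $v\in T_{\widehat z}\Lc$ such that the corresponding flow preserves $\mathfrak g$; but $\sigma$ translates such $v$ into an element of $\widehat{\mathfrak X}$, and preservation of $\mathfrak g$ is exactly the statement that $\sigma(v)$ normalizes $\mathfrak g$ in $\widehat{\mathfrak X}$. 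One then has to observe that the normalizer of the standard $\mathfrak{sl}_2$ in $\widehat{\mathfrak X}$ is $\mathfrak{sl}_2$ itself (the three-dimensional simple subalgebra is its own normalizer among formal vector fields), so $\sigma$ restricts to an isomorphism $\sigma|_V\colon TV \xrightarrow{\sim} \mathfrak{sl}_2(\mathbf C)\subset\widehat{\mathfrak X}$. Because $\sigma$ satisfies the Maurer--Cartan equation and $\sigma|_V$ is an isomorphism onto a Lie subalgebra, $\omega:=\sigma|_V$ is a coparallelism of type $\mathfrak{sl}_2$ on $V$; dually, $\Lc X$, $\Lc Y$, $\Lc H$ are precisely the vector fields $\omega^{-1}$ sends the standard generators to, and they are tangent to $V$ since $V$ is $\mathfrak g$-invariant (the action of $\mathfrak g$ on $\Lc$ commutes with prolongations of biholomorphisms, hence preserves the condition $\widehat z^\ast\mathfrak g = r_o^\ast\mathfrak g$). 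This gives the $\mathfrak{sl}_2$-parallelism on $V$.

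\medskip

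The main obstacle is the transversality/coordinate bookkeeping in the first claim: one must verify that the infinitely many conditions coming from ``$\widehat z^\ast X,\widehat z^\ast Y,\widehat z^\ast H\in r_o^\ast\mathfrak g$'' collapse, via the $E_{-1}$-invariance of the ideal and the $\mathfrak{sl}_2$-relations $E_0\cdot f\in I$, $E_1\cdot f\in I$ as in the Schwarzian Lemma, to finitely many equations determining a genuinely $3$-dimensional reduced variety, rather than something of the wrong dimension or non-reduced. The rest — the normalizer computation in $\widehat{\mathfrak X}$ and the transport of the Maurer--Cartan equation — is conceptually the same as in Section~\ref{sl2} and should go through verbatim.
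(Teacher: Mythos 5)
The paper states this Claim without proof (it is one step of an avowed ``outline''), so I am judging your argument on its own terms; your overall strategy --- the Cartan form $\sigma$ of Section~\ref{sl2parallelisms}, the normalizer computation, the reduction to the Schwarzian ideal --- is exactly the machinery the paper has set up. There is, however, one genuine error at the decisive step. You assert that $\Lc X$, $\Lc Y$, $\Lc H$ are ``precisely the vector fields $\omega^{-1}$ sends the standard generators to,'' where $\omega=\sigma|_V$. This is false. The $\sigma$-preimages of constant elements of $\widehat{\mathfrak X}$ are the fundamental vector fields of the reparameterization (right-composition) action, e.g.\ $\sigma^{-1}\big(\frac{\partial}{\partial x}\big)=\sum_i z^{(i+1)}\frac{\partial}{\partial z^{(i)}}=E_{-1}$, not the prolongations. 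A prolongation instead satisfies $\sigma(\Lc X)|_{\widehat z}=\widehat z^{\,\ast}X$, which on $V$ takes values in the fixed subalgebra $r_o^\ast\mathfrak g$ but is \emph{not} constant; what the invariance $(\Lc f)^\ast\sigma=\sigma$ gives is ${\rm Lie}_{\Lc X}\sigma=0$, i.e.\ that $\Lc X$ commutes with the $\sigma$-constant frame. This is precisely the content of the paper's next Claim: $\sigma|_V$ is the parallelism form \emph{reciprocal} to the parallelism $\Lc X$, $\Lc H$, $\Lc Y$. Your identification collapses the two commuting parallelisms of $V$ into one, and if carried forward it would invalidate the third Claim and the subsequent appeal to Corollary~\ref{th:pair}, which rests on $V$ carrying two genuinely distinct commuting $\mathfrak{sl}_2$-parallelisms.

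The Claim itself survives, and your own ingredients suffice once the mis-identification is removed: (i) tangency of $\Lc X$, $\Lc Y$, $\Lc H$ to $V$, which you prove correctly from the invariance of the condition $\widehat z^{\,\ast}\mathfrak g=r_o^\ast\mathfrak g$ under the flows of elements of $\mathfrak g$; (ii) pointwise linear independence on the locus of invertible arcs, because a nonzero element of $\mathfrak g$ pulls back to a nonzero formal vector field under an invertible $\widehat z$; (iii) $\dim V=3$, from your (correct) observation that $T_{\widehat z}V=\sigma^{-1}\big(\widehat N(r_o^\ast\mathfrak g)\big)$ and that $\mathfrak{sl}_2$ is its own normalizer in $\widehat{\mathfrak X}$, or alternatively from the first Claim; and (iv) the Lie algebra structure, which needs no appeal to the Maurer--Cartan equation for $\sigma$ since prolongation is a morphism of Lie algebras, $[\Lc X,\Lc Y]=\Lc[X,Y]=\Lc H$, etc. Points (i)--(iv) together prove the Claim.
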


Let us describe the canonical structure of $\Lc$ (see \cite[pp.~11--12]{guillemin-sternberg} or next section for a dif\/ferent presentation).
For $k$ an integer greater or equal to~$-1$, let us consider the vector f\/ield on~$\Lc$
\begin{gather*}E_k = \sum_{i\geq k} \frac{i !}{(i-k-1)!} z^{(i-k)}\frac{\partial}{\partial z^{(i)}}.\end{gather*}
We def\/ine a morphism of Lie algebra $\rho\colon \widehat{\mathfrak X} \to \mathfrak X(\Lc)$ by $x^{k+1}\frac{\partial}{\partial x} \mapsto E_k$ and the adic continuity.

\begin{Claim}The Cartan form $\sigma$ $($as defined in Section~{\rm \ref{sl2parallelisms})} restricted to $V$ takes values in the Lie algebra $r_0^*(\mathfrak g)$. It is the parallelism form reciprocal to the parallelism~$\Lc X$,~$\Lc H$ and~$\Lc Y$ of~$V$.
\end{Claim}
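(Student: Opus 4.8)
The plan is to reduce the Claim to two structural properties of the Cartan form $\sigma$ of Section~\ref{sl2parallelisms}, and then to quote Lemma~\ref{Lemma1}. The first property, which I would extract from the construction of $\sigma$, is that for a vector field $W$ on $\Cc$ the prolongation $\Lc W$ satisfies $\sigma(\Lc W)_{\widehat z}=\widehat z^{\,*}W$ at every arc $\widehat z$ with $z^{(1)}\neq0$: $\sigma$ evaluated on $\Lc W$ returns $W$ read in the coordinate supplied by $\widehat z$, so post-composition by the flow of $W$ is registered by $\sigma$ as the induced reparametrization on the source. The second property is that, since the flow of $W$ consists of biholomorphisms of $\Cc$, differentiating $(\Lc f)^{\ast}\sigma=\sigma$ along that flow gives ${\rm Lie}_{\Lc W}\sigma=0$; that is, every prolonged vector field is an infinitesimal symmetry of $\sigma$. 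With these in hand the statement splits into (a) $\sigma|_V$ is a coparallelism of $V$ of type $r_o^{\,*}(\mathfrak g)\cong\mathfrak{sl}_2(\mathbf C)$, and (b) this coparallelism is reciprocal to the parallelism $\Lc X,\Lc H,\Lc Y$.

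First I would prove (a). The Claims above state that $V$ is three-dimensional and that $\Lc X,\Lc H,\Lc Y$ form an $\mathfrak{sl}_2$-parallelism of $V$, in particular spanning $TV$ generically. At a generic $\widehat z\in V$ the first property gives $\sigma(\Lc X)_{\widehat z}=\widehat z^{\,*}X\in\widehat z^{\,*}\mathfrak g=r_o^{\,*}\mathfrak g$, and likewise for $H$ and $Y$. Since $X,H,Y$ are independent and $\widehat z$ is \'etale, so that $W\mapsto\widehat z^{\,*}W$ is injective, these three images form a basis of $r_o^{\,*}\mathfrak g$ (which is a Lie subalgebra of $\widehat{\mathfrak X}$ isomorphic to $\mathfrak{sl}_2(\mathbf C)$ because $r_o$ is \'etale); as $\dim T_{\widehat z}V=3$ and $\sigma_{\widehat z}$ is injective, $\sigma_{\widehat z}$ restricts to a linear isomorphism $T_{\widehat z}V\to r_o^{\,*}\mathfrak g$. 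The bracket-compatibility condition for a coparallelism form is the Maurer--Cartan equation $d\sigma=-\frac12[\sigma,\sigma]$ read on a $\sigma$-horizontal frame and restricted to $V$; hence $\sigma|_V$ is a rational coparallelism of type $r_o^{\,*}(\mathfrak g)$.

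Then I would prove (b). Let $\nabla^{\sigma}$ be the connection associated to the coparallelism $\sigma|_V$ and $\nabla$ the connection associated to the parallelism $\rho\colon\mathfrak g\to\mathfrak X(V)$, $X\mapsto\Lc X$, $H\mapsto\Lc H$, $Y\mapsto\Lc Y$, so that $\Lc X,\Lc H,\Lc Y$ is by definition a horizontal frame of $\nabla$. By the second property these are infinitesimal symmetries of $\sigma|_V$, so by Lemma~\ref{Lemma1} they are $(\nabla^{\sigma})^{\rm rec}$-horizontal vector fields, and being a frame they form a horizontal frame of $(\nabla^{\sigma})^{\rm rec}$. A flat connection is determined by any of its global horizontal frames, so $(\nabla^{\sigma})^{\rm rec}=\nabla$ and therefore $\nabla^{\sigma}=\nabla^{\rm rec}$; this is exactly the statement that $\sigma|_V$ is the parallelism form reciprocal to $\Lc X,\Lc H,\Lc Y$.

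The one step that really needs care is the first property of $\sigma$: establishing $\sigma(\Lc W)_{\widehat z}=\widehat z^{\,*}W$ from the terse construction in Section~\ref{sl2parallelisms} requires tracking how the prolongation of a vector field on $\Cc$ interacts with the ``arc frame'' defining $\sigma$. The cleanest route is probably to note that near an \'etale arc $\widehat z$ every nearby arc is uniquely $\widehat z\circ\psi$ for a reparametrization $\psi$, that $\sigma$ sends a tangent vector to the velocity of the corresponding path $\psi_t$ in reparametrizations, and that for $\Lc W$ this path is $\psi_t=\widehat z^{-1}\circ\phi^W_t\circ\widehat z$, whose velocity at $t=0$ is $\widehat z^{\,*}W$. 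Granting this, the preceding Claim ($\Lc X,\Lc H,\Lc Y$ an $\mathfrak{sl}_2$-parallelism of $V$), and Lemma~\ref{Lemma1}, everything else is formal.
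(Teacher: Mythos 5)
The paper asserts this Claim without proof (it sits inside an outlined argument), so there is no written proof to compare against; your proposal is correct and assembles exactly the ingredients the paper sets up for this purpose. The key identity $\sigma(\Lc W)_{\widehat z}=\widehat z^{\,*}W$, which you rightly isolate as the only delicate step, is precisely the content of the paper's remark that $\sigma$ realizes the $\widehat{\mathfrak X}$-action commuting with prolonged biholomorphisms, and your derivation of it (reading a tangent vector at an \'etale arc as the velocity of a source reparametrization $\psi_t=\widehat z^{-1}\circ\phi^W_t\circ\widehat z$) is the intended one; combined with $(\Lc f)^{\ast}\sigma=\sigma$, ${\rm d}\sigma=-\frac12[\sigma,\sigma]$, the preceding Claims, and Lemma~\ref{Lemma1}, it gives both halves of the statement as you describe.
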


Using Corollary \ref{th:pair}, $V$ is isogeneous to ${\rm PSL}_2(\mathbf C)$ as def\/ined in Def\/inition~\ref{isogenous}. For $p\in M$, $V_p = \{ \widehat{z} \in V \ |\ \widehat{z}(0)=p\}$ are homogeneous spaces for the action of $\widetilde{K} = \{\varphi \colon (\mathbf C,0) \to (\mathbf C,0) \, |\, r_o\circ \varphi \in V\}$, i.e., $\Cc = V/\widetilde{K}$. Let $K$ be the subgroup of ${\rm PSL}_2(\mathbf C)$ of upper triangular matrices.

\begin{Claim}The actions of $\widetilde{K}$ on $V$ and the right action of $K$ on ${\rm PSL}_2(\mathbf C)$ are conjugated by the isogeny.
\end{Claim}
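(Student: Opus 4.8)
The plan is to trace through the chain of isogenies and group actions built so far and identify the claimed conjugation on the group side. First I would set up the correspondence explicitly: the isogeny from the previous claim identifies $V$ (up to Zariski-open subsets) with ${\rm PSL}_2(\mathbf C)$ equipped with its two canonical parallelisms by left- and right-invariant vector fields, via a rational map $F\colon V \dashrightarrow {\rm PSL}_2(\mathbf C)$ with $F_*(\Lc X, \Lc H, \Lc Y)$ equal to the left-invariant parallelism of type $\mathfrak{sl}_2$ and $F^*$ of the Maurer--Cartan form equal to $\sigma|_V$ (reciprocal parallelism). The point is that the subgroup $\widetilde K$ acting on $V$ acts by symmetries of the parallelism $\Lc X, \Lc H, \Lc Y$ — because its elements are prolongations ${\mathscr L}\varphi$ of germs fixing $r_o^*\mathfrak g$, hence by invariance of the Cartan form $\sigma$ they preserve $\sigma|_V$, i.e.\ they are automorphisms of $(V, \sigma|_V)$ — and symmetries of a parallelism are precisely the horizontal vector fields / flows of the reciprocal connection. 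Under the isogeny, symmetries of $(V,\sigma|_V)$ correspond to symmetries of the right-invariant parallelism on ${\rm PSL}_2$, i.e.\ to left translations; so the $\widetilde K$-action is conjugated to some subgroup acting by left translations on ${\rm PSL}_2(\mathbf C)$.

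Next I would pin down which subgroup. The quotient $\Cc = V/\widetilde K$ is one-dimensional, so $\widetilde K$ is $2$-dimensional; on the ${\rm PSL}_2$ side, a $2$-dimensional subgroup acting by left translations whose quotient is to be identified with $\mathbf{CP}_1$ must be (a conjugate of) the Borel $K$ of upper triangular matrices, since $\mathbf{CP}_1 = K\backslash {\rm PSL}_2(\mathbf C)$. To see it is exactly $K$ and not a conjugate, I would use the choice of the arc $r_o$ and the normalization at the base point: $\widetilde K$ is the stabilizer of a point-type condition (germs $\varphi$ with $\varphi(0)=0$, composed on the right), and its linearization at the marked arc acts on $T$ in a way that matches the stabilizer of a point in $\mathbf{CP}_1$ under the standard $\mathfrak{sl}_2$-action $E_{-1}=\partial_z$, $E_0 = z\partial_z$, $E_1 = z^2\partial_z$ — precisely the parabolic fixing $\infty$ (or $0$, depending on conventions), whose Lie algebra is $\langle E_0, E_1\rangle$, i.e.\ upper triangular. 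Then I would invoke the standard fact that a birational self-map of ${\rm PSL}_2(\mathbf C)$ intertwining left translations and conjugating the canonical parallelisms is itself (right translation by) a group element, so the isogeny can be adjusted to carry the $\widetilde K$-action to the genuine right $K$-action, and the induced map on quotients is the desired $h\colon \Cc \dashrightarrow \mathbf{CP}_1$ with $h^*(\partial_z) = X$, $h^*(z\partial_z) = H$, $h^*(z^2\partial_z) = Y$.

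The main obstacle I expect is the last bookkeeping step: showing that the isogeny of parallelized varieties — which a priori only matches the parallelisms and need not be equivariant for the two group actions on the nose — can be chosen (or corrected by a translation) so that the $\widetilde K$-action and the right $K$-action correspond exactly, rather than merely up to an outer twist or up to conjugation by an element of the normalizer. Concretely, one must check that the identification of $\widetilde K$ with $K$ as abstract groups, together with the matching of their actions via symmetries of the reciprocal connection, forces the map on quotients to be well-defined and dominant; this is where the centerless hypothesis on $\mathfrak{sl}_2$ (used via Corollary~\ref{th:pair}) and the rigidity of the Maurer--Cartan form do the work. The remaining verifications — that $h$ has the stated pullback property — are then formal, since $h^*$ applied to the three model vector fields recovers the projections of $\Lc X$, $\Lc H$, $\Lc Y$ to $\Cc$, which by construction of $V$ and of the parallelism are exactly $X$, $H$, $Y$.
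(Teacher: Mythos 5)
The paper itself gives no argument for this Claim --- it is one of four unproved claims inside an outlined ``more sophisticated proof'' --- so there is no official proof to compare against; your overall strategy (transport the $\widetilde K$-action through the parallelism-conjugating isogeny and identify the target subgroup by its Lie algebra, which is the Borel because it is the stabilizer of the origin in the standard $\mathfrak{sl}_2$-action) is surely the intended one. But the execution of the first, crucial step is wrong. The elements of $\widetilde K$ are germs $\varphi\colon(\mathbf C,0)\to(\mathbf C,0)$ acting on $V$ by \emph{reparametrization at the source}, $S\varphi\colon\widehat z\mapsto \widehat z\circ\varphi$; they are not prolongations ${\mathscr L}\varphi$ of biholomorphisms of the target, and the invariance $({\mathscr L}f)^*\sigma=\sigma$ does not apply to them. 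By Theorem~\ref{canonique} one has instead $(S\varphi)^*\sigma=\varphi^*\circ\sigma$, so $\widetilde K$ does \emph{not} preserve $\sigma|_V$ (it moves it by the adjoint action of $\varphi$ on $r_o^*\mathfrak g$, which is nontrivial since $\mathfrak{sl}_2$ is centerless). What $\widetilde K$ does preserve is each of ${\Lc}X$, ${\Lc}H$, ${\Lc}Y$, because right composition commutes with left composition --- but that is the statement that $\widetilde K$ consists of symmetries of the parallelism, which you have conflated with preserving its \emph{reciprocal} coparallelism $\sigma|_V$. The same confusion resurfaces as the left/right mixup: symmetries of the right-invariant parallelism on ${\rm PSL}_2(\mathbf C)$ are right translations, not left translations, and it is the right action of $K$ (generated by \emph{left}-invariant fields in $\mathfrak k$) that the claim asserts.

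The correct short route avoids the finite transformations altogether: the infinitesimal generators of the $\widetilde K$-action on $V$ are exactly $\rho(\xi)$ for $\xi\in\mathfrak k=r_o^*(\mathfrak g)\cap\widehat{\mathfrak X}^0=\langle x\partial_x, x^2\partial_x\rangle$, i.e.\ the fields $E_0|_V$, $E_1|_V$, which are elements of the reciprocal parallelism $\sigma^{-1}$. The isogeny of Corollary~\ref{th:pair} conjugates the coparallelism $\sigma|_V$ with the Maurer--Cartan form $\theta$ of ${\rm PSL}_2(\mathbf C)$, hence conjugates $\sigma^{-1}(\xi)$ with the left-invariant field $\theta^{-1}(\xi)$ for every $\xi\in\mathfrak{sl}_2$; in particular it conjugates $E_0$, $E_1$ with the left-invariant fields spanning the upper-triangular subalgebra, whose flows generate precisely the right action of $K$. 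Since both $\widetilde K$ and $K$ are connected, conjugation of the infinitesimal generators yields conjugation of the group actions. Your closing concern about correcting the isogeny by a translation is legitimate (it amounts to normalizing base points so that $r_o$ corresponds to the identity coset), but it is secondary to repairing the identification of the action itself.
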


This induces an isogeny between $\Cc$ and $\mathbf {CP}_1$. Let $\pi_1$ and $\pi_2$ be the two maps of the isogeny. A local transformation $\varphi$ such that $\pi_1\circ \varphi = \pi_1$ satisf\/ies $\varphi^\ast \pi_1^\ast(X,H,Y) = \pi_1^\ast(X,H,Y)$ and the same is true for the push-forward $(\pi_2)_\ast \varphi$ of $\varphi$ on $\mathbf {CP}_1$. Then $(\pi_2)_\ast \varphi$ preserves $\frac{\partial}{\partial z}$ and $z\frac{\partial}{\partial z}$. It is the identity. This f\/inishes the proof.
\end{proof}

\subsection{Some jet spaces}\label{some_jet_spaces}

Let $M$ be a $n$-dimensional af\/f\/ine variety. The space of parameterized subspaces of $M$ is the set of formal maps: $ M^{[n]} = \{ r\colon (\mathbf C^n,0) \to M \}$. Like the arc space, it has a natural structure of pro-algebraic variety. We will give the construction of its coordinate ring following \cite[Section~2.3.2, p.~80]{beilinson-drinfeld}. Let $\mathbf C[\partial_1, \ldots, \partial_n]$ be the $\mathbf C$-vector space of linear partial dif\/ferential operators with constant coef\/f\/icients. The coordinate ring of $M^{[n]}$ is $\operatorname{Sym}(\mathbf C[M]\otimes \mathbf C[\partial_1,\ldots,\partial_n]) / \mathcal L$ where
\begin{itemize}\itemsep=0pt
 \item the tensor product is a tensor product of $\mathbf C$-vector spaces;
 \item $\operatorname{Sym}( V )$ is the $\mathbf C$-algebra generated by the vector space $V$;
 \item $\mathbf C[M]\otimes \mathbf C[\partial_1,\ldots, \partial_n]$ has a structure of $\mathbf C[\partial_1,\ldots,\partial_n]$-module {\it via}
 the right composition of dif\/ferential operators;
 \item $\operatorname{Sym}(\mathbf C[M]\otimes \mathbf C[\partial_1,\ldots, \partial_n])$ has the induced structure of $\mathbf C[\partial_1,\ldots,\partial_n]$-algebra;
 \item the Leibniz ideal $\mathcal L$ is the $\mathbf C[\partial_1,\ldots, \partial_n]$-ideal generated by $fg\otimes 1 - (f\otimes1)(g\otimes1)$ for all $(f,g) \in \mathbf C[M]^2$ and by $1 - 1\otimes 1$.
 \end{itemize}
Local coordinates $(z_1, \ldots, z_n)$ on $M$ induce local coordinates on $M^{[n]}$ {\it via} the Taylor expansion of maps $r$ at $0$
\begin{gather*}
r(x_1\ldots, x_n) = \left( \sum_{\alpha \in \mathbf N^n} r_1^{\alpha} \frac{x^\alpha}{\alpha!}, \ldots,\sum_{\alpha \in \mathbf N^n} r_n^{\alpha} \frac{x^\alpha}{\alpha!} \right).
\end{gather*}
One denotes by $z_i^{\alpha}\colon M^{[n]} \to \mathbf C$ the function def\/ined by $z_i^{\alpha}(r) = r_i^{\alpha}$. This function is the element $z_i\otimes \partial^\alpha$ in~$\mathbf C[M^{[n]}]$.

\subsubsection{Prolongation of vector f\/ields}
Any derivation $Y$ of $\mathbf C[M]$ can be trivially extended to a derivation of $\operatorname{Sym}(\mathbf C[M]\otimes \mathbf C[\partial_1, \ldots, \partial_n])$. It preserves the ideal generated by $fg\otimes 1 - (f\otimes1)(g\otimes1)$ for all $(f,g) \in \mathbf C[M]^2$ and by $1 - 1\otimes 1$ and commutes with the action of $\mathbf C[\partial_1, \ldots, \partial_n]$ then it preserves the Leibniz ideal and def\/ines a~derivation of $\mathbf C[M^{[n]}]$. This derivation is called the prolongation of $Y$, and it is denoted by~$Y^{[n]}$.

The same procedure can be used to def\/ine the prolongation of analytic or formal vector f\/ields on~$M$ to~$M^{[n]}$.

\subsubsection{The canonical structure}\label{canonicalst}

The jet space $M^{[n]}$ is endowed with a dif\/ferential structure on its coordinate ring and with a group action by ``reparameterizations''. The compatibility condition between these two structures is well-known (see \cite[pp.~11--23]{guillemin-sternberg}) and is easily obtained using the construction above.

The action of $\partial_j\colon \mathbf C[M^{[n]}] \to \mathbf C[M^{[n]}]$ can be written in local coordinates and gives the total derivative operator $\sum_{i,\alpha} z_i^{\alpha + 1_j} \frac{\partial}{\partial z_i^{\alpha}}$. It is the dif\/ferential structure of the jet space. The pro-algebraic group \begin{gather*}\Gamma = \big\{ \gamma\colon (\mathbf C^n,0) \overset{\sim}{\rightarrow} (\mathbf C^n,0); \text{ formal invertible}\big\}\end{gather*}
 acts on $M^{[n]}$.This action is denoted by $S \gamma (r) = r\circ \gamma$.

These two actions arise from the action of the Lie algebra $\widehat{\mathfrak{X}} = \bigoplus \mathbf C[[x_1,\ldots,x_n]]\partial_i$ on $M^{[n]}$. This action is described on the coordinate ring in the following way. For $\xi \in \widehat{\mathfrak{X}}$, $f\in \mathbf C[M]$ and $P \in \mathbf C[\partial_1,\ldots,\partial_n]$, we def\/ine $\xi \cdot ( f \otimes P) = f\otimes (P\circ \xi)|_0$ where the composition is evaluated in~$0$ in order to get an element of $\mathbf C[\partial_1,\ldots,\partial_n]$. The action of $\bigoplus \mathbf C \partial_i$ is the dif\/ferential structure. The action of $\widehat{\mathfrak{X}}^0 = \mathfrak{lie}(\Gamma)$, the Lie subalgebra of vector f\/ields vanishing at $0$ is the inf\/initesimal part of the action of $\Gamma$.

 \begin{Theorem}[\cite{guillemin-sternberg}]\label{canonique} Let $M^{[n]\ast}$ be the open subset of submersions. The action above gives a~canonical form $\sigma\colon T M^{[n]\ast} \to \widehat{\mathfrak{X}}$ satisfying:
\begin{itemize}\itemsep=0pt
\item for any $r \in M^{[n]\ast}$, $\sigma$ is a isomorphism from $T_r M^{[n]\ast}$ to $\widehat{\mathfrak{X}}$;
\item for any $\gamma \in \Gamma$, $(S\gamma)^\ast \sigma = \gamma^\ast \circ \sigma$;
\item $d\sigma = -\frac{1}{2}[\sigma, \sigma]$.
\end{itemize}
 \end{Theorem}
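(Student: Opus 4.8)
The plan is to obtain $\sigma$ as the inverse of the infinitesimal action of $\widehat{\mathfrak X}$ on $M^{[n]\ast}$ described just above, and then to deduce the three properties almost formally. First I would check that the $\widehat{\mathfrak X}$-action on $M^{[n]}$ restricts to the open subset $M^{[n]\ast}$ of submersions: the flow of $\xi\in\widehat{\mathfrak X}$ is a one-parameter family of formal isomorphisms of the germ $(\mathbf C^n,0)$ (the source point being allowed to move), and precomposing a submersion by such an isomorphism again gives a submersion. Writing $\widehat\xi\in\mathfrak X(M^{[n]\ast})$ for the fundamental vector field of $\xi$, the explicit description of the action on the coordinate ring $(\xi\cdot(f\otimes P)=f\otimes(P\circ\xi)|_0)$ shows that $\xi\mapsto\widehat\xi$ is a morphism of Lie algebras into the (pro-)regular vector fields on $M^{[n]\ast}$.

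The key step is to show that for each $r\in M^{[n]\ast}$ the evaluation map $\widehat{\mathfrak X}\to T_rM^{[n]\ast}$, $\xi\mapsto\widehat\xi_r$, is a linear isomorphism; $\sigma_r$ is then \emph{defined} as its inverse, which gives the first bullet. I would argue this in a chart: since $r$ is a formal submersion and $\dim M=n$, a formal inverse function theorem identifies $r$ with a formal isomorphism of $(\mathbf C^n,0)$ onto the formal neighbourhood of $r(0)$ in $M$; choosing coordinates on $M$ that trivialise that neighbourhood so that $r$ becomes the identity jet, the action of $\widehat{\mathfrak X}$ near $r$ becomes its standard action on formal maps near the identity by precomposition, which is transitive and free, with linearisation at the identity equal, up to sign, to the identity map of $\widehat{\mathfrak X}=T_rM^{[n]\ast}$. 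The same chart shows that, order by order in the pro-algebraic filtration, $\sigma$ is obtained by inverting matrices that depend polynomially on the jet coordinates of $r$ and are invertible exactly on $M^{[n]\ast}$, so $\sigma$ is a regular $\widehat{\mathfrak X}$-valued form there.

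Equivariance I would deduce from associativity of composition: $(r\circ\phi^\xi_t)\circ\gamma=(r\circ\gamma)\circ(\gamma^{-1}\circ\phi^\xi_t\circ\gamma)$ and $\gamma^{-1}\circ\phi^\xi_t\circ\gamma$ is the flow of $\gamma^\ast\xi$, hence $d(S\gamma)(\widehat\xi_r)=\widehat{(\gamma^\ast\xi)}_{S\gamma(r)}$; applying $\sigma$ at $S\gamma(r)$ and using that the $\widehat\xi_r$ span $T_rM^{[n]\ast}$ gives $(S\gamma)^\ast\sigma=\gamma^\ast\circ\sigma$. For the structure equation I would evaluate on fundamental vector fields: since $\sigma(\widehat\xi)=\xi$ and $\sigma(\widehat\eta)=\eta$ are constant and $\xi\mapsto\widehat\xi$ preserves brackets, the Cartan formula for $d\sigma$ collapses to
\begin{gather*}
d\sigma(\widehat\xi,\widehat\eta)=-\sigma([\widehat\xi,\widehat\eta])=-\sigma(\widehat{[\xi,\eta]})=-[\xi,\eta]=-\frac{1}{2}[\sigma,\sigma](\widehat\xi,\widehat\eta),
\end{gather*}
and, these vector fields spanning the tangent space at every point, this forces $d\sigma=-\frac{1}{2}[\sigma,\sigma]$. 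In essence $\sigma$ is the Maurer--Cartan form of the (formally) simply transitive action of $\widehat{\mathfrak X}$ on $M^{[n]\ast}$, and the structure equation is automatic.

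I expect the only real difficulty to lie in the pro-algebraic bookkeeping: verifying carefully that the action preserves $M^{[n]\ast}$, that a formal submersion between spaces of equal dimension is genuinely a formal isomorphism onto the formal neighbourhood of its image, and that the resulting inverse form is regular rather than merely formal on $M^{[n]\ast}$. Once those points are nailed down, the isomorphism property is a one-chart computation and both the equivariance and the Maurer--Cartan equation fall out of the functoriality of precomposition and of $\xi\mapsto\widehat\xi$ being a Lie algebra morphism.
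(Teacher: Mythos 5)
The paper does not prove Theorem~\ref{canonique}: it is imported verbatim from Guillemin--Sternberg, and the text only records the explicit formulas (the vector fields $E_k$ on the arc space, the action $\xi\cdot(f\otimes P)=f\otimes(P\circ\xi)|_0$ on the coordinate ring) without deriving the three properties. So there is nothing in the paper to compare against line by line; judged on its own, your argument is correct and is the standard proof, and it is consistent with the paper's description of $\sigma$ in the $n=1$ case (rewriting a tangent vector at $\widehat p$ in the formal coordinates induced by $\widehat p$ itself is exactly inverting the infinitesimal precomposition action). Two points deserve to be made explicit rather than left implicit. First, the sign in $d\sigma=-\frac12[\sigma,\sigma]$ hinges on $\xi\mapsto\widehat\xi$ being a Lie algebra \emph{morphism} and not an anti-morphism; this is true precisely because $S\gamma(r)=r\circ\gamma$ is a right action ($S(\gamma_1\circ\gamma_2)=S\gamma_2\circ S\gamma_1$), and the paper asserts the morphism property for $\rho\colon\widehat{\mathfrak X}\to\mathfrak X(\mathscr L)$, so your Cartan-formula computation is sound, but you should say where the morphism (as opposed to anti-morphism) convention comes from. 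Second, your ``pro-algebraic bookkeeping'' worry is genuine but benign: order by order the evaluation map $\xi\mapsto\widehat\xi_r$ is block-triangular with diagonal blocks built from $j_1r$, so it is invertible exactly on the submersion locus and $\sigma$ is rational on $M^{[n]}$, regular on $M^{[n]\ast}$ --- though, as the paper warns, $\sigma$ and the $\widehat\xi$ are not compatible with the projective system (e.g., $E_{-1}$ has degree $+1$), so ``isomorphism'' must be read at the level of the full pro-objects rather than of each finite jet level.
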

 These equalities are {\it not} compatible with the projective systems.

\subsection{Normal Lie algebras of vectors f\/ields}

Without lost of generality, we should
\begin{enumerate}\itemsep=0pt
\item[1)] identify $\mathfrak g$ with its image in $\mathfrak X(M)$;
\item[2)] replace $M$ by a Zariski open subvariety on which $\mathfrak g$ is def\/ined and of maximal rank at any point.
\end{enumerate}
If $p\in M$ one can identify $\mathfrak g$ with a Lie subalgebra of $\widehat{\mathfrak X}(M,p)$, the Lie algebra of formal vector f\/ields on~$M$ at~$p$.

\begin{Definition} For a Lie subalgebra $\mathfrak g \subset \mathfrak X[M]$, its normalizer at $p\in M$ is \begin{gather*} \widehat{N}(\mathfrak g,p) = \big\{ Y \in \widehat{\mathfrak X}(M,p) \, |\, Y,\mathfrak g] \subset \mathfrak g\big\}.\end{gather*}
\end{Definition}

\begin{Definition}A Lie subalgebra $\mathfrak g \subset \mathfrak X[M]$ is said to be normal if for generic $p \in M$ on has $ \widehat{N}(\mathfrak g,p) = \mathfrak g$.
\end{Definition}

\begin{Lemma} If $\mathfrak g$ is transitive then the Lie algebra $ \widehat{N}(\mathfrak g,p) $ is finite-dimensional.
\end{Lemma}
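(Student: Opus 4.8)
The plan is to control $\widehat N(\mathfrak g,p)$ through its action on the finite-dimensional space $\mathfrak g$. First I would fix formal coordinates $x_1,\dots,x_n$ centered at $p$, identifying $\widehat{\mathfrak X}(M,p)$ with $\bigoplus_i \mathbf{C}[[x_1,\dots,x_n]]\partial_i$, and write $\mathfrak m=(x_1,\dots,x_n)$. Filter $\widehat{\mathfrak X}(M,p)$ by order of vanishing at $p$: let $\widehat{\mathfrak X}_{\geq k}$ consist of the vector fields whose coefficients lie in $\mathfrak m^k$, so that $\bigcap_k \widehat{\mathfrak X}_{\geq k}=0$ and $[\widehat{\mathfrak X}_{\geq j},\widehat{\mathfrak X}_{\geq k}]\subseteq \widehat{\mathfrak X}_{\geq j+k-1}$. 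Every $Y\in\widehat N(\mathfrak g,p)$ satisfies $[Y,\mathfrak g]\subseteq\mathfrak g$, so $Y\mapsto \operatorname{ad}(Y)|_{\mathfrak g}$ is a well-defined linear map $\widehat N(\mathfrak g,p)\to\operatorname{End}(\mathfrak g)$; by the Jacobi identity its image lies in the finite-dimensional Lie algebra $\operatorname{Der}(\mathfrak g)$, and its kernel is the centralizer $\widehat Z(\mathfrak g,p)=\{Y\in\widehat{\mathfrak X}(M,p):[Y,\mathfrak g]=0\}$. Hence it suffices to prove that $\widehat Z(\mathfrak g,p)$ is finite-dimensional, after which $\dim\widehat N(\mathfrak g,p)\leq \dim\widehat Z(\mathfrak g,p)+\dim\operatorname{Der}(\mathfrak g)$.

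To bound the centralizer I would show that the evaluation $\widehat Z(\mathfrak g,p)\to T_pM$, $Y\mapsto Y(p)$, is injective, giving $\dim\widehat Z(\mathfrak g,p)\leq n$. Suppose $Y\in\widehat Z(\mathfrak g,p)$ with $Y(p)=0$ and $Y\neq 0$; set $m=\operatorname{ord}(Y)\geq 1$ and let $Y_m\neq 0$ be the component of $Y$ with coefficients homogeneous of degree $m$. For $X\in\mathfrak g$ write $X=\sum_{j\geq 0}X^{(j)}$ in homogeneous components. Expanding $0=[Y,X]=\sum_{a,b}[Y^{(a)},X^{(b)}]$ and collecting the part with coefficients homogeneous of degree $m-1$, only the term $a=m$, $b=0$ survives (since $Y^{(a)}=0$ for $a<m$), so $[Y_m,X^{(0)}]=0$ for every $X\in\mathfrak g$. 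By transitivity the map $X\mapsto X^{(0)}$ surjects onto the space of constant vector fields, hence $[Y_m,\partial_i]=0$ for all $i$, which forces the coefficients of $Y_m$ to be constant; this contradicts their homogeneity of degree $m\geq 1$, so $Y=0$. Combining the two bounds, $\dim\widehat N(\mathfrak g,p)\leq n+\dim\operatorname{Der}(\mathfrak g)<\infty$.

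The essential content — and the only step using transitivity — is the injectivity of evaluation on the centralizer, i.e., the leading-term computation of the previous paragraph; once that is in place, the reduction via $\operatorname{ad}$ to $\operatorname{Der}(\mathfrak g)$ is purely formal. The one point requiring a little care is that these manipulations take place in the pro-finite-dimensional Lie algebra $\widehat{\mathfrak X}(M,p)$: one should check that the order filtration is separated and multiplicative as stated and that passing to leading homogeneous terms is legitimate, but all of this reduces to the behaviour of finite jets and is routine.
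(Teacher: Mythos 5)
Your proof is correct, but it is organized differently from the paper's. The paper argues directly with jets: it picks $k$ so large that the only element of $\mathfrak g$ vanishing to order $k$ at $p$ is $0$, and shows that the $(k+1)$-jet map is injective on $\widehat{N}(\mathfrak g,p)$ --- if $Y$ vanishes to order $k+1$, then $[Y,X]$ is an element of $\mathfrak g$ vanishing to order $k$, hence $0$, so $Y$ centralizes $\mathfrak g$; transitivity and $Y(p)=0$ then kill $Y$. You instead split off the finite-dimensional piece first, via $Y\mapsto \operatorname{ad}(Y)|_{\mathfrak g}\in\operatorname{Der}(\mathfrak g)$, and reduce everything to the centralizer, for which you prove injectivity of evaluation at $p$ by a leading-homogeneous-term computation. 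Both arguments ultimately rest on the same kernel fact (a formal vector field commuting with a transitive $\mathfrak g$ and vanishing at $p$ is zero); your graded computation makes this step more rigorous than the paper's appeal to invariance under flows, which is slightly informal for formal vector fields. What your route buys is an explicit and typically much sharper bound, $\dim\widehat{N}(\mathfrak g,p)\le n+\dim\operatorname{Der}(\mathfrak g)$, independent of the order $k$ needed to separate elements of $\mathfrak g$ by jets; what the paper's route buys is brevity and the integer $k$ itself, which is reused immediately afterwards in the construction of the variety $V\subset M^{[n]}$ for Theorem~\ref{homogeneous}. The only point I would tighten in your write-up is the remark that the kernel of $Y\mapsto\operatorname{ad}(Y)|_{\mathfrak g}$ on $\widehat{N}(\mathfrak g,p)$ is $\widehat{N}(\mathfrak g,p)\cap\widehat{Z}(\mathfrak g,p)$, which equals $\widehat{Z}(\mathfrak g,p)$ only because centralizing trivially implies normalizing --- worth one clause.
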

\begin{proof} Let $k$ be an integer large enough so that the only element of $\mathfrak g$ vanishing at order~$k$ at~$p$ is~$0$. If $\widehat{N}(\mathfrak g,p) $ is not f\/inite-dimensional then there exists a non-zero $Y \in \widehat{N}(\mathfrak g,p)$ vanishing at order $k+1$ at $p$. For $X \in \mathfrak g$, the Lie bracket $[Y,X]$ is an element of $\mathfrak g$ vanishing at order~$k$ at~$p$. It is zero meaning that $Y$ is invariant under the f\/lows of vector f\/ields in~$\mathfrak g$. The transitivity hypothesis together with $Y(p)=0$ proves the lemma.
 \end{proof}
\begin{Lemma}If there exists a point $p\in M$ such that $\mathfrak g$ is maximal among finite-dimensional Lie subalgebra of~$\widehat{\mathfrak X}(M,p)$ then $\mathfrak g$ is normal.
\end{Lemma}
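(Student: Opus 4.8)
The plan is to prove the (a priori stronger) assertion that, after the standing reduction to a Zariski-open subvariety on which $\mathfrak g$ has maximal rank, one has $\widehat N(\mathfrak g,q)=\mathfrak g$ at \emph{every} point $q$ of $M$; as $M$ is then a connected complex manifold, this is more than enough. The idea is that the hypothesis pins down the normalizer at the single point $p$, and the local flows of the vector fields in $\mathfrak g$ allow one to drag this information over all of $M$.

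First I would handle the point $p$. Since $\mathfrak g$ is transitive, the preceding Lemma shows that $\widehat N(\mathfrak g,p)$ is a \emph{finite-dimensional} Lie subalgebra of $\widehat{\mathfrak X}(M,p)$, and it contains $\mathfrak g$ simply because $[\mathfrak g,\mathfrak g]\subseteq\mathfrak g$. Maximality of $\mathfrak g$ at $p$ then forces $\widehat N(\mathfrak g,p)=\mathfrak g$; in particular $\dim\widehat N(\mathfrak g,p)=\dim\mathfrak g$.

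Next I would propagate this. For $X\in\mathfrak g$ let $\varphi^X_t$ be the local flow of $X$. Because $X$ is invariant under its own flow and the inner derivation $[X,\cdot]$ stabilizes the finite-dimensional space $\mathfrak g$, the push-forward $(\varphi^X_t)_*$ preserves $\mathfrak g$ (it acts there as the exponential of $t\,[X,\cdot]$), so $(\varphi^X_t)_*\mathfrak g=\mathfrak g$. As $(\varphi^X_t)_*$ is a Lie algebra isomorphism of $\widehat{\mathfrak X}(M,q)$ onto $\widehat{\mathfrak X}(M,\varphi^X_t(q))$, it then carries $\widehat N(\mathfrak g,q)$ isomorphically onto $\widehat N(\mathfrak g,\varphi^X_t(q))$, since it converts the defining condition $[Y,\mathfrak g]\subseteq\mathfrak g$ into $[(\varphi^X_t)_*Y,\mathfrak g]\subseteq\mathfrak g$. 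Composing flows of a basis of $\mathfrak g$ and invoking transitivity, every point close to a given $q_0$ is of the form $\psi(q_0)$ for a finite composition $\psi$ of flows of elements of $\mathfrak g$, and $\psi_*$ maps $\widehat N(\mathfrak g,q_0)$ onto $\widehat N(\mathfrak g,\psi(q_0))$ while fixing $\mathfrak g$. Hence $q\mapsto\dim\widehat N(\mathfrak g,q)$ is locally constant, so constant on the connected $M$; combining with the base case, $\dim\widehat N(\mathfrak g,q)=\dim\mathfrak g$ for all $q$, and since $\mathfrak g\subseteq\widehat N(\mathfrak g,q)$ always holds, equality holds everywhere. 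A fortiori $\mathfrak g$ is normal.

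The routine ingredients are the standing reductions (that we may take $\mathfrak g$ transitive and $M$ a connected complex manifold) and the identity $(\varphi^X_t)_*\mathfrak g=\mathfrak g$. The one genuinely delicate point is the flow-transport step: one must verify that the local flows of elements of $\mathfrak g$ really do sweep out a full Euclidean neighbourhood of each point — this is exactly where transitivity enters, through the fact that $(t_1,\dots,t_n)\mapsto\varphi^{X_1}_{t_1}\circ\cdots\circ\varphi^{X_n}_{t_n}(q_0)$ is a local diffeomorphism at the origin when $X_1,\dots,X_n\in\mathfrak g$ have values at $q_0$ spanning $T_{q_0}M$ — and that conjugation by such a flow genuinely intertwines the normalizers at the two points. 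Everything else is formal.
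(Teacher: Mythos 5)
Your argument is correct and is essentially the paper's own proof: the authors likewise combine the preceding finite-dimensionality lemma with maximality to get $\widehat N(\mathfrak g,p)=\mathfrak g$ at the distinguished point, and then transport the equality to every point by compositions of flows of elements of $\mathfrak g$, which preserve $\mathfrak g$. Your write-up merely spells out in more detail the two steps the paper leaves implicit (that $\mathfrak g\subseteq\widehat N(\mathfrak g,p)$, and that the flow push-forwards intertwine the normalizers and sweep out neighbourhoods by transitivity).
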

\begin{proof}
Because of the preceding lemma, if such a point exists then $\mathfrak g = \widehat{N}(\mathfrak g,p)$ in $\widehat{\mathfrak X}(M,p)$. By transitivity, for any couple of points $(p_1,p_2) \in M^2$ there is a composition of f\/lows of elements of~$\mathfrak g$ sending~$p_1$ on~$p_2$. These f\/lows preserve $\mathfrak g$ thus the equality holds at any~$p$.
\end{proof}

\begin{Example} Let $M$ be $n$-dimensional and $\mathfrak g$ be a transitive Lie subalgebra of rational vector f\/ields isomorphic to $\mathfrak{sl}_{n+1}(\mathbf C)$. Then $\mathfrak g$ is normal (see~\cite{cartan}).
\end{Example}

\subsection[Centerless, transitive and normal $\Rightarrow$ isogenous to a homogeneous space]{Centerless, transitive and normal $\boldsymbol{\Rightarrow}$ isogenous to a homogeneous space}

\begin{Theorem}\label{homogeneous} Let $M$ be a smooth irreducible algebraic variety over $\mathbf C$ and $\mathfrak g$ be a transitive, centerless, normal, finite-dimensional Lie subalgebra of $\mathfrak X(M)$. Then there exists an algebraic group $G$, an algebraic subgroup $H \subset G$ and an isogeny between $(M,\mathfrak g)$ and $(G/H, \mathfrak{lie}(G))$. Moreover, if $N_G(\mathfrak{lie}(H)) = H$ then the isogeny is a dominant rational map~$M \dasharrow G/H$.
\end{Theorem}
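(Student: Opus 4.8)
The plan is to transpose the argument of the $\mathfrak{sl}_2$ case (Section~\ref{sl2}) to the jet space $M^{[n]}$ of Section~\ref{some_jet_spaces}, where $n=\dim M$, producing a homogeneous space $G/H$ in the role of $\mathbf{CP}_1$. Write $r=\dim\mathfrak g$. After replacing $M$ by a suitable Zariski open subset I fix a point $p$ at which $\mathfrak g$ has maximal rank and $\widehat N(\mathfrak g,p)=\mathfrak g$, together with a formal local isomorphism $r_o\colon(\mathbf C^n,0)\to(M,p)$; put $\mathfrak g_o=r_o^*\mathfrak g\subset\widehat{\mathfrak X}$, a transitive finite-dimensional Lie subalgebra isomorphic to $\mathfrak g$ which, by normality, satisfies $\widehat N(\mathfrak g_o,0)=\mathfrak g_o$. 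Consider the set of formal submersions
\begin{gather*}
V=\{\, r\in M^{[n]\ast}\ :\ r^*\mathfrak g=\mathfrak g_o\,\},
\end{gather*}
the equality being one of subspaces of $\widehat{\mathfrak X}$.

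The first step is to establish that $V$ is an algebraic variety of dimension $r$ carrying a pair of commuting rational parallelisms of type $\mathfrak g$. The prolongations $X^{[n]}$, $X\in\mathfrak g$, are tangent to $V$, because the flow of $X$ on $M$ preserves $\mathfrak g$ as a Lie algebra, hence preserves the condition $r^*\mathfrak g=\mathfrak g_o$; they are pointwise linearly independent along $V$ (the prolongation of a rational vector field vanishes at a submersion $r$ only if the field vanishes near $r(0)$), so $\dim V\ge r$. Conversely, given $v\in T_rV$, write $\xi=\sigma(v)$ with $\sigma$ the canonical form of Theorem~\ref{canonique}; differentiating the relations $r_s^*X\in\mathfrak g_o$ along a curve $r_s\subset V$ with $\dot r_0=v$ gives $[\xi,\mathfrak g_o]\subseteq\mathfrak g_o$, i.e.\ $\xi\in\widehat N(\mathfrak g_o,0)=\mathfrak g_o$ --- here normality is used in an essential way. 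Hence $\sigma(T_rV)=\mathfrak g_o$ and $\dim V=r$, while the fact that a transitive finite-dimensional Lie algebra of formal vector fields is determined by a jet of bounded order shows that $V$ is cut out by finitely many algebraic equations. Now $X\mapsto X^{[n]}|_V$ is a rational parallelism of $V$ of type $\mathfrak g$; the restriction $\sigma|_V$ takes values in $\mathfrak g_o$ by the computation above, and ${\rm d}\sigma=-\frac{1}{2}[\sigma,\sigma]$ makes it a coparallelism of type $\mathfrak g_o\cong\mathfrak g$; finally the invariance of $\sigma$ under prolonged automorphisms of $M$ gives ${\rm Lie}_{X^{[n]}}(\sigma|_V)=0$, so $\sigma|_V$ defines precisely the parallelism reciprocal to $X\mapsto X^{[n]}|_V$, and the two commute.

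The second step is to apply Corollary~\ref{th:pair}: since $\mathfrak g$ is centerless, $V$ with this pair of commuting parallelisms is isogenous to an algebraic group $G$ with $\mathfrak{lie}(G)\cong\mathfrak g$ endowed with its left and right invariant parallelisms (so in particular $\mathfrak g$ is an algebraic Lie algebra, and $X\mapsto X^{[n]}|_V$ corresponds to, say, right invariant vector fields). To descend to $M$, note that the basepoint map $V\to M$, $r\mapsto r(0)$, is dominant (transport $r_o$ by the flows of $\mathfrak g$, using transitivity) and that its fibres are exactly the orbits of the reparameterisation stabiliser $\widetilde K=\{\gamma\in\Gamma:\gamma^*\mathfrak g_o=\mathfrak g_o\}$ acting by $S\gamma$; by normality its Lie algebra is the isotropy subalgebra $\mathfrak g_o\cap\widehat{\mathfrak X}^0$, so $\widetilde K$ is finite-dimensional. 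Since reparameterisation commutes with prolongation, $\widetilde K$ preserves the parallelism $X\mapsto X^{[n]}|_V$, so under the isogeny with $G$ it is carried to the action of an algebraic subgroup $H\subset G$ by translations (translations being exactly the automorphisms preserving an invariant parallelism of $G$). Pushing the isogeny $V\dasharrow G$ to the quotients then yields an isogeny between $M\cong V/\widetilde K$ and $G/H$, realising $(M,\mathfrak g)$ as isogenous to $(G/H,\mathfrak{lie}(G))$. For the last assertion, if $N_G(\mathfrak{lie}(H))=H$ then $(G/H,\mathfrak{lie}(G))$ admits no nontrivial automorphism preserving the induced action of $\mathfrak{lie}(G)$, so --- exactly as at the end of the proof of the Loray--Pereira--Touzet theorem --- the isogeny collapses to a genuine dominant rational map $M\dasharrow G/H$.

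The main obstacle is this last descent. One must show that the reparameterisation stabiliser $\widetilde K$, a priori only a pro-algebraic subgroup cut out inside $\Gamma$, is honestly an algebraic group, and that the canonical conjugating map $V\dasharrow G$ --- the one of the Remark following Corollary~\ref{th:pair}, given by $r\mapsto-\omega(r)\circ\omega'(r)^{-1}$ --- is right-$\widetilde K$-equivariant for a homomorphism $\widetilde K\to H\subset G$, so that it descends to the quotients. This is the analogue of the Claim in the $\mathfrak{sl}_2$ case that the $\widetilde K$-action on $V$ and the right action of the triangular group on ${\rm PSL}_2(\mathbf C)$ are conjugate; it rests on the finite-dimensionality of $\widetilde K$ (hence on normality) and on the semilinear transformation law $(S\gamma)^*\sigma=\gamma^*\circ\sigma$ of the canonical form.
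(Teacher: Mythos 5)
Your proposal follows essentially the same route as the paper: the variety $V=\{r: r^*\mathfrak g=r_o^*\mathfrak g\}$ in $M^{[n]}$, the pair of commuting parallelisms given by the prolongations $X^{[n]}$ and the restriction of the canonical form $\sigma$ (with normality used exactly where the paper uses it, to identify $\sigma(T_rV)$ with $\mathfrak g_o$ rather than the a priori larger normalizer), then Corollary~\ref{th:pair} to get $G$, the reparameterisation stabiliser to get $H$, and the deck-transformation argument under $N_G(\mathfrak{lie}(H))=H$ for the final assertion. The points you flag as the ``main obstacle'' (algebraicity of $\widetilde K$ and equivariance of the conjugating map) are precisely the ones the paper also leaves essentially asserted, so the two arguments match in both structure and level of detail.
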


Because of the f\/initeness and the transitivity, there exists an integer~$k$ such that at any $p \in M$ and for any $Y \in \widehat{N}(\mathfrak g,p)$, $j_k(Y)(p) \not = 0$, unless $Y=0$.

Let $r_o\colon (\mathbf C^n,0) \to M$ be an invertible formal map with $r_o(0) =p$ a regular point. Let us consider the subspace of $M^{[n]}$ def\/ined by
\begin{gather*}V = \{ r \colon (\mathbf C^n,0) \to M \, |\, r^\ast \mathfrak g = r_o^\ast \mathfrak g\}.\end{gather*}

\begin{Lemma} $V$ is finite-dimensional. \end{Lemma}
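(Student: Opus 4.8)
The plan is to show that $V$ sits inside a jet space cut out by the vanishing of finitely many coordinate functions, so that it becomes an honest finite-type scheme rather than a pro-variety. The key point is the integer $k$ supplied just before the statement: there is a $k$ such that at any $p\in M$ the only element $Y$ of $\widehat N(\mathfrak g,p)$ with $j_k(Y)(p)=0$ is $Y=0$. Equivalently, once we know the $k$-jet of $r^\ast\mathfrak g$ at $0$ (a finite-dimensional piece of data, living in the $k$-th truncation $M^{[n]}_k$ of the jet space), the whole formal germ $r^\ast\mathfrak g$ is determined, because $\mathfrak g$, being transitive, is reconstructed from its $k$-jet at $p=r(0)$, and the condition $r^\ast\mathfrak g = r_o^\ast\mathfrak g$ is then forced at all orders by the prolongation formulas for $S\gamma$-action.

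First I would make precise what the equation ``$r^\ast\mathfrak g = r_o^\ast\mathfrak g$'' means on $M^{[n]}$. For a submersion $r$ (we restrict to $M^{[n]\ast}$, replacing $M$ by the Zariski open locus where $\mathfrak g$ has maximal rank), the canonical form $\sigma$ of Theorem~\ref{canonique} identifies $r^\ast\mathfrak g$ with the finite-dimensional subspace $\sigma(T_rV)\subset\widehat{\mathfrak X}$; concretely, pulling back a fixed basis $X_1,\dots,X_d$ of $\mathfrak g$ by $r$ gives $d$ formal vector fields on $(\mathbf C^n,0)$ whose Taylor coefficients are regular functions on $M^{[n]}$, and $V$ is the locus where the $\mathbf C$-span of these coincides with the span of the $r_o^\ast X_i$. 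This is a (possibly infinite) system of polynomial equations in the jet coordinates. The finiteness claim is that this system is equivalent to its truncation at order $k$: if $r$ agrees with (a reparametrization of) $r_o$ to order $k$ in the appropriate sense, then $r^\ast\mathfrak g = r_o^\ast\mathfrak g$ exactly. Indeed, given the $k$-jet of $r$, the $k$-jet at $p=r(0)$ of each $r^\ast X_i$ is determined; since $\mathfrak g$ is transitive, its elements are determined by their $k$-jets at $p$ (this is exactly the content of the choice of $k$: no nonzero element of $\widehat N(\mathfrak g,p)\supseteq\mathfrak g$ vanishes to order $k$), so the full germ $r^\ast\mathfrak g$ is pinned down, and comparing with the analogous statement for $r_o$ closes the argument.

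The main obstacle, and the step I would treat most carefully, is precisely this ``determined by the $k$-jet'' reduction: one must check that the prolongation of a formal vector field on $M$ to $M^{[n]}$, restricted to $V$, is governed order-by-order by lower-order data, so that the tail of the defining system of $V$ follows from its head. This is where the compatibility in Theorem~\ref{canonique} between the differential structure $\partial_j$ and the $\Gamma$-action is used: $d\sigma = -\tfrac12[\sigma,\sigma]$ together with $(S\gamma)^\ast\sigma = \gamma^\ast\circ\sigma$ says that moving in $V$ (which is an orbit-type locus) is controlled by the bracket in $\widehat{\mathfrak X}$, hence by finitely many parameters once the model $\mathfrak g$ and its jet at $p$ are fixed. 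Concretely one argues inductively on the jet order $m\ge k$: assuming the equations of $V$ through order $m$ hold, the total-derivative relations (the $\partial_j$-action being $\sum_{i,\alpha} z_i^{\alpha+1_j}\partial/\partial z_i^\alpha$) express the order-$(m+1)$ equations as $\partial_j$-derivatives of order-$m$ equations plus terms already known to vanish.

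Having established that $V$ is cut out in the finite-dimensional truncation $M^{[n]}_k$ by finitely many polynomial equations, I would conclude that $V$ is a finite-dimensional (quasi-)affine algebraic variety, which is exactly the statement of the Lemma. As a sanity check, this specializes to the $\mathfrak{sl}_2$ case of the previous subsection, where $k=2$ and $V$ turned out to be a $3$-dimensional subvariety of $\mathscr L$ cut out by the prolongations $E_{-1}^n\cdot f$ of a single Schwarzian-type equation; the general argument here is the coordinate-free version of that computation.
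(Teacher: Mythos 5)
You have the right key ingredient (the integer $k$ for which no nonzero element of $\widehat N(\mathfrak g,p)$ has vanishing $k$-jet), but your central reduction points in the wrong logical direction, and in that direction the claim is false. You assert that the defining system of $V$ ``is equivalent to its truncation at order $k$'' --- that agreement with (a reparametrization of) $r_o$ to order $k$ already forces $r^\ast\mathfrak g = r_o^\ast\mathfrak g$ --- and you conclude that $V$ ``is cut out in the finite-dimensional truncation''. If that were so, $V$ would be the full preimage, under the truncation of the pro-variety $M^{[n]}$, of a subvariety of a finite jet space, and the fibers of that truncation are infinite-dimensional; so $V$ would be infinite-dimensional, the opposite of what you want. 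Your own sanity check refutes the claim: in the $\mathfrak{sl}_2$ case $V$ is cut out by \emph{all} the prolongations $E_{-1}^m\cdot f=0$, each of which constrains the new coordinate $z^{(m+3)}$, and the order-three locus $\{f=0\}$ strictly contains $V$ and is infinite-dimensional. Your total-derivative induction shows at best that the defining ideal is differentially generated in bounded order --- true, but it does not bound $\dim V$ --- and the determination statement you do make concerns the subspace $r^\ast\mathfrak g\subset\widehat{\mathfrak X}$ rather than the map $r$ itself.

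What is actually needed, and what the paper proves, is the reverse implication: the truncation map restricted to $V$ is \emph{injective}, so that on $V$ the higher-order coordinates are determined by the $k$-jet and $V$ embeds into a finite-dimensional jet space (rather than the higher-order equations being redundant). Concretely, if $r$ and $r_o$ in $V$ have the same $k$-jet, then $\varphi = r_o^{-1}\circ r$ is a formal diffeomorphism of $(\mathbf C^n,0)$ tangent to the identity to order $k$ which preserves $r_o^\ast\mathfrak g$; for $X$ in this algebra, $\varphi^\ast X - X$ again lies in $r_o^\ast\mathfrak g\subseteq r_o^\ast\widehat N(\mathfrak g,p)$ and vanishes to order $k$ at $0$, hence is zero by the choice of $k$; a formal map fixing the base point and centralizing a transitive Lie algebra is the identity, so $r=r_o$. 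This injectivity is the missing step, and nothing in your inductive argument supplies it.
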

\begin{proof} If $r_o^{-1}\circ r$ is tangent to the identity at order $k$ then the induced automorphism of $\mathfrak g$ is the identity. The map $r_o^{-1}\circ r$ f\/ixes $p$, thus it is the identity. This proves the lemma.
\end{proof}

Using $r_o$ one can identify the Lie algebra $\widehat{N}(\mathfrak g,p)$ with a Lie subalgebra of $\widehat{\mathfrak{X}}$. The latter acts on $M^{[n]}$ as described in Section~\ref{canonicalst}. As an application of the Theorem~\ref{canonique}, one gets:

\begin{Lemma} The restriction of the canonical structure of $M^{[n]}$ gives an parallelism
\begin{gather*} TV = r_o^\ast(\widehat{N}(\mathfrak g,p)) \times V,\end{gather*} called the canonical parallelism.
\end{Lemma}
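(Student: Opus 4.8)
The plan is to use the canonical form $\sigma$ of Theorem~\ref{canonique} to trivialize $TV$ by the fixed finite-dimensional Lie algebra $\mathfrak n := r_o^\ast\big(\widehat N(\mathfrak g,p)\big)$, and then to read off the parallelism structure from the Maurer--Cartan identity $d\sigma = -\tfrac12[\sigma,\sigma]$.

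First I would set $\mathfrak h_0 := r_o^\ast\mathfrak g \subset \widehat{\mathfrak X}$. Because $r_o$ is an invertible formal map, $r_o^\ast$ is an isomorphism of Lie algebras from the formal vector fields on $M$ at $p$ onto $\widehat{\mathfrak X}$; it carries $\mathfrak g$ onto $\mathfrak h_0$ and hence the normalizer $\widehat N(\mathfrak g,p)$ onto $\mathfrak n = \{\xi \in \widehat{\mathfrak X}\,:\,[\xi,\mathfrak h_0]\subset\mathfrak h_0\}$, the normalizer of $\mathfrak h_0$ inside $\widehat{\mathfrak X}$. Thus $\mathfrak n$ is a Lie subalgebra containing $\mathfrak h_0$, and it is finite dimensional by the lemma on $\widehat N(\mathfrak g,p)$ --- this is the one place transitivity of $\mathfrak g$ is used. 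Since, by the definition of $V$, every $r \in V$ satisfies $r^\ast\mathfrak g = r_o^\ast\mathfrak g = \mathfrak h_0$, the map $r^\ast$ likewise carries $\widehat N(\mathfrak g, r(0))$ isomorphically onto the same $\mathfrak n$ (note $V$ lies in the submersion locus $M^{[n]\ast}$, since these pullbacks only make sense where $r$ is invertible).

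Next I would recall in sharp form the content of \cite{guillemin-sternberg} underlying Theorem~\ref{canonique}: $\sigma$ is the trivialization of $T M^{[n]\ast}$ attached to the infinitesimal $\widehat{\mathfrak X}$-action, so each $\xi\in\widehat{\mathfrak X}$ has a fundamental vector field $\xi^\sharp$ on $M^{[n]\ast}$ with $\sigma(\xi^\sharp) = \xi$, and the tautological $\widehat{\mathfrak X}$-valued function $r \mapsto r^\ast Y$ (for $Y$ a vector field on $M$) satisfies $\xi^\sharp\cdot(r^\ast Y) = [\xi, r^\ast Y]$; equivalently, for $Y' \in \widehat N(\mathfrak g, q)$ the prolongation $Y'^{[n]}$ is a vector field on $M^{[n]\ast}$ over a neighbourhood of the fibre above $q$, it has $\sigma(Y'^{[n]}_r) = r^\ast Y'$, and its flow $r\mapsto \phi^{Y'}_t\circ r$ fixes the subspace $r^\ast\mathfrak g$ because $[Y',\mathfrak g]\subset\mathfrak g$ forces $(\phi^{Y'}_t)^\ast\mathfrak g = \mathfrak g$. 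Granting this, I would fix $r \in V$ and compute $\sigma_r(T_rV)$. Choosing a basis $Y_1,\dots,Y_d$ of $\mathfrak g$ and a vector-space complement $\widehat{\mathfrak X} = \mathfrak h_0 \oplus \mathfrak c$ with projection $\pi$, the locus $V$ is cut out near $r$ by the vanishing of the functions $r\mapsto \pi(r^\ast Y_i)$; hence for $Z \in T_rV$, writing $\xi = \sigma_r(Z) = \sigma_r(\xi^\sharp(r))$, tangency gives $0 = \pi\big(\xi^\sharp\cdot(r^\ast Y_i)\big) = \pi([\xi, r^\ast Y_i])$ for all $i$, i.e. $[\xi,\mathfrak h_0]\subset\mathfrak h_0$, i.e. $\sigma_r(Z)\in\mathfrak n$. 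Conversely, given $\xi\in\mathfrak n$, the pushed-forward field $Y' := r_\ast\xi$ satisfies $[Y',\mathfrak g] = r_\ast[\xi,\mathfrak h_0]\subset r_\ast\mathfrak h_0 = \mathfrak g$, so $Y'\in\widehat N(\mathfrak g, r(0))$, its prolongation $Y'^{[n]}$ is tangent to $V$, and $\sigma_r(Y'^{[n]}_r) = \xi$; hence $\sigma_r(T_rV) = \mathfrak n$. Since $\sigma_r$ is injective this gives $\dim T_rV = \dim\mathfrak n =: d$ at every $r\in V$, and as $V$ carries the $d$ everywhere linearly independent tangent fields $\xi_1^\sharp|_V, \dots, \xi_d^\sharp|_V$ (for a basis $\xi_1,\dots,\xi_d$ of $\mathfrak n$), $V$ is smooth of dimension $d$ and these fields form a global frame; equivalently $\sigma$ restricts to a vector bundle isomorphism $TV \to \mathfrak n \times V = r_o^\ast\big(\widehat N(\mathfrak g,p)\big)\times V$.

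To conclude I would evaluate $d\sigma = -\tfrac12[\sigma,\sigma]$ on two of the frame fields: as $\sigma(\xi_i^\sharp)$ is the constant $\xi_i$, this collapses to $\sigma\big([\xi_i^\sharp,\xi_j^\sharp]\big) = [\xi_i,\xi_j]$ on $V$, so the frame $\{\xi_i^\sharp|_V\}$ realizes $\mathfrak n$ as a Lie algebra of pointwise independent vector fields on $V$ --- that is, $\sigma|_V$ is a coparallelism of type $r_o^\ast(\widehat N(\mathfrak g,p))$, the asserted canonical parallelism. I expect the main obstacle to be not the linear algebra in $\widehat{\mathfrak X}$ nor the dimension count, but making the facts recalled in the third paragraph fully rigorous --- that $\sigma$ is the Maurer--Cartan form of the $\widehat{\mathfrak X}$-action and that prolongations of normalizing vector fields are tangent to $V$ --- since this requires careful bookkeeping with the pro-algebraic/formal geometry of $M^{[n]}$ (the relevant ``flows'' being incompatible with the projective systems, as noted after Theorem~\ref{canonique}, so that one must argue infinitesimally or restrict to the finite-dimensional $V$).
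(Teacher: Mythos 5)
Your argument is correct and follows exactly the route the paper intends: the paper states this lemma with no written proof beyond the phrase ``as an application of Theorem~\ref{canonique}'', and your computation of $\sigma_r(T_rV)=r_o^*\big(\widehat N(\mathfrak g,p)\big)$ via the tangency condition $[\sigma_r(Z),r_o^*\mathfrak g]\subset r_o^*\mathfrak g$, together with the Maurer--Cartan identity $d\sigma=-\tfrac12[\sigma,\sigma]$ to recover the bracket relations of the frame, is precisely that application carried out in detail. The extra steps you supply --- the reverse inclusion via prolongations of normalizing vector fields and the resulting smoothness of $V$ --- are sound and fill genuine gaps the paper leaves implicit.
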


\begin{Lemma}The horizontal sections of the reciprocal Lie connection of the canonical parallelism are~$Y^{[n]}$ for $Y \in \widehat{N}(\mathfrak g,q)$ for $q\in M$.
\end{Lemma}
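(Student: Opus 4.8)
The plan is to combine Lemma~\ref{Lemma1} with the naturality of the canonical form $\sigma$ of Theorem~\ref{canonique} and a dimension count. Recall that the canonical parallelism on $V$ is the rational coparallelism $\sigma|_V$, whose associated parallelism consists of the vector fields $\rho(\xi)|_V$ for $\xi\in r_o^\ast(\widehat{N}(\mathfrak g,p))$. By Lemma~\ref{Lemma1} a vector field on $V$ is $\nabla^{\rm rec}$-horizontal if and only if it commutes with all of these, equivalently if and only if it is an infinitesimal symmetry of $\sigma|_V$. So the statement amounts to identifying those symmetries with the prolongations $Y^{[n]}$ of elements $Y\in\widehat{N}(\mathfrak g,q)$, $q\in M$.

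First I would verify the inclusion $\supseteq$. Fix $q\in M$ and $Y\in\widehat{N}(\mathfrak g,q)$, regarded as a formal vector field at $q$ with local flow $\phi_t^Y$. Writing $A_1,\dots,A_r$ for a basis of $\mathfrak g$, the relation $[Y,A_j]=\sum_k c_{kj}A_k$ with $c_{kj}\in\mathbf C$ shows that $t\mapsto\big((\phi_t^Y)^\ast A_1,\dots,(\phi_t^Y)^\ast A_r\big)$ solves a closed linear system with constant coefficients and initial value $(A_1,\dots,A_r)$; hence $(\phi_t^Y)^\ast A_j=\sum_i (e^{tC})_{ij}A_i$ and $(\phi_t^Y)^\ast\mathfrak g=\mathfrak g$ as germs at $q$. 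Therefore, for an arc $r\in V$ with $r(0)$ near $q$, the prolonged flow $r\mapsto\phi_t^Y\circ r$ satisfies $(\phi_t^Y\circ r)^\ast\mathfrak g=r^\ast\big((\phi_t^Y)^\ast\mathfrak g\big)=r^\ast\mathfrak g=r_o^\ast\mathfrak g$, so it preserves $V$ and $Y^{[n]}$ is tangent to $V$ near the arcs lying over $q$. Moreover $Y^{[n]}$ preserves $\sigma$: on the coordinate ring $\mathbf C[M^{[n]}]$ the prolongation $Y^{[n]}$ acts only on the $\mathbf C[M]$-tensor factor whereas the $\widehat{\mathfrak X}$-action of Section~\ref{canonicalst} acts only on the $\mathbf C[\partial_1,\dots,\partial_n]$-factor, so $[Y^{[n]},\rho(\xi)]=0$ for every $\xi\in\widehat{\mathfrak X}$, that is, ${\rm Lie}_{Y^{[n]}}\sigma=0$. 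By Lemma~\ref{Lemma1}, $Y^{[n]}|_V$ is a $\nabla^{\rm rec}$-horizontal section.

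For the reverse inclusion I would use ranks. Fix a regular point $\hat r\in V$ of $\sigma|_V$ with $\hat r(0)=q$. By Lemma~\ref{Lemma3} the germs at $\hat r$ of $\nabla^{\rm rec}$-horizontal vector fields form a Lie algebra of dimension $\dim V$, and by the preceding lemma $\dim V=\dim\widehat{N}(\mathfrak g,p)$. Transitivity of $\mathfrak g$ gives, for any two points, a composition of flows of elements of $\mathfrak g$ carrying one to the other; since such flows preserve $\mathfrak g$, the resulting local diffeomorphism conjugates $\widehat{N}(\mathfrak g,p)$ onto $\widehat{N}(\mathfrak g,q)$, hence $\dim\widehat{N}(\mathfrak g,q)=\dim V$. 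On the other hand $Y\mapsto Y^{[n]}|_V$ is injective on $\widehat{N}(\mathfrak g,q)$, because the dominant map $V\to M$, $r\mapsto r(0)$, carries $Y^{[n]}$ to $Y$, so a nonzero $Y$ has nonzero germ at $\hat r$; and by the first part its image lies inside the space of $\nabla^{\rm rec}$-horizontal germs at $\hat r$. Two subspaces of equal dimension, one contained in the other, coincide; this is the assertion.

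I expect the main obstacle to be bookkeeping rather than any single hard idea: being precise about the identification $T_rV\cong r_o^\ast(\widehat{N}(\mathfrak g,p))$ that defines the canonical parallelism, checking that $Y^{[n]}$ remains tangent to $V$ when $Y$ merely normalizes $\mathfrak g$ rather than lying in $\mathfrak g$, and establishing $\dim V=\dim\widehat{N}(\mathfrak g,q)$ for the generic $q$, which is where transitivity enters. With those in hand the proof reduces to Lemma~\ref{Lemma1}, Lemma~\ref{Lemma3} and a rank comparison.
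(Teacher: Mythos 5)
Your argument is correct. Note that the paper states this lemma without proof, so there is nothing to compare against directly; what you have written is essentially the general-$n$ version of the computation the paper does carry out explicitly in the $\mathfrak{sl}_2$ case (proof of Theorem~\ref{thm_SL2}), where prolongations of symmetries are shown to be tangent to $V$ and to commute with the parallelism via the invariance of $\sigma$, and the converse is obtained by reducing the horizontal-section equation to a linear system of the right rank. Your two ingredients --- the coordinate-ring computation showing $[Y^{[n]},\rho(\xi)]=0$ together with the flow argument for tangency to $V$, and the dimension count via Lemma~\ref{Lemma3} and the conjugation $\widehat{N}(\mathfrak g,p)\simeq\widehat{N}(\mathfrak g,q)$ by flows of $\mathfrak g$ --- supply exactly the missing details, at the same level of rigor as the surrounding text.
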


\begin{Lemma}Under the hypothesis of normality of $\mathfrak g$, $V$ has two commuting parallelisms of type~$\mathfrak g$.
\end{Lemma}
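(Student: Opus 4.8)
The approach is to exhibit the two parallelisms explicitly, the canonical parallelism of the previous lemma and the reciprocal one built from the prolonged fields $Y^{[n]}$, and then to check that each has type $\mathfrak g$ and that they commute; all of this is a formal consequence of the three preceding lemmas once the role of normality is isolated.

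For the canonical parallelism: by the previous lemma the canonical structure $\sigma$ of $M^{[n]}$ restricts to a trivialisation $TV = r_o^\ast(\widehat N(\mathfrak g,p))\times V$. Since $r_o$ is an invertible formal map, conjugation by $r_o$ identifies $\widehat{\mathfrak X}(M,p)$ with $\widehat{\mathfrak X}$ as Lie algebras, so $r_o^\ast(\widehat N(\mathfrak g,p))$ is a finite-dimensional Lie subalgebra of $\widehat{\mathfrak X}$. By normality (shrinking $M$ if necessary so that $p$ is generic) $\widehat N(\mathfrak g,p)=\mathfrak g$, so this subalgebra is isomorphic to $\mathfrak g$; together with the structure identity $d\sigma=-\frac{1}{2}[\sigma,\sigma]$ of Theorem~\ref{canonique}, which restricts to the Maurer--Cartan equation on $V$, this shows that $\sigma|_V$ is a coparallelism form of type $\mathfrak g$. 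In particular $\dim V=\operatorname{rank}(TV)=\dim\mathfrak g$; set $r=\dim\mathfrak g$.

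For the reciprocal parallelism: let $\nabla$ be the connection associated to the canonical parallelism and $\nabla^{\rm rec}$ its reciprocal. By Proposition~\ref{prop_Lie_char} together with Lemma~\ref{Lemma2}, $\nabla^{\rm rec}$ is a flat Lie connection on the rank-$r$ bundle $TV$ over the $r$-dimensional variety $V$, hence its local horizontal sections form an $r$-dimensional space with pointwise independent values generically on $V$, and by the lemma following Proposition~\ref{prop_Lie_char} it is again a Lie connection of type $\mathfrak g$. By the lemma identifying these horizontal sections, they are exactly the restrictions to $V$ of the prolongations $Y^{[n]}$ with $Y\in\widehat N(\mathfrak g,q)$ at a generic $q\in M$, i.e.\ $Y\in\mathfrak g$ by normality. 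Since prolongation $Y\mapsto Y^{[n]}$ is an injective morphism of Lie algebras, the assignment $\mathfrak g\to\mathfrak X(V)$, $Y\mapsto Y^{[n]}|_V$, is a rational parallelism of $V$ of type $\mathfrak g$.

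Commutation is then immediate from Lemma~\ref{Lemma1}: a vector field on $V$ is $\nabla^{\rm rec}$-horizontal if and only if it commutes with every vector field of the canonical parallelism, and the second parallelism is by construction made of $\nabla^{\rm rec}$-horizontal vector fields --- the situation is formally that of the left and right invariant vector fields on an algebraic group. The point I expect to require the most care is the passage in the previous paragraph from ``$\{Y^{[n]}|_V:Y\in\mathfrak g\}$ is a Lie algebra of vector fields tangent to $V$'' to ``it is a pointwise independent frame of $TV$'': this rests on the equality $\dim V=\dim\mathfrak g$ and on the flatness of $\nabla^{\rm rec}$, which force these $r$ fields to exhaust the full $r$-dimensional space of horizontal sections. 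Both inputs are supplied by the preceding lemmas, so no new idea is needed beyond this bookkeeping.
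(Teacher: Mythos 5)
Your proof is correct and follows exactly the route the paper intends: the paper states this lemma without proof, but the three preceding lemmas (the canonical parallelism $TV = r_o^\ast(\widehat N(\mathfrak g,p))\times V$, the identification of the $\nabla^{\rm rec}$-horizontal sections with the prolongations $Y^{[n]}$, and normality giving $\widehat N(\mathfrak g,p)=\mathfrak g$) are precisely the ingredients you assemble, with commutation supplied by Lemma~\ref{Lemma1}. Your closing remark on why the $Y^{[n]}|_V$ form a pointwise independent frame is a useful piece of bookkeeping the paper leaves implicit.
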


Using Corollary \ref{th:pair}, $\mathfrak g$ is the Lie algebra of an algebraic group $G$ isogeneous to $V$. $V$ is foliated by the orbits of the subgroup $K$ of $\Gamma$ stabilizing $V$. This group is algebraic with Lie algebra $\mathfrak k = r_o^\ast (\mathfrak g) \cap \widehat{\mathfrak{X}}^0$. Let $\mathfrak h \subset \mathfrak{lie}(G)$ be the Lie algebra corresponding to $\mathfrak k$ by the isogeny. Then the orbits of $\mathfrak h$ are algebraic. This means that~$\mathfrak h$ is the Lie algebra of an algebraic subgroup~$H$ of~$G$, and that~$V/K$ and~$G/H$ are isogenous.

Assume that $N_{G}(\mathfrak{lie}(H)) = H$. If $W$ is the isogeny between $V$ and $G$. The push-forward of a~local analytic deck transformation of $W \to V$ is a transformation of~$G$ preserving each element of~$\mathfrak g$, it is a right translation. A deck transformation preserves the orbits of the pull-back of~$\mathfrak k$ on~$W$. Its push-forward preserves the orbits of a group containing~$H$ with the same Lie algebra. By hypothesis the push-forward is in~$H$ and then the isogony obtained by taking the quotient under~$K$ and~$H$ is the graph of a dominant rational map.

\appendix

\section{Picard--Vessiot theory of a principal connection}\label{ApA}

In the previous reasoning we have used the concept of dif\/ferential Galois group of a connection. Here, we present a dictionary between invariant connection and strongly normal dif\/ferential f\/ield extension (in the sense of Kolchin). In our setting a dif\/ferential f\/ield is a pair $(\mathcal K, \mathcal D)$ where~$\mathcal K$ is a~f\/initely generated f\/ield over $\mathbf C$ and $\mathcal D$ is a $\mathcal K$ vector space of derivations of $\mathcal K$ stable by Lie bracket. The dimension of $\mathcal D$ is called the rank of the dif\/ferential f\/ield. Note that we can adapt this notion easily to that of a f\/inite number of commuting derivations by taking a~suitable basis of~$\mathcal D$. However we prefer to consider the whole space of derivations. With our def\/inition a~dif\/ferential f\/ield extension $(\mathcal K, \mathcal D) \to (\mathcal K', \mathcal D')$ is a f\/ield extension $\mathcal K \subset \mathcal K'$ such that each element of $\mathcal D$ extends to a~unique element of $\mathcal D'$ and such extensions span the space $\mathcal D'$ as $\mathcal K'$-vector space.

\subsection{Dif\/ferential f\/ield extensions and foliated varieties}

First, let us see that there is a natural dictionary between f\/initely generated dif\/ferential f\/ields over $\mathbf C$ and irreducible foliated varieties over $\mathbf C$ modulo birational equivalence. Let $(M,\mathcal F)$ be an irreducible foliated variety of dimension~$n$. The distribution $T\mathcal F \subset TM$ is of rank $r\leq n$. We denote by $\mathfrak X_{\mathcal F}$ the space of rational vector f\/ields in $T\mathcal F$; it is a $\mathbf C(M)$-Lie algebra of dimension~$r$. Hence, the pair $(\mathbf C(M),\mathfrak X_{\mathcal F})$ is a dif\/ferential f\/ield. The f\/ield of constants is the f\/ield $\mathbf C(M)^{\mathcal F}$ of rational f\/irst integrals of the foliation.

Let $(M,\mathcal F)$ and $(M',\mathcal F')$ be foliated varieties. A regular (rational) map $\phi\colon (M',\mathcal F')\dasharrow (M,\mathcal F)$ is a regular (rational) morphism of foliated varieties if ${\rm d}\phi$ induces an isomorphism between $T_x\mathcal F'$ and $T_{\phi(x)}\mathcal F$ for (generic values of) $x\in M'$. It is clear that $\mathcal F'$ and $\mathcal F$ have the same rank.

A dif\/ferential f\/ield extension, correspond here to a dominant rational map of irreducible foliated varieties $\phi\colon (M',\mathcal F')\dasharrow (M,\mathcal F)$. It induces the extension $\phi^*\colon (\mathbf C(M),\mathfrak X_{\mathcal F})\to (\mathbf C(M'),\mathfrak X_{\mathcal F'})$ by composition with~$\phi$.

\begin{Example}Let $\mathcal F$ the foliation of $\mathbf C^2$ def\/ined by $\{{\rm d}y-y{\rm d}x = 0\}$. It corresponds to the dif\/ferential f\/ield $\big(\mathbf C(x,e^x), \big\langle\frac{{\rm d}}{{\rm d}x}\big\rangle\big) $.
\end{Example}

\begin{Remark}Throughout this appendix ``connection'' means ``f\/lat connection''.
\end{Remark}

\subsection[Invariant $\mathcal F$-connections]{Invariant $\boldsymbol{\mathcal F}$-connections}

Let us consider from now a foliated manifold of dimension $n$ and rank~$r$ without rational f\/irst integrals $(M,\mathcal F)$, an algebraic group $G$ and a~principal irreducible $G$-bundle $\pi\colon P\to M$. A~$G$-invariant connection in the direction of $\mathcal F$ is a foliation $\mathcal F'$ of rank $r$ in $P$ such that:
\begin{itemize}\itemsep=0pt
\item[(a)] $\pi\colon (P,\mathcal F')\to (M,\mathcal F)$ is a dominant regular map of foliated varieties;
\item[(b)] The foliation $\mathcal F'$ is invariant by the action of $G$ in $P$.
\end{itemize}
With this def\/inition $(\mathbf C(M), \mathfrak X_{\mathcal F})\to (\mathbf C(P), \mathfrak X_{\mathcal F'})$ is a dif\/ferential f\/ield extension. Also, each element $g\in G$ induces a dif\/ferential f\/ield automorphism of $(\mathbf C(P), \mathfrak X_{F'})$ that f\/ixes $(\mathbf C(M), \mathfrak X_F)$ by setting $(g\cdot f)(x) = f(x\cdot g)$.

Let $\mathfrak g$ be the Lie algebra of $G$. There is a way of def\/ining a $G$-equivariant form $\Theta_{\mathcal F'}$ with values in $\mathfrak g$, and def\/ined in ${\rm d}\pi^{-1}(T\mathcal F)$ in such way that $T\mathcal F'$ is the kernel of $\Theta_{\mathcal F'}$. First, there is a canonical form~$\Theta_0$ def\/ined in $\ker(d\pi)$ that sends each vertical vector $X_p\in \ker d_p\pi\subset T_pP$ to the element $\mathfrak g$ that verif\/ies,
\begin{gather*}\left.\frac{{\rm d}}{{\rm d}\varepsilon}\right|_{\varepsilon = 0} p \cdot \exp{\varepsilon A} = X_p.\end{gather*}
This form is $G$-equivariant in the sense that $R_g^*(\Theta_0) = \operatorname{Adj}_{g^{-1}} \circ \omega$. We have a decomposition of the vector bundle ${\rm d}\pi^{-1}(T\mathcal F) = \ker({\rm d}\pi) \oplus T\mathcal F'$. This decomposition allows to extend~$\Theta_0$ to a~form $\Theta_{\mathcal F'}$ def\/ined for vectors in ${\rm d}\pi^{-1}(T\mathcal F)$ whose kernel is precisely~$T\mathcal F$. We call \emph{horizontal frames} to those sections~$s$ of $\pi$ such that $s^*(\Theta_{\mathcal F}) = 0$.

\subsection{Picard--Vessiot bundle}

We say that the principal $G$-bundle with invariant $\mathcal F$-connection $\pi\colon (P,\mathcal F')\to (M,\mathcal F)$ is a~\emph{Picard--Vessiot} bundle if there are no rational f\/irst integrals of~$\mathcal F'$. The notion of Picard--Vessiot bundle corresponds exactly to that of primitive extension of Kolchin. In such case~$G$ is the group of dif\/ferential f\/ield automorphisms of $(\mathbf C(P), \mathfrak X_{F'})$ that f\/ix $(\mathbf C(M), \mathfrak X_F)$ and $(\mathbf C(M), \mathfrak X_F)\to (\mathbf C(P), \mathfrak X_{F'})$ is a strongly normal extension. Moreover, any strongly normal extension with constant f\/ield $\mathbf C$ can be constructed in this way (see \cite[Chapter~VI, Section~10, Theorem 9]{Kolchin}).

One of the most remarkable properties of strongly normal extensions is the Galois correspondence (from \cite[Chapter~VI, Section~4]{Kolchin}).

\begin{Theorem}[Galois correspondence] Assume that $(\mathbf C(M),\mathfrak X_{\mathcal F})\to (\mathbf C(P),\mathfrak X_{\mathcal F'})$ is strongly normal with group of automorphisms~$G$. Then, there is a bijection between the set of intermediate differential field extensions and algebraic subgroups of~$G$. To each intermediate differential field extension, it corresponds the group of automorphisms that fix such an extension point-wise. To each subgroup of automorphisms it corresponds its subfield of fixed elements.
\end{Theorem}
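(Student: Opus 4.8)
The statement is the classical Galois correspondence for strongly normal extensions due to Kolchin; the plan is to recall its proof in the geometric language of foliated principal bundles developed above, so as to make it self-contained. One direction is constructive: given an algebraic subgroup $H \subseteq G$, I would form the rational quotient $P/H$, which exists as an irreducible variety by Rosenlicht's theorem, with $P \to P/H$ and the induced map $P/H \to M$ both dominant. Since $\mathcal F'$ is $G$-invariant it is in particular $H$-invariant, hence descends to a rank-$r$ foliation on $P/H$ for which both maps are morphisms of foliated varieties; and since the generators of $\mathfrak X_{\mathcal F'}$ are $H$-invariant, $\mathbf C(P/H) = \mathbf C(P)^H$ inherits a space of derivations, so that $\mathbf C(M) \subseteq \mathbf C(P)^H \subseteq \mathbf C(P)$ is an intermediate differential field extension.

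For the reverse assignment, given an intermediate $\mathbf C(M) \subseteq F \subseteq \mathbf C(P)$, I would set $H = \{\sigma \in G : \sigma|_F = \mathrm{id}\}$; this is Zariski-closed because $F$ is finitely generated over $\mathbf C(M)$, so $H$ is cut out by finitely many algebraic conditions on the parameter variety $G$. The crucial step is to verify that $F \subseteq \mathbf C(P)$ is itself strongly normal with group of automorphisms $H$: strong normality is inherited by upper subextensions once one knows no new constants appear, which holds here because $\mathbf C(M) \subseteq \mathbf C(P)$ already has constant field $\mathbf C$. Realizing $F$ geometrically as $\mathbf C(P/H)$ then closes the loop and identifies the two assignments as inverse to each other.

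Bijectivity reduces to the two identities $\operatorname{Gal}(\mathbf C(P)/\mathbf C(P)^H) = H$ and $\mathbf C(P)^{\operatorname{Gal}(\mathbf C(P)/F)} = F$. The first holds because $P \to P/H$ is generically an $H$-torsor, so no element of $G$ outside $H$ can fix $\mathbf C(P)^H$; the second is exactly the strong normality of $F \subseteq \mathbf C(P)$ established above. Order-reversal ($F_1 \subseteq F_2$ if and only if $H_1 \supseteq H_2$) is then immediate from the definitions. I expect the main obstacle to be precisely the reverse assignment: showing that every intermediate differential field produces a strongly normal subextension whose automorphism group is Zariski-closed and large enough to recover $F$ as its fixed field. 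This is where the no-new-constants hypothesis and the existence of rational quotients (Rosenlicht) are indispensable, and where the purely geometric argument must be supplemented by the field-theoretic input of \cite[Chapter~VI, Section~4]{Kolchin}.
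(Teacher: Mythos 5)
The paper does not prove this theorem at all: it is stated as a quotation of Kolchin's Galois correspondence for strongly normal extensions, with the proof delegated entirely to \cite[Chapter~VI, Section~4]{Kolchin}. Your sketch is therefore not in conflict with the paper; it is a geometric gloss on the same external result, and it is broadly sound. What you add that the paper does not is a concrete realization of the subgroup-to-subfield direction inside the foliated-bundle dictionary of this appendix: taking the Rosenlicht quotient $P/H$, observing that the $G$-invariant foliation descends, and identifying $\mathbf C(P)^H$ with $\mathbf C(P/H)$ is genuinely self-contained and fits the paper's framework well, as does the torsor argument for $\operatorname{Gal}(\mathbf C(P)/\mathbf C(P)^H)=H$. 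The reverse direction, however, is where your proof and the paper's citation coincide in substance: the assertion that ``strong normality is inherited by upper subextensions once one knows no new constants appear'' is not a formal consequence of the no-new-constants property --- it is itself a nontrivial theorem of Kolchin (requiring his machinery of strong isomorphisms into a universal extension), and the identity $\mathbf C(P)^{\operatorname{Gal}(\mathbf C(P)/F)}=F$ is exactly the content one cannot obtain from the geometric picture alone. Since you explicitly flag this as the step where Kolchin's field-theoretic input is indispensable, this is an acknowledged reliance rather than a hidden gap; just be aware that your argument is a proof modulo the same reference the authors invoke, not an independent one. One small point worth making precise if you develop this further: to conclude $\operatorname{Gal}(\mathbf C(P)/\mathbf C(P)^H)=H$ you implicitly use that every (strong) differential automorphism of $\mathbf C(P)$ over the intermediate field is already an element of $G$, which holds because it is in particular an automorphism over $\mathbf C(M)$.
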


\subsection[The Picard--Vessiot bundle of an invariant $\mathcal F$-connection]{The Picard--Vessiot bundle of an invariant $\boldsymbol{\mathcal F}$-connection}

Let us consider an irreducible principal $G$-bundle $\pi\colon (P,\mathcal F') \to (M,\mathcal F)$ endowed with an invariant $\mathcal F$-connection $\mathcal F'$. We assume that $\mathcal F$ has no rational f\/irst integrals. A result of Bonnet (see \cite[Theo\-rem~1.1]{Bonnet}) ensures that for a very generic point in $M$ the leaf passing through such point is Zariski dense in $M$. Let us consider such a Zariski-dense leaf $\mathcal L$ of $\mathcal F$ in~$M$. Let us consider any leaf $\mathcal L'$ of $\mathcal F'$ in $P$ that projects by $\pi$ onto $\mathcal L$. Its Zariski closure is unique in the following sense:

\begin{Theorem}\label{uniqueness}Let $\mathcal L'$ and $\mathcal L''$ two leaves of $\mathcal F'$ whose projections by $\pi$ are Zariski dense in~$M$. Then, there exist an element $g\in G$ such that $\overline{\mathcal L'} \cdot g = \overline{\mathcal L''}$.
\end{Theorem}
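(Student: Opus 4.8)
The plan is to recognise $\overline{\mathcal L'}$ as the Zariski closure of a multivalued horizontal section of the flat principal $G$-bundle $P$, to use that two horizontal sections of a flat principal bundle differ by a constant element of $G$, and to promote this to the statement that any two leaf-closures are $G$-translates via the uniqueness of the Picard--Vessiot extension. I would begin with the relevant density statement. Since $\mathcal F'$ is $G$-invariant, the translate $R_g(\mathcal L')$ of a leaf is again a leaf, lying over $\mathcal L:=\pi(\mathcal L')$; and since $G$ acts simply transitively on the fibres of $\pi$, every leaf of $\mathcal F'$ over $\mathcal L$ is of the form $R_g(\mathcal L')$, so that $\pi^{-1}(\mathcal L)=\bigcup_{g\in G}R_g(\mathcal L')$. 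By hypothesis (Bonnet's theorem) $\mathcal L$ is Zariski dense in $M$, hence $\pi^{-1}(\mathcal L)$ is Zariski dense in $P$, and therefore $\bigcup_{g\in G}\overline{\mathcal L'}\cdot g$ is Zariski dense in $P$; in particular $Z:=\overline{\mathcal L'}$ is an irreducible subvariety dominating $M$, and likewise $Z'':=\overline{\mathcal L''}$. A dimension count on the dominant action map $G\times Z\to P$ then shows that the stabiliser $H:=\{g\in G:Z\cdot g=Z\}$ --- a closed subgroup of $G$ acting freely on each fibre $Z_m=Z\cap\pi^{-1}(m)$ --- satisfies $\dim H\le\dim Z-\dim M$.

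The core of the proof is to show that $Z$ is, over a dense open $M_0\subseteq M$, a reduction of $P$ to the subgroup $H$; equivalently, that $(Z,\mathcal F'|_Z)\to(M,\mathcal F)$ is a Picard--Vessiot $H$-bundle in the sense of the appendix. Here I would use that $\mathcal F'$ is flat: for $m\in\mathcal L$ the set $\mathcal L'\cap\pi^{-1}(m)$ is the orbit of a holonomy subgroup of $G$, whose Zariski closure is algebraic; and since $\mathcal F$ has no rational first integrals the differential field $\mathbf C(Z)$ has no new constants over $\mathbf C(M)$. The standard torsor argument of Picard--Vessiot theory (see the appendix and \cite[Chapter~VI, Section~10]{Kolchin}) then forces $Z_m$ to be a single right $H$-coset for generic $m$, so $\dim H=\dim Z-\dim M$, and the same analysis applied to $\mathcal L''$ gives a subgroup $H''$. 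Finally, $\mathbf C(Z)$ and $\mathbf C(Z'')$ are both Picard--Vessiot extensions of $(\mathbf C(M),\mathfrak X_{\mathcal F})$ attached to the same connection $\mathcal F'$; by uniqueness of the Picard--Vessiot extension they are $\mathbf C(M)$-differentially isomorphic, which forces $H''=g_0^{-1}Hg_0$ for some $g_0\in G$, and --- because two horizontal sections of a flat $G$-bundle differ by a constant in $G$, so two leaf-closures sitting over isomorphic $H$-reductions differ by a right translation --- one concludes $\overline{\mathcal L'}\cdot g_0=\overline{\mathcal L''}$.

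I expect the main obstacle to be precisely the middle step: controlling the Zariski closure of a leaf fibrewise, i.e.\ proving that $Z_m$ does not spread out beyond a single coset of $H$. This is the heart of Picard--Vessiot theory --- it is what makes the Galois group well defined --- and it rests crucially on the absence of new constants (guaranteed here by the existence of a Zariski-dense leaf of $\mathcal F$) together with the freeness and transitivity of the $G$-action on the fibres of $\pi$; once this structural fact is in place, the remainder is dimension counting and the classical uniqueness of the Picard--Vessiot extension. A secondary point to be careful about is that $\mathcal F'$ is only assumed to be an invariant connection, not a Picard--Vessiot bundle, so $P_\eta$ need not be a trivial $G$-torsor over $\mathbf C(M)$; I would handle this by working throughout with the leaf-closures $Z,Z''$ and their $H$-reduction structure rather than with global trivialisations.
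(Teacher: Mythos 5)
Your strategy is far heavier than what the statement requires, and as written it has a genuine gap at exactly the point you yourself flag as ``the main obstacle'': the claim that $Z_m=\overline{\mathcal L'}\cap\pi^{-1}(m)$ is a single right $H$-coset for generic $m$ is asserted via ``the standard torsor argument of Picard--Vessiot theory'' but never carried out, and in the logical order of this paper it cannot simply be quoted. Theorem~\ref{uniqueness} is the statement \emph{from which} the paper defines the Picard--Vessiot bundle $(L^\star,\mathcal F^\star)$, its structure group $H$, and hence the Picard--Vessiot extension; appealing to the uniqueness of the Picard--Vessiot extension (and to the identification of $\overline{\mathcal L'}$ with its torsor) in order to prove Theorem~\ref{uniqueness} is circular unless you independently re-derive that identification from Kolchin or van der Put--Singer, which is essentially as much work as the whole section. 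A second, smaller gap: even granting that $\mathbf C(Z)$ and $\mathbf C(Z'')$ are isomorphic Picard--Vessiot extensions, concluding $\overline{\mathcal L'}\cdot g_0=\overline{\mathcal L''}$ for a \emph{constant} $g_0$ requires showing that the fibrewise transition element $m\mapsto g_0(m)$ is a rational first integral of $\mathcal F'|_{Z''}$ and hence constant by absence of new constants; ``two horizontal sections of a flat bundle differ by a constant'' is not a substitute for this, since $Z$ and $Z''$ are not sections.

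All of this machinery is unnecessary. The paper's proof is a three-line double-inclusion argument using only $G$-invariance of $\mathcal F'$ and the density hypothesis: since $\pi(\mathcal L')$ is Zariski dense and $\pi(\overline{\mathcal L''})$ contains a dense open set, there is a point $x\in\pi(\mathcal L')\cap\pi(\overline{\mathcal L''})$; choose $p\in\mathcal L'$ and $q\in\overline{\mathcal L''}$ over $x$ and let $g\in G$ be the unique element with $p\cdot g=q$. Then $\mathcal L'\cdot g$ is the leaf through $q$, and since $\overline{\mathcal L''}$ is a union of leaves it contains $\mathcal L'\cdot g$, whence $\overline{\mathcal L'}\cdot g\subseteq\overline{\mathcal L''}$. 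Exchanging the roles of the two leaves gives the reverse inclusion up to a translation, and irreducibility forces equality. You would do well to look for this kind of direct argument before reaching for the full torsor formalism, especially when the statement being proved is itself a foundational ingredient of that formalism.
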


\begin{proof}By construction, there is some $x\in\pi(\mathcal L')\cap\pi(\overline{\mathcal L''})$. Let us consider $p\in \pi^{-1}(\{x\})\cap \mathcal L'$ and $q\in\pi^{-1}(\{x\})\cap \overline{\mathcal L''}$. Since $p$ and $q$ are in the same f\/iber, there is a unique element $g\in G$ such that $p\cdot g = q$. By the $G$-invariance of the connection $\mathcal L'\cdot g$ is the leaf of $\mathcal F'$ that passes through $q$. The set
$\overline{\mathcal L''}$ is, by construction, union of leaves of $\mathcal F'$ and contains the point $q$. Thus, $\overline{\mathcal L' \cdot g} \subseteq \overline{\mathcal L''}$, and $\overline{\mathcal L'}\cdot g \subseteq \overline{\mathcal L''}$. Now, by exchanging the roles of $\mathcal L'$ and $\mathcal L''$, we prove that there is an element $h$ such that $\overline{\mathcal L''}\cdot h\subseteq \overline{\mathcal L'}$. It follows $h = g^{-1}$. This f\/inishes the proof.
\end{proof}

Let $L$ be the Zariski closure of $\mathcal L'$. Let us consider the algebraic subgroup
\begin{gather*}H = \{g\in G \colon L \cdot g = L\}\end{gather*}
stabilizing $L$. The projection $\pi$ restricted to $L$ is dominant, thus there is a Zariski open subset~$M^\star$ such that $\pi^\star \colon L^\star \to M^\star$ is surjective. Let us call $\mathcal F^\star$ the restriction of $\mathcal F'$ to $L^\star$. It follows that the bundle:
$\pi^\star \colon (L^\star,\mathcal F^\star) \to (M^\star, \mathcal F|_{M^\star})$ is a principal bundle of structure group $H$ called Picard--Vessiot bundle. The dif\/ferential f\/ield extension $(\mathbf C(M),\mathfrak X_{\mathcal F}) \to (\mathbf C(L^\star),\mathfrak X_{\mathcal F^\star})$ is the so-called Picard--Vessiot extension associated to the connection. The algebraic group $H$ is the dif\/ferential Galois group of the connection.

\subsection{Split of a connection}

Let us consider a pair of morphisms of foliated varieties
\begin{gather*}\phi_j\colon \ (M_j,\mathcal F_j)\to(M,\mathcal F),\qquad \mbox{for} \quad j=1,2.\end{gather*}
Then, we can def\/ine in $M_1\times_M M_2$ a foliation $\mathcal F_1\times_{\mathcal F} \mathcal F_2$ in the following way. A vector $X = (X_1,X_2)$ is in $T(\mathcal F_1\times_{\mathcal F}\mathcal F_2)$ if and only if ${\rm d}\phi_1(X_1)= {\rm d}\phi_2(X_2)\in T\mathcal F$. Let us consider $(P,\mathcal F')$ a~principal $\mathcal F$ connection. Note that the projection
\begin{gather*}\pi_1 \colon \ (M_1\times_M P, \mathcal F_1\times_{\mathcal F} \mathcal F')\to (M_1,\mathcal F_1)\end{gather*}
is a~principal $G$-bundle endowed of a $\mathcal F_1$-connection. We call this bundle the pullback of $(P,\mathcal F')$ by $\phi_1$.

We also may consider the trivial $G$-invariant connection $\mathcal F_0$ in the trivial principal $G$-bundle
\begin{gather*}\pi_0\colon \ (M\times G,\mathcal F_0) \to (M,\mathcal F),\end{gather*}
for what the leaves of $\mathcal F_0$ are of the form $(\mathcal L, g)$ where $\mathcal L$ is a leaf of~$\mathcal F$ and~$g$ a f\/ixed element of~$G$. We say that the $G$-invariant connection~$(P,\mathcal F')$ is rationally trivial if there is a birational $G$-equivariant isomorphism of foliated manifolds between $(P,\mathcal F)$ and $(M\times G, \mathcal F_0)$.

Invariant connections are always trivialized after pullback; there is a universal $G$-equivariant isomorphism def\/ined over~$P$
\begin{gather*}(P \times G, \mathcal F'\times_{\mathcal F} \mathcal F_0) \to (P\times_M P, \mathcal F'\times_{\mathcal F}\mathcal F'), \qquad (p, g) \mapsto (p, p\cdot g),\end{gather*}
that trivializes any $G$-invariant connection. However, the dif\/ferential f\/ield $(\mathbf C(P),\mathfrak X_{\mathcal F'})$ may have new constant elements. To avoid this, we replace the pullback to $P$ by a pullback to the Picard--Vessiot bundle~$L^\star$
\begin{gather*}(L^\star \times G, \mathcal F^\star\times_{\mathcal F} \mathcal F_0) \to (L^\star\times_M P, \mathcal F^\star\times_{\mathcal F}\mathcal F'), \qquad (p, g) \mapsto (p, p\cdot g).\end{gather*}

The Picard--Vessiot bundle has some minimality property. It is the smallest bundle on $M$ that trivializes the connection. We have the following result.

\begin{Theorem}\label{uniqueness2} Let us consider $\pi\colon (P,\mathcal F') \to (M,\mathcal F)$ be as above, $\pi^\star \colon (L^\star,\mathcal F^\star)\to (M,\mathcal F)$ the Picard--Vessiot bundle, and and $\phi\colon \big(\tilde M, \tilde{\mathcal F}\big) \to (M, \mathcal F)$ any dominant rational map of foliated varieties such that:
\begin{itemize}\itemsep=0pt
\item[$(a)$] $\tilde{\mathcal F}$ has no rational first integrals in $\tilde M$;
\item[$(b)$] the pullback $\big(\tilde M \times_M P,\tilde F \times_{\mathcal F}\mathcal F'\big)\to \big(\tilde M, \tilde{\mathcal F}\big)$ is rationally trivial.
\end{itemize}
There is a dominant rational map of foliated varieties $\psi\colon \tilde M \dasharrow L^\star$ such that $\pi^\star\circ\psi = \phi$ in their common domain.
\end{Theorem}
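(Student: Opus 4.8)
The plan is to turn hypothesis~(b) into a rational \emph{horizontal} section over $\phi$, move it into the Picard--Vessiot bundle $L^\star$ by a single element of $G$, and then extract dominance from the uniqueness statement Theorem~\ref{uniqueness}. Throughout, all maps are understood to be defined on suitable Zariski-dense open subsets.

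First I would produce the section. By~(b) there is a $G$-equivariant birational isomorphism of foliated varieties over $\tilde M$ between the pullback $\big(\tilde M\times_M P,\tilde{\mathcal F}\times_{\mathcal F}\mathcal F'\big)$ and the trivial connection $(\tilde M\times G,\mathcal F_0)$. Composing the rational section $\tilde M\dasharrow\tilde M\times\{e\}$ with this isomorphism and then with the projection $\tilde M\times_M P\to P$ yields a rational map $\sigma\colon\tilde M\dasharrow P$ with $\pi\circ\sigma=\phi$ which is moreover a morphism of foliated varieties, i.e., $d\sigma$ carries $T\tilde{\mathcal F}$ isomorphically onto its image inside $T\mathcal F'$. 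Each of the three building maps has this property because the relevant ranks all equal the rank $r$ of $\mathcal F$; only routine generic-domain bookkeeping is needed here.

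Next I would locate the image. Since $\tilde{\mathcal F}$ has no rational first integrals, the result of Bonnet invoked in Appendix~\ref{ApA} provides a Zariski-dense leaf $\tilde{\mathcal L}$ of $\tilde{\mathcal F}$ in $\tilde M$. As $\sigma$ is a morphism of foliated varieties, $\sigma(\tilde{\mathcal L})$ is contained in a single leaf $\mathcal L'$ of $\mathcal F'$, and since $\phi$ is dominant its projection $\pi(\mathcal L')\supseteq\phi(\tilde{\mathcal L})$ is Zariski-dense in $M$. Let $L$ be the Zariski closure of the leaf of $\mathcal F'$ used to construct $L^\star$. By Theorem~\ref{uniqueness} there is $g\in G$ with $\overline{\mathcal L'}\cdot g=L$. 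Because the $G$-action preserves $\mathcal F'$, the map $\psi:=\sigma\cdot g$ is again a rational morphism of foliated varieties with $\pi\circ\psi=\phi$, and $\overline{\psi(\tilde M)}=\overline{\psi(\tilde{\mathcal L})}\subseteq\overline{\mathcal L'}\cdot g=L$, so after restriction to the chart $L^\star$ this defines a rational map $\psi\colon\tilde M\dasharrow L^\star$ with $\pi^\star\circ\psi=\phi$.

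Finally, for dominance: the image closure $\overline{\psi(\tilde M)}$ is the Zariski closure of the leaf $\mathcal L'\cdot g$ of $\mathcal F'$, contained in $L$ and projecting dominantly to $M$; applying Theorem~\ref{uniqueness} once more gives $h\in G$ with $\overline{\mathcal L'\cdot g}\cdot h=L$, whence $L\cdot h^{-1}\subseteq L$, and since $L$ is irreducible with $\dim(L\cdot h^{-1})=\dim L$ this forces $L\cdot h^{-1}=L$ and hence $\overline{\psi(\tilde M)}=L$. So $\psi$ is dominant, finishing the argument. The genuinely delicate point is the very first step, namely extracting an honest rational horizontal section (a morphism of foliated varieties, not merely a rational section of the $G$-bundle) from rational triviality and keeping all maps defined on a common dense open set; everything afterwards is formal manipulation with the $G$-action and Theorem~\ref{uniqueness}.
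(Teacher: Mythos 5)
Your argument is correct and follows essentially the same route as the paper: you build the map $\pi_2\circ\tau\circ\iota\colon\tilde M\dasharrow P$ from the trivialization, use Bonnet's theorem to see its image lies in a leaf of $\mathcal F'$ projecting densely to $M$, and then apply a right translation together with Theorem~\ref{uniqueness} to land dominantly in $L^\star$. Your closing dominance argument is merely a more explicit (and slightly redundant, since $\overline{\mathcal L'}\cdot g=L$ already gives $\overline{\mathcal L'\cdot g}=L$) version of the paper's final sentence.
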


\begin{proof} Let us take $\tau \colon \tilde M \times G \dasharrow \tilde M \times_M P$ a birational trivialization, $\pi_2 \colon \tilde M \times_M P \to P$ be the projection in the second factor, and $\iota \colon \tilde M \to \tilde M \times G$ the inclusion $p \mapsto (p,e)$. Then, $\tilde\psi = \pi_2 \circ \tau \circ \iota$ is a rational map from $\tilde M$ to $P$ whose dif\/ferential sends $T\tilde{\mathcal F}$ to $T\mathcal F$. By Bonnet theorem, $\tilde M$~is the Zariski closure of a leaf of $\tilde{\mathcal F}$ that projects by $\phi$ into a Zariski dense leaf of~$\mathcal F$. From this, $\tilde\psi$~contains a dense leaf of $\mathcal F'$ in $P$. By applying a suitable right translation in $P$ and the uniqueness Theorem~\ref{uniqueness}, we obtain the desired conclusion.
\end{proof}

\subsection{Linear connections} \label{A6}

Let $(M,\mathcal F)$ be as above, of dimension $n$ and rank $r$. Let $\xi\colon E\to M$ be a vector bundle of rank~$k$. A linear integrable $\mathcal F$-connection is a foliation $\mathcal F_E$ of rank~$r$ which is compatible with the structure of vector bundle in the following sense: the point-wise addition of two leaves of any dilation of a leaf is also a leaf. This can also be stated in terms of a covariant derivative operator~$\nabla$ wich is def\/ined only in the direction of~$\mathcal F$. First, the kernel of ${\rm d}\xi$ is naturally projected onto~$E$ itself
\begin{gather*}{\rm vert}_0 \colon \ \ker({\rm d}\xi) \to E, \qquad X_v \mapsto w,\end{gather*}
where $\left.\frac{{\rm d}}{{\rm d}\varepsilon}\right|_{\varepsilon = 0} v + \varepsilon w = X_v$. Then, the decomposition of ${\rm d}\xi^{-1}(T\mathcal F)$ as $\ker({\rm d}\xi)\oplus T\mathcal F_E$ allows us to extend ${\rm vert_0}$ to a projection
\begin{gather*}{\rm vert}\colon \ {\rm d}\xi^{-1}(T\mathcal F) \to E.\end{gather*}
Thus, we def\/ine for each section $s$ its covariant derivative $\nabla s = s^*({\rm vert}\circ {\rm d}s|_{T\mathcal F})$. This is a $1$-form on~$M$ def\/ined only for vectors in $T \mathcal F$. This covariant derivative has the desired properties, it is additive and satisf\/ies the Leibniz formula
\begin{gather*}\nabla (fs) = {\rm d}f|_{T\mathcal F}\otimes s + f \nabla s.\end{gather*}
In general, we write for $X$ a vector in $T\mathcal F$, $\nabla_X s$ for the contraction of $\nabla s$ with the vector $X$. It is an element of $E$ over the same base point in $M$ that the vector $X$. We call \emph{horizontal sections} to those sections $s$ of $\xi$ such that $\nabla s = 0$.

Let $\pi\colon R^1(E)\to M$ be the bundle of linear frames in $E$. It is a principal linear ${\rm GL}_k(\mathbf C)$-bundle. The foliation $\mathcal F_E$ induces a foliation $\mathcal F'$ in $R^1(E)$ that is a $G$-invariant $\mathcal F$-connection. Let us consider the Picard--Vessiot bundle, $(L^\star,\mathcal F^\star)$. The uniqueness Theorem~\ref{uniqueness2} on the Picard--Vessiot bundle, can be rephrased algebraically in the following way. The Picard--Vessiot extension $(\mathbf C(M), \mathfrak X_{\mathcal F}) \to (\mathbf C(L^\star), \mathfrak X_{\mathcal F^\star})$ is characterized by the following properties (cf.~\cite[Section~1.3]{SingerVanderput}):
\begin{itemize}\itemsep=0pt
\item[(a)] there are no new constants, $\mathbf C(L^\star) = \mathbf C$;
\item[(b)] it is spanned, as a f\/ield extension of $\mathbf C(M)$, by the coef\/f\/icients of a fundamental matrix of solutions of the dif\/ferential equation of the horizontal sections.
\end{itemize}

\subsection{Associated connections}\label{ap7_associated}

Let $\pi\colon (P,\mathcal F')\to (M,\mathcal F)$ be a $G$-invariant connection, as before, where $\mathcal F$ is a foliation in $M$ without rational f\/irst integrals. Let us consider $\phi\colon G\to {\rm GL}(V)$ a f\/inite-dimensional linear representation of~$G$. It is well known that the associated bundle $\pi_P \colon V_P \to M$,
\begin{gather*}V_P = P\times_G V = (P\times V)/G, \qquad (p\cdot g,v) \sim (p,g\cdot v),\end{gather*}
is a vector bundle with f\/iber $V$. Here we represent the action of $G$ in $V$ by the same operation symbol than before. The $G$-invariant connection
$\mathcal F'$ rises to a foliation in $P\times G$ and then it is projected to a foliation $\mathcal F_V$ in $V_P$. In this way, the projection
\begin{gather*}\pi_P\colon \ (V_P,\mathcal F_V)\to (M,\mathcal F),\end{gather*}
turns out to be a linear $\mathcal F$-connection. It is called the \emph{Lie--Vessiot} connection induced in the associated bundle. The Galois group of the principal and the associated Lie--Vessiot connection are linked in the following way.

\begin{Theorem}\label{associated galois}Let $H\subset G$ be the Galois group of the principal connection $\mathcal F'$. Then, the Galois group of the associated Lie--Vessiot connection $\mathcal F_V$ is $\phi(H)\subseteq {\rm GL}(V)$.
\end{Theorem}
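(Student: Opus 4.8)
The plan is to identify the Picard--Vessiot bundle of the Lie--Vessiot connection $\mathcal F_V$ explicitly, as the image inside $R^1(V_P)$ of the Picard--Vessiot bundle $L^\star$ of the principal connection $\mathcal F'$ under a natural equivariant map, and then read off the structure group. Fix a basis of $V$. A point $p\in P$ over $m\in M$ determines a linear frame $\Phi(p)\colon V\to (V_P)_m$, $v\mapsto[p,v]$, and this gives a regular map $\Phi\colon P\to R^1(V_P)$ covering $\mathrm{id}_M$. Since $[p\cdot g,v]=[p,\phi(g)v]$, the map $\Phi$ is $\phi$-equivariant, $\Phi(p\cdot g)=\Phi(p)\circ\phi(g)$, where $\operatorname{GL}(V)$ acts on $R^1(V_P)$ on the right by precomposition of frames. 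First I would check that $\Phi$ carries the foliation $\mathcal F'$ into the foliation $\mathcal F_V'$ on $R^1(V_P)$ induced by $\mathcal F_V$ (in the sense of Section~\ref{A6}): a local leaf of $\mathcal F'$ is the graph of a horizontal frame $s$ of $P$; by the very definition of the Lie--Vessiot connection the horizontal sections of $V_P$ are $m\mapsto[s(m),v_0]$ with $v_0$ constant, so $\Phi\circ s$ is a horizontal frame of $V_P$ and its graph is a leaf of $\mathcal F_V'$. Hence $\Phi$ maps leaves of $\mathcal F'$ onto open pieces of leaves of $\mathcal F_V'$.

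Next I would compute the Zariski closure of a leaf. Let $\mathcal L'$ be a leaf of $\mathcal F'$ whose projection to $M$ is Zariski dense, let $L=\overline{\mathcal L'}$, and let $H=\{g\in G\colon L\cdot g=L\}$ be the Galois group of $\mathcal F'$, so that $L^\star\to M^\star$ is a principal $H$-bundle and the fibre of $L^\star$ over a generic $m$ is a single $H$-coset $p_0\cdot H$. Let $\mathcal M$ be the leaf of $\mathcal F_V'$ containing $\Phi$ of a connected open piece of $\mathcal L'$ sitting inside $L^\star$. Using that a connected open piece of a leaf of an algebraic foliation is Zariski dense in the Zariski closure of the whole leaf, together with the fact that the image of a Zariski dense subset under a morphism is Zariski dense in the closure of the image, I obtain $\overline{\mathcal M}=\overline{\Phi(L)}$. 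Over a generic $m$ the fibre of $\overline{\Phi(L)}$ is $\Phi(p_0)\cdot\overline{\phi(H)}=\Phi(p_0)\cdot\phi(H)$, because the image of an algebraic group under an algebraic homomorphism is a closed algebraic subgroup; so, after restricting to a suitable Zariski open $M^\star$, the variety $\overline{\Phi(L)}$ is a principal $\phi(H)$-bundle over $M^\star$ inside $R^1(V_P)$, $\phi(H)$-invariant and carrying the restriction of $\mathcal F_V'$, and since it contains the Zariski dense leaf $\mathcal M$ the foliation $\mathcal F_V'|_{\overline{\Phi(L)}}$ has no rational first integrals. Therefore $\overline{\Phi(L)}$ is the Picard--Vessiot bundle of $\mathcal F_V$. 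Its structure group is $\phi(H)$: indeed, for $A\in\operatorname{GL}(V)$ the equality $\overline{\mathcal M}\cdot A=\overline{\mathcal M}$ means $\Phi(p_0)\,\phi(H)\,A=\Phi(p_0)\,\phi(H)$ over generic $m$, and since $\Phi(p_0)$ is invertible this forces $A\in\phi(H)$; conversely $\phi(H)$ obviously preserves $\overline{\Phi(L)}$. Hence $\operatorname{Gal}(\mathcal F_V)=\phi(H)$.

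\textbf{Main obstacle.} The delicate point is the bookkeeping around Zariski closures versus analytic leaves: one must use that a connected open piece of a leaf is Zariski dense in the Zariski closure of the leaf, and one must pass to a common Zariski open $M^\star$ over which $L$, $\overline{\Phi(L)}$ and all bundle projections are simultaneously well behaved. An alternative, cleaner way to close this gap is to invoke the minimality Theorem~\ref{uniqueness2}: apply it with $\tilde M=\overline{\Phi(L)}$ carrying the restricted foliation, $P$ replaced by $R^1(V_P)$, and the pullback trivialized by the universal trivialization $(q,g)\mapsto(q,q\cdot g)$; this produces a dominant rational map from $\overline{\Phi(L)}$ to the Picard--Vessiot bundle of $\mathcal F_V$ over $M$, which together with the dimension and structure-group count from the fibrewise description above forces $\overline{\Phi(L)}$ to be that Picard--Vessiot bundle, giving again $\operatorname{Gal}(\mathcal F_V)=\phi(H)$.
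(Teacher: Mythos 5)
Your proposal is correct and follows essentially the same route as the paper: the paper's proof introduces exactly your map $\Phi$ (there called $\tilde\pi\colon p\mapsto([p,v_1],\ldots,[p,v_r])$), observes that it carries $T\mathcal F'$ into the induced foliation on $R^1(V_P)$, and concludes that $\tilde\pi(L^\star)$ is the Picard--Vessiot bundle of $\mathcal F_V$ with structure group $H/K\cong\phi(H)$, where $K$ is the stabilizer of the chosen basis. The only difference is that you spell out the Zariski-closure bookkeeping (and the alternative via the minimality theorem) that the paper leaves implicit.
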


\begin{proof}Let us consider the bundle of frames $R^1(V_P)$, with its induced invariant connection $\mathcal F''$. Let us f\/ix a basis $\{v_1,\ldots,v_r\}$ of $V$. Then, we have a map
\begin{gather*}\tilde \pi \colon \ P \to R^{1}(V_P), \qquad p\mapsto ([p, v_1],\ldots, [p, v_r]),\end{gather*}
where the pair $[p, v]$ represents the class of the pair $(p,v)\in P\times V$. By construction, $\tilde\pi$ sends $T\mathcal F'$ to $T\mathcal F''$. It implies that, if $\mathcal L^{\star}$ is a Picard--Vessiot bundle for $\mathcal F'$ then $\tilde\pi(L^\star)$ is a Picard--Vessiot bundle for $\mathcal F''$. Second, if $\mathcal L^\star$ is a principal $H$ bundle, then $\tilde\pi(L^\star)$ is a principal $H/K$ bundle where $K$ is the subgroup of $H$ that stabilizes the basis $\{v_1,\ldots,v_r\}$.
\end{proof}

Let us discuss how the covariant derivative operator in $\nabla$ is def\/ined in terms of $\Theta_{\mathcal F'}$ and the action of $G$ in $V$. Let us denote by $\phi'\colon \mathfrak g\to \mathfrak{gl}(V)$ the induced Lie algebra morphism. Let~$s$ be a local section of~$\xi$. Let us consider the canonical projection $\bar\pi \colon P \times V \to V(P)$. This turns out to be also a principal bundle, here the action on pairs is $(p,v)\cdot g = \big(p \cdot g, g^{-1} \cdot v\big)$. Now we can take any section $r$ of this bundle, and def\/ine $\tilde s = r\circ s$. As $r$ takes values in a~cartesian product, we obtain $\tilde s = (s_1, s_2)$ where $s_1$ is a section of $\pi$ and $s_2$ is a function with values in~$V$. Finally we obtain
\begin{gather}\label{covariant_associated}
\nabla s = {\rm d}s_2|_{T\mathcal F} - \phi'(s_1^*(\Theta_{\mathcal F'}))(s_2).
\end{gather}
A calculation shows that it does not depend of the choice of $r$ and it is the covariant derivative operator associated to~$\mathcal F_V$. In particular, if $s_2$ is already an horizontal frame, then the covariant dif\/ferential is given by the f\/irst term ${\rm d}s_s|_{T\mathcal F}$.

\section{Deligne's realization of Lie algebra}\label{apB}

The proof of the existence of a regular parallelism for any complex Lie algebra $\mathfrak g$ is written in a~set of two letters from P.~Deligne to B.~Malgrange (dated from November of~2005 and February of~2010 respectively) that are published verbatim in~\cite{Malgrange}. We reproduce here the proof with some extra details.

\begin{Theorem}[Deligne]\label{TDeligne}Given any complex Lie algebra $\mathfrak g$ there exist an algebraic variety endowed with a regular parallelism of type~$\mathfrak g$.
\end{Theorem}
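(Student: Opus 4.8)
The plan is to realize $\mathfrak g$ as a complemented subalgebra of the Lie algebra of an affine algebraic group and then remove the complement by a quotient, via the construction of Example~\ref{ex:quotient_c}. First I would dispose of the case in which $\mathfrak g$ is itself the Lie algebra of an algebraic group $G$: there Example~\ref{ex:AGP} already exhibits a regular parallelism on $G$. In general, by Ado's theorem I fix a faithful finite dimensional representation and regard $\mathfrak g$ as a Lie subalgebra of $\mathfrak{gl}_N(\mathbf C)=\mathfrak{lie}({\rm GL}_N(\mathbf C))$. Let $\mathfrak a$ be the algebraic hull of $\mathfrak g$, i.e.\ the smallest algebraic Lie subalgebra of $\mathfrak{gl}_N(\mathbf C)$ containing $\mathfrak g$, and let $A\subseteq{\rm GL}_N(\mathbf C)$ be the connected algebraic subgroup with $\mathfrak{lie}(A)=\mathfrak a$. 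The basic structural input is Chevalley's theorem on algebraic hulls, which is the content of Lemma~\ref{ap2_2} in Appendix~\ref{apB}: $[\mathfrak a,\mathfrak a]\subseteq\mathfrak g$. In particular $\mathfrak g$ is an ideal of $\mathfrak a$ and $\mathfrak a/\mathfrak g$ is abelian.

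Next I would produce the complement, and this is the step I expect to be the main obstacle, since it is exactly where the fine structure theory of algebraic hulls is needed rather than soft arguments: one shows that $\mathfrak a$ admits an \emph{algebraic torus} $T\subseteq A$ with $\mathfrak a=\mathfrak g\oplus\mathfrak{lie}(T)$. The idea is that the algebraic hull is generated by $\mathfrak g$ together with the spaces of replicas of the semisimple parts of its elements, that those replica spaces are Lie algebras of subtori of $A$, and that $\mathfrak a/\mathfrak g$ is abelian; combining these facts one extracts a subtorus complementing $\mathfrak g$. (This is also what forces the Galois group below to be a torus, and note that a genuine torus complement is unavoidable: an abelian quotient $\mathfrak a/\mathfrak g$ need not split off even as a Lie subalgebra, so one really needs a toral, hence abelian and algebraic, lift.)

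With such a $T$ fixed, set $M=T\backslash A$ and let $\pi\colon A\to M$ be the quotient map. Since $\mathfrak g$ is an ideal of $\mathfrak a$ complemented by $\mathfrak{lie}(T)$, the left invariant vector fields in $\mathfrak g$ are $\pi$-projectable and $\pi_*|_{\mathfrak g}\colon\mathfrak g\to\mathfrak X[M]$ is an injective Lie algebra morphism; by Example~\ref{ex:quotient_c} its image spans the tangent space at a coset $Tg$ precisely when $\operatorname{Adj}_g(\mathfrak{lie}(T))\cap\mathfrak g=\{0\}$. But $\operatorname{Adj}_g$ is a Lie algebra automorphism of $\mathfrak a$ preserving the ideal $\mathfrak g$, so from $\mathfrak a=\mathfrak g\oplus\mathfrak{lie}(T)$ one gets $\mathfrak a=\operatorname{Adj}_g(\mathfrak g)\oplus\operatorname{Adj}_g(\mathfrak{lie}(T))=\mathfrak g\oplus\operatorname{Adj}_g(\mathfrak{lie}(T))$, and the required condition holds for \emph{every} $g\in A$. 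Hence $\pi_*|_{\mathfrak g}$ is a regular parallelism of type $\mathfrak g$ on all of $M$, and $M=T\backslash A$ is a smooth connected algebraic variety --- in fact affine, since $A$ is affine and $T$ is reductive. This proves the theorem; moreover Proposition~\ref{prop_example} applies to this very situation and identifies $\mathfrak{gal}(\nabla^{\rm rec})$ with $\overline{\operatorname{Adj}}(T)$, a quotient of the torus $T$ and hence an algebraic torus, as announced in the introduction.
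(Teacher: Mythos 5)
Your construction is correct in outline but it is genuinely different from the paper's. The paper never forms a quotient $T\backslash A$: after the Levi decomposition $E\simeq (T\times L')\ltimes U$ of the hull (your $A$), it replaces the central torus $T$ of the Levi factor by its Lie algebra $\mathfrak t$, producing the analytic but non-algebraic group $\tilde E=(\mathfrak t\times L')\ltimes U$, whose left invariant vector fields are nevertheless regular (Lemma~\ref{ap2_1}); the parallelized variety is then the fibre $\tilde G=\pi_1^{-1}(\mathfrak g^{ab})$ of an algebraic morphism $\pi_1\colon \tilde E\to \mathfrak e^{ab}\simeq \mathfrak t\times\bar{\mathfrak u}$ over the linear subspace $\mathfrak g^{ab}$ (Lemmas~\ref{ap2_2} and~\ref{ap2_3}). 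Since the preimage of an \emph{arbitrary} linear subspace under an algebraic map is an algebraic subvariety, no rationality question ever arises: this ``unrolling'' of the torus is exactly how the paper sidesteps the step you single out as the main obstacle.

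That step, the existence of an algebraic torus $T'\subseteq A$ with $\mathfrak a=\mathfrak g\oplus\mathfrak{lie}(T')$, is true, but your replica sketch does not yet deliver it: the replica subalgebras are Lie algebras of various subtori that need not lie in a common torus, and a subgroup generated by tori is not in general a torus. To complete it you need essentially the same structure the paper isolates. From $[\mathfrak a,\mathfrak a]=[\mathfrak g,\mathfrak g]$ one gets $\mathfrak g=\pi^{-1}(\mathfrak g^{ab})$ for $\pi\colon\mathfrak a\to\mathfrak a^{ab}\simeq\mathfrak t\times\bar{\mathfrak u}$; every torus of $A$ has Lie algebra projecting into the factor $\mathfrak t$, so a toral complement can only exist if $\mathfrak g+\mathfrak t=\mathfrak a$, and this is forced by minimality of the hull (otherwise the preimage of $\mathfrak t\oplus V$, with $V$ the projection of $\mathfrak g^{ab}$ to $\bar{\mathfrak u}$, would be a proper algebraic subalgebra containing $\mathfrak g$, since any linear subspace of the vector group factor is an algebraic subgroup). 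It then suffices to choose inside $\mathfrak t$ a complement to $\mathfrak g\cap\mathfrak t$ that is rational for the cocharacter lattice of $T$; such a complement exists because rational subspaces are Zariski dense in the Grassmannian. Granting this, the rest of your argument is sound: $\mathfrak g$ is an ideal by Lemma~\ref{ap2_2}, so $\operatorname{Adj}_g$ fixes it and the regularity condition of Example~\ref{ex:quotient_c} holds at every coset. Your route has the benefit of landing directly in the setting of Proposition~\ref{prop_example}, which identifies the Galois group of $\nabla^{\rm rec}$ as a quotient of $T'$ and recovers the paper's observation that these Galois groups are tori; the paper's route buys uniformity, avoiding the rational-complement argument altogether.
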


\begin{Lemma}\label{ap2_1}Let $T$ be an algebraic torus acting regularly by automophisms in some algebraic group $H$ and let $\mathfrak t$ be the Lie algebra of $T$. Let us consider the semidirect product
\begin{gather*}\mathfrak t \ltimes H, \qquad (t,h)(t',h') = (t+t', (\exp(t')\cdot h)h')\end{gather*}
as an algebraic variety and analytic Lie group. Its left invariant vector fields form a regular parallelism of $\mathfrak t \ltimes H$. The Galois group of this parallelism is a~torus.
\end{Lemma}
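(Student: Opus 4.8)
The plan is to compute explicitly the left- and right-invariant vector fields of the analytic group $L:=\mathfrak t\ltimes H$, realized on the algebraic variety $\mathfrak t\times H$, and to extract from the right-invariant ones the Picard--Vessiot extension of $\nabla^{\rm rec}$. Throughout I write $\mathfrak h=\mathfrak{lie}(H)$, so that $\mathfrak{lie}(L)=\mathfrak t\ltimes\mathfrak h$, the semidirect product for the action of $\mathfrak t$ on $\mathfrak h$ obtained by differentiating the $T$-action.

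First I would check that the left-invariant vector fields are regular. Differentiating the group law at $(0,e)$, one finds that the left-invariant field with value $(A,0)$, $A\in\mathfrak t$, is
\begin{gather*}
(t_0,h_0)\longmapsto \big(A,\;D_A(h_0)\big),
\end{gather*}
where $D_A\in\mathfrak X[H]$ is the infinitesimal generator of the one-parameter group $s\mapsto\exp(sA)$ acting on $H$; and the one with value $(0,B)$, $B\in\mathfrak h$, is $(t_0,h_0)\mapsto(0,B^L(h_0))$, with $B^L$ left-invariant on $H$. The point is that neither depends on $t_0$, and that $D_A$ and $B^L$ are regular on $H$ because the $T$-action is regular; hence all these fields are regular on $\mathfrak t\times H$. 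As they restrict to a basis of the tangent space at $(0,e)$ and are left-invariant, they restrict to a basis everywhere, so $\rho\colon\mathfrak{lie}(L)\to\mathfrak X[\mathfrak t\times H]$ is a regular parallelism of type $\mathfrak{lie}(L)$.

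Next I would turn to $\nabla^{\rm rec}$. By Lemma~\ref{Lemma1} its horizontal fields are the symmetries of this parallelism, and a right-invariant field of $L$ commutes with every left-invariant one; by Lemma~\ref{Lemma3} the horizontal germs at a regular point form a space of dimension $\dim L$, so the right-invariant fields are a fundamental system. Differentiating right translations, the right-invariant field of $A\in\mathfrak t$ is the constant field $(t_0,h_0)\mapsto(A,0)$, while that of $B\in\mathfrak h$ is $(t_0,h_0)\mapsto(0,\widetilde C(h_0))$ with $C=\tau(t_0)B$, where $\tau(t_0)\in\operatorname{Aut}(\mathfrak h)$ is the differential at $e$ of the automorphism $\exp(t_0)$ of $H$ and $\widetilde C$ is the right-invariant field of $H$ with value $C$ at $e$. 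Rewritten in the left-invariant frame above, their coefficients lie in $\mathbf C[H]$ together with the entries of $\tau(t)$. Since $T$ is a torus, writing $\tau=\bar\tau\circ\exp$ with $\bar\tau\colon T\to\operatorname{GL}(\mathfrak h)$ a rational representation, $\mathfrak h$ splits into weight spaces and in an adapted basis $\tau(t)$ is diagonal with entries $e^{\langle d\chi,\,t\rangle}$, $\chi$ the weights of $\mathfrak h$. Letting $\Lambda\subseteq\mathfrak t^{*}$ be the free abelian group generated by the $d\chi$ for the nonzero weights and $\psi_1,\dots,\psi_d$ a $\mathbf Z$-basis of $\Lambda$, the coefficients of a fundamental system of $\nabla^{\rm rec}$-horizontal fields generate over $\mathbf C(\mathfrak t\times H)$ exactly the extension $\mathbf C(\mathfrak t\times H)\big(e^{\psi_1(t)},\dots,e^{\psi_d(t)}\big)$; as the $\psi_i$ are $\mathbf C$-independent linear forms this adjoins algebraically independent elements and no new constants, so by the characterization of the Picard--Vessiot extension in Section~\ref{A6} it is the Picard--Vessiot extension of $\nabla^{\rm rec}$. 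A differential automorphism scales each $e^{\psi_i(t)}$ by a constant, and no nontrivial monomial $e^{\sum_i m_i\psi_i(t)}$ is rational, so all scalings occur; hence $\operatorname{Gal}(\nabla^{\rm rec})\cong\operatorname{Hom}(\Lambda,\mathbf C^{*})\cong(\mathbf C^{*})^{d}$, an algebraic torus.

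The routine part is the Lie-group bookkeeping that produces the invariant fields. The hard part is the middle step: showing that the transcendence of the symmetries is governed precisely by the torus characters — that the $\mathfrak t$-direction decouples and the weight-space decomposition accounts for all, and only, the exponentials that appear — and then that these exponentials of independent linear forms are algebraically independent over the base, which is what forces the Galois group to be the full torus $(\mathbf C^{*})^{d}$ rather than a proper subgroup.
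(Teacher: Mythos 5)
Your proposal is correct and follows essentially the same route as the paper: compute the left-invariant fields from the group law and observe they are regular because the $T$-action is, then compute the right-invariant fields and see that their coefficients in the left-invariant frame involve only $\mathbf C[H]$ and the entries of $\operatorname{Adj}$-type matrices $\tau(t)=d(\alpha_{\exp(t)})_e$, i.e., exponentials of linear forms on $\mathfrak t$. The only difference is that where the paper stops at ``a standard argument of differential Galois theory,'' you carry that argument out, identifying the Picard--Vessiot extension via the weight-space decomposition of $\mathfrak h$ and computing the Galois group as $\operatorname{Hom}(\Lambda,\mathbf C^{*})$ --- a welcome expansion rather than a divergence.
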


\begin{proof} Let us denote by $\alpha$ the action of $T$ in $H$ and $\alpha'\colon \mathfrak t \mapsto \mathfrak X[H]$ the Lie algebra isomorphism given by the inf\/initesimal generators
\begin{gather*}(\alpha'X)_h= \left.\frac{{\rm d}}{{\rm d}\varepsilon}\right|_{\varepsilon=0} \alpha_{\exp(\varepsilon t)}(h).\end{gather*}
Let $X$ be an invariant vector f\/ield in~$\mathfrak t$. Let us compute the left invariant vector f\/ield in $t\ltimes H$ whose value
at the identity is $(X_0,0)$. In order to perform the computation we write the vector as an inf\/initesimally near point to $(0,e)$.
\begin{gather*}L_{(t,h)}(0 + \varepsilon X_0, e) = (t + \varepsilon X_t, \alpha_{\exp(\varepsilon X)}(h) ) = (t + \varepsilon X_t, h + \varepsilon (\alpha'X)_h).\end{gather*}
And thus $dL_{(t,h)}(X_0,0) = (X_t, (\alpha'X)_h)$. We conclude that $(X,\alpha'X)\in\mathfrak X[\mathfrak t\ltimes H]$ is the left invariant vector f\/ield whose value at $(0,e)$ is $(X_0,0)$. Let us consider now $Y$ a left invariant vector f\/ield in $H$. Let us compute, as before, the left invariant vector f\/ield whose value at $(t,h)$ is~$(0,Y_h)$
\begin{gather*}L_{(t,h)}(0,e+\varepsilon Y_e) = (t, L_h(e + \varepsilon Y_e)) = (t, h + \varepsilon Y_h).\end{gather*}
And thus $(0,Y)$ is the left invariant vector f\/ield whose value at $(0,e)$ is $(0,Y_e)$. These vector f\/ields of the form $(X,\alpha'X)$ and~$(0,Y)$ are regular and span the Lie algebra of left invariant vector f\/ields in $\mathfrak t\ltimes H$. Hence, they form a regular parallelism.

In order to compute the Galois group of the parallelism, let us compute its reciprocal parallelism. It is formed by the right invariant vector f\/ields in the analytic Lie group $\mathfrak t \ltimes H$. A similar computation proves that if $X$ is an invariant vector f\/ield in~$\mathfrak t$ then $(X,0)$ is right invariant in $\mathfrak t \ltimes H$. For each element $\tau\in T$, $\alpha_\tau$ is a group automorphism of~$H$. Thus, it induces a derived automorphism $\alpha_{\tau*}$ of the Lie algebra of regular vector f\/ields in~$H$. Let $Y$ be now a right invariant vector f\/ield in $H$. Let us compute the right invariant vector f\/ield $Z$ in $\mathfrak t\ltimes H$ whose value at~$(0,e)$ is $(0,Y_e)$:
\begin{gather*}R_{(t,h)}(0,e+ \varepsilon Y_e) = (t,\alpha_{\exp(t)}(e+ \varepsilon Y_e)h) = (t, h + \varepsilon (\alpha_{\exp(t)*}Y)_h)\end{gather*}
and $Z_{t,h} = (0, (\alpha_{\exp(t)*} Y)_h)$. Those analytic vector f\/ields depend on the exponential function in a torus thus we can conclude,
by a standard argument of dif\/ferential Galois theory, that the associated dif\/ferential Galois group is a torus.
\end{proof}

Let us consider $\mathfrak g$ an arbitrary, non algebraic, f\/inite-dimensional complex Lie algebra. We consider an embedding of $\mathfrak g$ in the Lie algebra of general linear group and $E$ the smallest algebraic subgroup whose Lie algebra $\mathfrak e$ contains~$\mathfrak g$. $E$ is a~connected linear algebraic group.

\begin{Lemma}[{also in \cite[Proposition~1]{Chevalley}}]\label{ap2_2}
With the above definitions and notation $[\mathfrak e, \mathfrak e] = [\mathfrak g, \mathfrak g]$.
\end{Lemma}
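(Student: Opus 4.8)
\emph{The plan.} The statement is classical (it is essentially the Chevalley proposition cited), and the cleanest route I see splits it into an algebraicity statement about $[\mathfrak g,\mathfrak g]$ and a short formal deduction. Throughout, let $G_0\subseteq\mathrm{GL}(V)$ be the connected analytic subgroup with $\mathrm{Lie}(G_0)=\mathfrak g$, so that $E=\overline{G_0}$ (Zariski closure) and $\mathfrak e=\mathrm{Lie}(E)$. The inclusion $[\mathfrak g,\mathfrak g]\subseteq[\mathfrak e,\mathfrak e]$ is trivial from $\mathfrak g\subseteq\mathfrak e$, so everything is in the reverse inclusion.

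\emph{Step 1: $[\mathfrak g,\mathfrak g]$ is already an algebraic Lie subalgebra of $\mathfrak{gl}(V)$.} Fix a Levi decomposition $\mathfrak g=\mathfrak s\oplus\mathfrak r$ with $\mathfrak s$ semisimple and $\mathfrak r=\mathrm{rad}(\mathfrak g)$. Since $\mathfrak s$ is perfect,
\[
[\mathfrak g,\mathfrak g]=\mathfrak s\oplus\mathfrak m,\qquad \mathfrak m:=[\mathfrak s,\mathfrak r]+[\mathfrak r,\mathfrak r]=[\mathfrak g,\mathfrak r],
\]
an ideal of $\mathfrak g$ contained in $\mathfrak r$. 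The ideal $\mathfrak m$ annihilates every irreducible subquotient of the $\mathfrak g$-module $V$ — on such a subquotient $\mathfrak r$ acts through a scalar character, which vanishes on $[\mathfrak g,\mathfrak r]$ — hence $\mathfrak m$ consists of nilpotent endomorphisms of $V$. By Engel it is a nilpotent Lie algebra of nilpotent operators, so the connected subgroup $M\subseteq\mathrm{GL}(V)$ it generates is unipotent, hence Zariski closed; likewise the connected subgroup $S$ generated by $\mathfrak s$ is closed, every finite-dimensional representation of a semisimple Lie algebra being algebraic. As $\mathfrak s$ normalizes $\mathfrak m$, the group $S$ normalizes $M$ and $SM$ is a subgroup of $\mathrm{GL}(V)$; being the image of the morphism $S\times M\to\mathrm{GL}(V)$ it is constructible, and a constructible subgroup of an algebraic group is closed. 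Thus the connected subgroup with Lie algebra $\mathfrak s\oplus\mathfrak m=[\mathfrak g,\mathfrak g]$ is Zariski closed.

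\emph{Step 2: deduction.} The derived subgroup $D:=(G_0,G_0)$ is the connected analytic subgroup with Lie algebra $[\mathfrak g,\mathfrak g]$, hence Zariski closed by Step 1. It is normalized by $G_0$, and since the normalizer of a closed subgroup is closed it is normalized by $\overline{G_0}=E$; so $D$ is a normal subgroup of $E$ and $E/D$ is an algebraic group. The image of $G_0$ in $E/D$ is dense (because $G_0$ is dense in $E$) and abelian (because $(G_0,G_0)=D$ becomes trivial), so $E/D$ is abelian, its commutator morphism vanishing on a dense subset. Consequently its Lie algebra $\mathfrak e/[\mathfrak g,\mathfrak g]$ is abelian, i.e.\ $[\mathfrak e,\mathfrak e]\subseteq[\mathfrak g,\mathfrak g]$, and with the trivial reverse inclusion the lemma follows.

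\emph{Main obstacle.} Step 2 is bookkeeping; the content is Step 1, and inside it the one input that is not pure algebraic-group formalism is that the ideal $[\mathfrak g,\mathrm{rad}\,\mathfrak g]$ acts by nilpotent operators in every representation. One could instead simply invoke Chevalley's theorem that the derived group of any connected linear Lie group is algebraic and go straight to Step 2, but the Levi-decomposition argument keeps the proof self-contained.
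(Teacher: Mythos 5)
Your proof is correct, but it takes a genuinely different route from the paper's. The paper argues by a transporter trick that uses nothing beyond the minimality of $E$: the set $H$ of matrices stabilizing $\mathfrak g$ under the adjoint action and acting trivially on $\mathfrak g/[\mathfrak g,\mathfrak g]$ is an algebraic group whose Lie algebra contains $\mathfrak g$, so $E\subseteq H$ and $\mathfrak e\subseteq\mathfrak h$, whence $[\mathfrak e,\mathfrak g]\subseteq[\mathfrak g,\mathfrak g]$; a second transporter group $H_1$ (stabilizing $\mathfrak e$ and $\mathfrak g$ and acting trivially on $\mathfrak e/[\mathfrak g,\mathfrak g]$) then upgrades this to $[\mathfrak e,\mathfrak e]\subseteq[\mathfrak g,\mathfrak g]$. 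No Levi decomposition, no representation theory of the radical, and no closedness of derived groups is needed --- only that such stabilizers are Zariski closed and that $E$ is the smallest algebraic group with $\mathfrak g\subseteq\mathfrak{lie}(E)$. Your argument instead establishes the stronger intermediate statement that $[\mathfrak g,\mathfrak g]$ is an algebraic Lie subalgebra (Chevalley's theorem that the derived group of a connected linear Lie group is Zariski closed), and then deduces the lemma by a density argument on $E/D$. This buys you an extra fact that the paper only obtains a posteriori (the algebraicity of $[\mathfrak g,\mathfrak g]$, which follows from the lemma because $[\mathfrak e,\mathfrak e]$ is the Lie algebra of the closed derived group of the algebraic group $E$), at the cost of invoking Levi decomposition, Lie's and Engel's theorems, and the algebraicity of semisimple and unipotent subgroups. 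Both proofs are valid; the paper's is shorter and more self-contained.
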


\begin{proof}Let $H$ be the group of matrices that stabilizes $\mathfrak g$ and acts trivially on $\mathfrak g/[\mathfrak g, \mathfrak g]$. Its Lie algebra $\mathfrak h$ contains $\mathfrak g$ and thus $H\supseteq E$ and $\mathfrak h \supseteq \mathfrak e$. By def\/inition of $H$ we have $[\mathfrak h, \mathfrak g] = [\mathfrak g, \mathfrak g]$, therefore $[\mathfrak e, \mathfrak g]\subseteq [\mathfrak g,\mathfrak g]$. Let us now consider the group $H_1$ that stabilizes $\mathfrak e$ and $\mathfrak g$ and that acts trivially in $\mathfrak e/[\mathfrak g, \mathfrak g]$. This is again an algebraic group containing~$E$, and its Lie algebra $\mathfrak h_1$ satisf\/ies $[\mathfrak h_1,\mathfrak e] \subseteq [\mathfrak g, \mathfrak g]$. Taking into account $\mathfrak e \subseteq \mathfrak h_1$ we have $[\mathfrak e, \mathfrak e] \subseteq [\mathfrak g, \mathfrak g]$. The other inclusion is trivial.
\end{proof}

Because of Lemma \ref{ap2_2}, the abelianized Lie algebra $\mathfrak g^{ab} = \mathfrak g / [\mathfrak g, \mathfrak g]$ is a~subspace of $\mathfrak e^{ab} = \mathfrak e/ [\mathfrak e, \mathfrak e]$. Moreover, if we consider the quotient map, $\pi\colon \mathfrak e \to \mathfrak e^{ab}$, then $\mathfrak g = \pi^{-1}(\mathfrak g^{ab})$.

Let us consider an algebraic Levy decomposition $E\simeq L \ltimes U$ (see \cite[Chapter~6]{Onishchik}). Here, $L$ is reductive and $U$ is the unipotent radical, consisting in all the unipotent elements of~$E$. The semidirect product structure is produced by an action of~$L$ in~$U$, so that, $(l_1,u_1)(l_2,u_2) = (l_1l_2, (l_2\cdot u_1)u_2)$.

Since $L$ is reductive, its commutator subgroup $L'$ is semisimple. Let $T$ be the center of $L$, which is a torus, the map
\begin{gather*}\varphi\colon \ T\times L' \to L, \qquad (t,l) \mapsto tl,\end{gather*}
is an isogeny. The isogeny def\/ines an action of $T\times L'$ in $U$ by $(t,l)\cdot u = tl\cdot u$. We have found an isogeny
\begin{gather*}(T\times L')\ltimes U \to E.\end{gather*}

The Lie algebra $\mathfrak u$ of $U$ is a nilpotent Lie algebra, so that the exponential map $\exp\colon \mathfrak u \to U$ is regular and bijective. In general, if $V$ is an abelian quotient of $U$ with Lie algebra $\mathfrak v$ then the exponential map conjugates the addition law in $\mathfrak v$ with the group law in~$V$.

\begin{Lemma}\label{ap2_3} With the above definitions and notation, let $\bar{\mathfrak u}$ be the biggest quotient of $\mathfrak u^{ab}$ in which $L$ acts by the identity. We have a Lie algebra isomorphism $\mathfrak e^{ab} \simeq \mathfrak t \times \bar{\mathfrak u}$.
\end{Lemma}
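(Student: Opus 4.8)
The plan is to compute $[\mathfrak e, \mathfrak e]$ from the Levi decomposition and then simply read off the abelianization. Write $\mathfrak e = \mathfrak l \oplus \mathfrak u$ as a vector space, where $\mathfrak l = \operatorname{Lie}(L)$ is a reductive subalgebra and $\mathfrak u = \operatorname{Lie}(U)$ is the nilpotent ideal; this is the infinitesimal form of $E \simeq L \ltimes U$, and $\mathfrak l \cap \mathfrak u = \{0\}$ because the decomposition is semidirect. Since $\mathfrak u$ is an ideal, both $[\mathfrak l, \mathfrak u]$ and $[\mathfrak u, \mathfrak u]$ lie in $\mathfrak u$, whereas $[\mathfrak l, \mathfrak l] \subseteq \mathfrak l$; hence
\begin{gather*}[\mathfrak e, \mathfrak e] = [\mathfrak l, \mathfrak l] \oplus \big([\mathfrak l, \mathfrak u] + [\mathfrak u, \mathfrak u]\big),\end{gather*}
and therefore, as abelian Lie algebras (equivalently, as direct products),
\begin{gather*}\mathfrak e^{ab} = \mathfrak e / [\mathfrak e, \mathfrak e] \cong \big(\mathfrak l / [\mathfrak l, \mathfrak l]\big) \times \big(\mathfrak u / ([\mathfrak l, \mathfrak u] + [\mathfrak u, \mathfrak u])\big).\end{gather*}

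It then remains to identify the two factors. For the first, reductivity of $\mathfrak l$ gives $\mathfrak l = \mathfrak t \oplus [\mathfrak l, \mathfrak l]$ with $\mathfrak t = \operatorname{Lie}(T)$ the centre and $[\mathfrak l, \mathfrak l]$ semisimple, hence equal to its own derived algebra; so $\mathfrak l / [\mathfrak l, \mathfrak l] \cong \mathfrak t$. For the second, I would first quotient by $[\mathfrak u, \mathfrak u]$ to obtain $\mathfrak u^{ab}$, on which $\mathfrak l$ acts by the restriction of the adjoint action of $\mathfrak e$ — this is the differential of the $L$-action on $U$ (hence on $U^{ab}$) that defines the semidirect product. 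The image of $[\mathfrak l, \mathfrak u]$ in $\mathfrak u^{ab}$ is exactly $\mathfrak l \cdot \mathfrak u^{ab}$, so $\mathfrak u / ([\mathfrak l, \mathfrak u] + [\mathfrak u, \mathfrak u]) \cong \mathfrak u^{ab}/(\mathfrak l \cdot \mathfrak u^{ab})$, which is the largest quotient of $\mathfrak u^{ab}$ on which $\mathfrak l$ acts trivially. As $L$ is connected (being a Levi factor of the connected group $E$), an $L$-action on a quotient of $\mathfrak u^{ab}$ is trivial if and only if the corresponding $\mathfrak l$-action is, so this quotient is precisely $\bar{\mathfrak u}$.

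Combining the two identifications yields $\mathfrak e^{ab} \cong \mathfrak t \times \bar{\mathfrak u}$, which is the assertion. The computation is elementary; the two points to be careful with are the structure fact $\mathfrak l/[\mathfrak l,\mathfrak l] \cong \mathfrak t$ for reductive $\mathfrak l$ and — what I expect to be the main, though still routine, obstacle — the verification that the $L$-action on $U$ entering $E = L \ltimes U$ differentiates to the restriction to $\mathfrak l$ of the adjoint action on $\mathfrak u$; this is immediate from the semidirect-product bracket, but it is what makes ``biggest quotient with trivial $L$-action'' coincide with the Lie-algebra coinvariants $\mathfrak u^{ab}/(\mathfrak l\cdot\mathfrak u^{ab})$.
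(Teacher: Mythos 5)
Your proof is correct and follows essentially the same route as the paper: compute $[\mathfrak e,\mathfrak e]$ from the Levi/semidirect decomposition and read off the abelianization, identifying the $\mathfrak l$-part of the quotient with $\mathfrak t$ via reductivity and the $\mathfrak u$-part with $\bar{\mathfrak u}$ via the fact that the differentiated $L$-action on $\mathfrak u$ is the adjoint action. The only cosmetic difference is that the paper works directly with the isogenous model $(\mathfrak t\times\mathfrak l')\ltimes\mathfrak u$ and writes the bracket in coordinates, while you split $\mathfrak l=\mathfrak t\oplus[\mathfrak l,\mathfrak l]$ afterwards and make explicit the (correct) connectedness argument relating trivial $L$-action to trivial $\mathfrak l$-action.
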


\begin{proof} Let us compute $\mathfrak e^{ab}$. We compute the commutators $\mathfrak e$ by means of the isomorphism $\mathfrak e \simeq (\mathfrak t \times \mathfrak l') \ltimes \mathfrak u$. We obtain
\begin{gather*}[(t_1,l_1,u_1),(t_2,l_2,u_2)] = (0,[l_1,l_2], a(t_2,l_2)u_1 +[u_1,u_2]),\end{gather*}
where $a$ represents the derivative at $(e,e)$ of the action of $L$ in $U$. From this we obtain that $[\mathfrak e, \mathfrak e]$
is spanned by $(\{0\}\times \mathfrak l')\ltimes \mathfrak \{0\}$, $\{0\}\ltimes [\mathfrak u, \mathfrak u]$ and $\{0\}\times \langle a(\mathfrak l)\mathfrak u \rangle$. Taking into account that $\bar{\mathfrak u}/ ( \langle a(\mathfrak l)\mathfrak u \rangle + [\mathfrak u, \mathfrak u] )$ is the biggest quotient of $\mathfrak u^{ab}$ in which $L$ acts trivially, we obtain the result of the lemma.
\end{proof}

Let $\mathfrak t$ be the Lie algebra of $T$. Its exponential map is an analytic group morphism and thus we may consider the analytic action of $\mathfrak t \times L'$ in $U$ given by $(t,l)\cdot u = (\exp(t) l )\cdot u$. Let $\tilde E$ be the algebraic variety and analytic Lie group $(\mathfrak t \times L') \ltimes U$. By application of Lemma~\ref{ap2_1}, and taking into account that $\tilde E \simeq \mathfrak t \ltimes H$, where $H$ is the group $L'\cdot U$, we have that the left invariant vector f\/ields in $\tilde E$ are regular. Let us consider the projection
\begin{gather*}\pi_1\colon \ \tilde E \to \mathfrak e^{ab} = \mathfrak t \times \bar{\mathfrak u}, \qquad (t,l,u) \mapsto (t,[\log(u)]),\end{gather*}
this projection is algebraic by construction, and also a morphism of Lie groups. By Lemmas~\ref{ap2_2} and~\ref{ap2_3}, $\mathfrak g^{ab}$ is a vector subspace of the image. Then, let us take $\tilde G$ the f\/iber $\pi^{-1}_1(\mathfrak g^{ab})$. It is an algebraic submanifold of $\tilde E$ and an analytic Lie group. The derivative at the identity of $\pi_1$ is precisely the abelianization $\pi$ and it follows that the Lie algebra of $\tilde G$ is precisely~$\mathfrak g$. Finally~$\tilde G$ is an algebraic variety with a~regular $\mathfrak g$-parallelism. This f\/inishes the proof of Theorem~\ref{TDeligne}.

\begin{Remark}The right invariant vector f\/ields in~$\tilde G$ are constructed as in Lemma~\ref{ap2_1} by means of the exponential function in the torus. Hence, Galois groups of the parallelisms obtained via this construction are always tori.
\end{Remark}

\subsection*{Acknowledgements}

The authors thank the ECOS-Nord program C12M01 and the project ``IsoGalois'' ANR-13-JS01-0002-01. They also thank the ``Universidad Nacional de Colombia''(project HERMES code 37243) and the ``Universit\'e de Rennes 1'' (Actions Internationales 2016) for supporting this reseach, and also the Centre Henri Lebesgue ANR-11-LABX-0020-01 for creating an attractive mathematical environment.

The authors thank Juan Diego V\'elez for his help with the f\/inal redaction of the manuscript and the anonymous referees who gave relevant contributions to improve the paper.

\pdfbookmark[1]{References}{ref}
\LastPageEnding


\begin{thebibliography}{99}
\footnotesize\itemsep=0pt

\bibitem{beilinson-drinfeld}
Beilinson A., Drinfeld V., Chiral algebras, \href{https://doi.org/10.1090/coll/051}{\textit{American Mathematical
 Society Colloquium Publications}}, Vol.~51, Amer. Math. Soc., Providence, RI,
 2004.

\bibitem{Bonnet}
Bonnet P., Minimal invariant varieties and f\/irst integrals for algebraic
 foliations, \href{https://doi.org/10.1007/s00574-006-0001-6}{\textit{Bull. Braz. Math. Soc.~(N.S.)}} \textbf{37} (2006), 1--17,
 \href{https://arxiv.org/abs/math.AG/0602274}{math.AG/0602274}.

\bibitem{cartan}
Cartan E., Les sous-groupes des groupes continus de transformations,
 \href{https://doi.org/10.24033/asens.588}{\textit{Ann. Sci. \'Ecole Norm. Sup.~(3)}} \textbf{25} (1908), 57--194.

\bibitem{Chevalley}
Chevalley C., Th\'eorie des groupes de {L}ie, {T}ome~{III}, {T}h\'eor\`emes
 g\'en\'eraux sur les alg\`ebres de {L}ie, Actualit\'es Sci. Ind. no.~1226,
 Hermann \& Cie, Paris, 1955.

\bibitem{guillemin-sternberg}
Guillemin V., Sternberg S., Deformation theory of pseudogroup structures,
 \href{https://doi.org/10.1090/memo/0064}{\textit{Mem. Amer. Math. Soc.}} \textbf{64} (1966), 80~pages.

\bibitem{Kolchin}
Kolchin E.R., Dif\/ferential algebra and algebraic groups, \textit{Pure and
 Applied Mathematics}, Vol.~54, Academic Press, New York~-- London, 1973.

\bibitem{Malgrange}
Malgrange B., Pseudogroupes de {L}ie et th\'eorie de {G}alois dif\/f\'erentielle,
 {P}reprint IHES/M/10/11, {I}nstitut des Hautes \'Etudes Scientif\/iques, 2010.

\bibitem{malgrange-P}
Malgrange B., Leon {E}hrenpreis: some old souvenirs, in The Mathematical Legacy
 of {L}eon {E}hrenpreis, \href{https://doi.org/10.1007/978-88-470-1947-8_1}{\textit{Springer Proc. Math.}}, Vol.~16, Springer,
 Milan, 2012, 3--6.

\bibitem{Onishchik}
Onishchik A.L., Vinberg E.B., Lie groups and algebraic groups, \href{https://doi.org/10.1007/978-3-642-74334-4}{\textit{Springer Series
 in Soviet Mathematics}}, Springer-Verlag, Berlin, 1990.

\bibitem{sharpe}
Sharpe R.W., Dif\/ferential geometry. Cartan's generalization of Klein's Erlangen
 program, \textit{Graduate Texts in Mathematics}, Vol.~166, Springer-Verlag,
 New York, 1997.

\bibitem{SingerVanderput}
van~der Put M., Singer M.F., Galois theory of linear dif\/ferential equations,
 \href{https://doi.org/10.1007/978-3-642-55750-7}{\textit{Grundlehren der Mathematischen Wissenschaften}}, Vol.~328,
 Springer-Verlag, Berlin, 2003.

\bibitem{Wang}
Wang H.C., Complex parallisable manifolds, \href{https://doi.org/10.2307/2031863}{\textit{Proc. Amer. Math. Soc.}}
 \textbf{5} (1954), 771--776.

\bibitem{Winkelmann1}
Winkelmann J., On manifolds with trivial logarithmic tangent bundle,
 \textit{Osaka~J. Math.} \textbf{41} (2004), 473--484.

\bibitem{Winkelmann2}
Winkelmann J., On manifolds with trivial logarithmic tangent bundle: the
 non-{K}\"ahler case, \href{https://doi.org/10.1007/s00031-008-9003-3}{\textit{Transform. Groups}} \textbf{13} (2008), 195--209.

\end{thebibliography}
\end{document}